 \newcommand{\ROM}[1]{\mathrm{\uppercase\expandafter{\romannumeral#1}}}
  \theoremstyle{definition}
   \numberwithin{equation}{section} \theoremstyle{plain}
 \newtheorem{thm}{Theorem}[section]
 \newtheorem{lem}{Lemma}[section]
 \newtheorem{defn}{Definition}[section]
 \newtheorem{cor}{Corollary}[section]
 \newtheorem{rem}{Remark}[section]
 \newtheorem{prop}{Proposition}[section]
 \newtheorem{assert}{Assertion}[subsection]
  \newtheorem{observ}{Observation}[subsection]
\newtheorem{ack}{Acknowledgements}   
  \numberwithin{equation}{section}
\title[Normal scalar curvature inequality ]{\textbf{Normal scalar curvature inequality on the focal submanifolds of isoparametric hypersurfaces}}
\author[J. Q. Ge]{Jianquan Ge}\address{School of Mathematical Sciences, Laboratory of Mathematics and Complex Systems, Beijing Normal
University, Beijing 100875, P. R. China}\email{jqge@bnu.edu.cn}
\author[Z. Z. Tang]{Zizhou Tang}\address{Chern Institute of Mathematics $\&$ LPMC, Nankai University, Tianjin 300071, P. R. China}
\email{zztang@nankai.edu.cn}
\thanks {$^{\dag}$ the corresponding author}
\author[W. J. Yan]{Wenjiao Yan$^{\dag}$}\address{School of Mathematical Sciences, Laboratory of Mathematics and Complex Systems, Beijing Normal University, Beijing 100875, P. R. China}
\email{wjyan@bnu.edu.cn}
\thanks {The project is partially supported by the NSFC (No.11331002, 11301027, 11522103, 11722101)}
\subjclass[2010]{ 53C12, 53C20, 53C40.}
\keywords{Isoparametric hypersurfaces, focal submanifolds, Condition A, parallel second fundamental form, Einstein condition.}
\begin{document}

\maketitle
\begin{center}
Dedicated to Professor Chiakuei Peng on the Occasion of His 75th Birthday
\end{center}
\begin{abstract}
An isoparametric hypersurface in unit spheres has two focal submanifolds.
Condition A plays a crucial role in the classification theory of isoparametric hypersurfaces in \cite{CCJ07}, \cite{Chi16} and \cite{Miy13}.
This paper determines $C_A$, the set of points with Condition A in focal submanifolds. It turns out that the points in
$C_A$ reach an upper bound of the normal scalar curvature $\rho^{\bot}$ (sharper than that in DDVV inequality \cite{GT08}, \cite{Lu11}).
We also determine the sets $C_P$ (points with parallel second fundamental form) and $C_E$ (points with Einstein condition), which achieve
two lower bounds of $\rho^{\bot}$.
\end{abstract}

\section{\textbf{Introduction}}
A smooth function $f: N\rightarrow \mathbb{R}$ defined on a Riemannian manifold $N$ is called \emph{isoparametric} (cf.\cite{Wan87},\cite{GT13},\cite{QT15}), if there exist a smooth function $b:\mathbb{R}\rightarrow\mathbb{R}$ and a continuous function $a:\mathbb{R}\rightarrow\mathbb{R}$ such that
\begin{equation}\label{iso1}
\left\{ \begin{array}{ll}
|\nabla f|^2=b(f),\\
~~\,\,\Delta f~~=a(f),
\end{array}\right.
\end{equation}
where $\nabla f$, $\Delta f$ are the gradient and Laplacian of $f$, respectively.
The regular level sets of $f$ are called isoparametric hypersurfaces in $N$, while
the singular ones (if exist) are called the focal varieties, which are smooth submanifolds of $N$.
For $N=S^{n+1}(1)$, a well-known result of E. Cartan states that an isoparametric hypersurface in $S^{n+1}(1)$
 is nothing but a hypersurface with constant principal curvatures.
Without loss of generality, suppose now that $\mathrm{Image}(f)=[-1,1]$.
The focal varieties of $f$, $M_{+}=:f^{-1}(+1)$ and $M_{-}=:f^{-1}(-1)$, are connected, minimal submanifolds of $S^{n+1}(1)$ (cf. \cite{CR}).

Quite recently, the classification of isoparametric hypersurfaces in $S^{n+1}(1)$ was accomplished by Miyaoka and Chi. This settles S.T.Yau's
34th problem in \cite{Yau82}. To be more precise,
let $g$ be the number of distinct principal curvatures with multiplicity $m_i$ ($i=1, \cdots, g$). According to \cite{Mun},
$g$ can be only $1, 2, 3, 4$ or $6$, and $m_i=m_{i+2}$ (subscripts mod $g$). When $g=1, 2, 3$ and $6$, the isoparametric
hypersurfaces are homogeneous (cf. \cite{DN85}, \cite{Miy13}, \cite{Miy16} );
when $g=4$, the isoparametric hypersurfaces are either of OT-FKM type (defined later), or homogeneous with $(m_1, m_2) =(2, 2), (4, 5)$
(cf. \cite{CCJ07}, \cite{Chi11}, \cite{Chi13}, \cite{Chi16}).
In these papers towards the classification problem, the Condition A always plays a very important role in the approach (cf. \cite{Chi12}).

The geometric meaning of Condition A
is that at some point in a focal submanifold, the kernels of all second fundamental tensors (shape operators) coincide (cf. \cite{OT75}). It is actually satisfied everywhere on each focal submanifold when $g=1,2,3$ and $6$. The proofs are straightforward for the former three cases but far from trivial for $g=6$ (cf. \cite{Miy93}, \cite{Miy13}). In fact, Condition A is usually
used to detect the homogeneity and inhomogeneity. By using delicate isoparametric triple system,
\cite{DN83} succeeded in classifying all the isoparametric hypersurfaces with $g=4$, on which Condition A is fulfilled (on one focal submanifold). Translating their language of isoparametric triple systems
into a plainer form, one has

\noindent \textbf{Theorem (\cite{DN83})}\,\,
\textit{On the focal submanifolds of isoparametric hypersurfaces in $S^{n+1}(1)$ with $g=4$, there are points satisfying Condition A on and only on the following:}
\textit{\begin{itemize}
  \item [(1).] The focal submanifolds $M_+$ of OT-FKM type with $(m_1, m_2)=(1,k), (3, 4k)$ and $(7, 8k)$ ($k\geq 1$);
  \item [(2).] The focal submanifolds $M_-$ of OT-FKM type with $(m_1, m_2)=(4,3)$ and $(8,7)$;
  \item [(3).] The focal submanifold diffeomorphic to $\mathbb{C}P^3$ with $(m_1, m_2)=(2,2)$.\footnote{In the $(2, 2)$ case, one focal submanifold is diffeomorphic to $\mathbb{C}P^3$ and the other diffeomorphic to $\widetilde{G}_2(\mathbb{R}^5)$ (cf. \cite{QTY13}). In fact, \cite{DN83} and \cite{OT76} did not point out which one of, or whether both of the focal submanifolds satisfy Condition A. This will be determined in this paper by using the normal scalar curvature inequalities (\ref{1st}, \ref{2nd}).}
\end{itemize}}
Here comes a natural question: which points on these focal submanifolds satisfy Condition A ?  Given an isoparametric hypersurface in $S^{n+1}(1)$ with $g=4$, we define
the set
$$C_A=:\{ \mathrm{the~ points~ satisfying~ Condition~ A}\}.$$
As one of the main purposes of this paper, we aim at determining $C_A$ for all the focal submanifolds mentioned in the theorem above.
Here we should point out that \cite{DN83'} gave a partial answer to this question in $M_-$ of OT-FKM type with multiplicities $(4, 3)$ and $(8, 7)$. In fact, they found subsets of $C_A$ in $M_-$ under an additional assumption that the point is an eigenvector of the fixed Clifford matrix $P_0$.

Next, let us explain Condition A in a more accurate way. On a focal submanifold $M_+$ with $g=4$ and codimension $m_1+1$ in $S^{n+1}(1)$,
it is well known that the shape operator associated with any unit normal vector has eigenvalues $+1$, $-1$ and $0$ with multiplicities $m_2$, $m_2$ and $m_1$, respectively
( as for $M_-$, replace the corresponding multiplicities $m_1$, $m_2$ with $m_2$, $m_1$). Let $\{n_{\alpha}, ~\alpha=0,1,\cdots, m_1\}$ be an orthonormal basis of the normal space $T_x^{\bot}M_+$. Following \cite{OT75}, we denote by $V_+$, $V_-$ and $V_0$ the eigenspaces of the shape operator in the normal direction $n_0$ associated with the eigenvalues $1$, $-1$ and $0$, respectively.
With this understood, the shape operators $S_{\alpha}=:S_{n_{\alpha}}$ ($0\leq \alpha\leq m_1$), upon certain fixing orthonormal basis $\{e_1,\cdots, e_{m_1+2m_2}\}$ of $T_x M_+$, can be written as:
\begin{equation}\label{sff}
 S_0=
\left(
  \begin{array}{ccc}
    I_{m_2} &  &  \\
     & -I_{m_2} &  \\
     &  & 0 \\
  \end{array}
\right),\quad
S_{\alpha}=
\left(
  \begin{array}{ccc}
    0 & A_{\alpha} & B_{\alpha} \\
    A^t_{\alpha} & 0 & C_{\alpha} \\
    B^t_{\alpha} & C^t_{\alpha} & 0 \\
  \end{array}
\right), 1\leq \alpha\leq m_1,
\end{equation}
where $A_{\alpha}: V_-\rightarrow V_+$, $B_{\alpha}: V_0\rightarrow V_+$ and $C_{\alpha}: V_0\rightarrow V_-$. Then a point
$x\in M_+$ is of Condition A if and only if $B_{\alpha}=C_{\alpha}=0$ for all $1\leq \alpha\leq m_1$.

Moreover, we find an interesting phenomenon that Condition A can be alternatively characterized by the normal scalar curvature (function) inequality on the focal submanifolds.
In fact, the normal scalar curvature (function) achieves an upper bound at the points satisfying Condition A, and this upper bound is sharper than that in the DDVV (normal scalar curvature) inequality.

Recall that the normal scalar curvature of $M_+^{m_1+2m_2}$ in $S^{2m_1+2m_2+1}(1)$ can be defined as (slightly different from that in \cite{DDVV99})
\begin{equation}\label{normalscalar}
\rho^{\bot}=:\sum_{\alpha,\beta=0}^{m_1}\|[S_{\alpha}, S_{\beta}]\|^2=\sum_{i,j=1}^{m_1+2m_2}\sum_{\alpha,\beta=0}^{m_1}\langle R^{\bot}(e_i,e_j)n_\alpha, n_\beta\rangle^2=\|R^{\bot}\|^2,
\end{equation}
where $R^{\bot}$ is the normal curvature tensor. We remark that the second equality above follows from the Gauss and Ricci equations.
Notice that the squared norm of the second fundamental form $S$
is constant on the focal submanifolds, more precisely, $\|S\|^2=2m_2(m_1+1)$ on $M_+$ ($g=4$), (resp. $2m_1(m_2+1)$ on $M_-$).
By the minimality of focal submanifolds in $S^{n+1}(1)$, the DDVV inequality on $M_+$ (resp. $M_-$ with $(m_1,m_2)$ interchanged) ($g=4$ for example)
can be transformed into (cf. \cite{DDVV99}, \cite{GT08}, \cite{Lu11})
\begin{eqnarray}\label{DDVV}
\rho^{\bot} &\leq& 4m_2^2(m_1+1)^2.
\end{eqnarray}
Similarly, it is not difficult to obtain the corresponding forms of the DDVV inequality on focal submanifolds with $g\neq 4$.
Generally speaking, the submanifolds in space forms whose normal scalar curvature achieves the equality of the DDVV inequality everywhere (the so-called Wintgen ideal submanifolds) are still unclassified, although many partial results and studies are available in the literature (cf. \cite{CL08}, \cite{DT09}, \cite{XLMW14}, etc.).
When restricted to the focal submanifolds of isoparametric hypersurfaces in unit spheres, the DDVV inequality hardly achieves the equality even at a point.
Fortunately, by the pointwise equality condition of the DDVV inequality and the special properties of the shape operators of focal submanifolds, we find two examples of the Wintgen ideal submanifolds:
\begin{prop}\label{DDVVequality}
On the focal submanifolds of isoparametric hypersurfaces in $\mathbb{S}^{n+1}(1)$, the DDVV inequality achieves equality (everywhere) on and only on:
\begin{itemize}
\item[(1).] $M_+\cong M_-\cong \mathbb{R}P^2\subset \mathbb{S}^4$ in the case of $g=3$, $(m_1,m_2)=(1,1)$;
\item[(2).] $M_+\cong SO(3)\subset \mathbb{S}^5$ in the OT-FKM type of $g=4$, $(m_1,m_2)=(1,1)$.
\end{itemize}
 \end{prop}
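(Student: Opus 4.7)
The plan is to combine the pointwise equality condition of the DDVV inequality (Ge--Tang \cite{GT08}, Lu \cite{Lu11}) with the strong rigidity imposed by the isoparametric structure. The equality $\rho^\perp = \|S\|^4$ at $x$ holds if and only if one can find orthonormal bases of $T_xM$ and $T_x^\perp M$ in which all but two shape operators vanish, and the two surviving ones have the Wintgen form
\[
 \widetilde S_1 = \mu\begin{pmatrix}0&1\\1&0\end{pmatrix}\oplus 0,\qquad
 \widetilde S_2 = \mu\begin{pmatrix}1&0\\0&-1\end{pmatrix}\oplus 0,
\]
with $\mu>0$; in particular each surviving $\widetilde S_r$ has rank exactly $2$.

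For necessity, I would exploit two rigidity facts. First, on any focal submanifold the shape operators $\{S_\alpha\}_{\alpha=0}^{m_1}$ are linearly independent, since a nontrivial linear relation would produce a vanishing shape operator in a unit normal direction, contradicting the fixed nonzero eigenvalues of every unit-normal shape operator; hence the Wintgen requirement that ``all but two shape operators vanish'' forces the codimension to be at most $2$. Second, any orthonormal rotation of the normal frame again yields shape operators of unit normals on the focal submanifold, which therefore all share the same nonzero eigenvalue list (determined by $g$ and the multiplicities); hence the Wintgen requirement forces this common rank to equal $2$. Running through the admissible values of $g$ recalled in the introduction:
\begin{itemize}
 \item $g=1,2$: $M_\pm$ is a point or a totally geodesic sphere, so $\|S\|^2=\rho^\perp=0$, a trivial equality that is excluded.
 \item $g=3$ (with $m_1=m_2=m$): rank $=2m$ and codim $=m+1$, so only $m=1$ survives, yielding the Veronese surface $\mathbb{R}P^2\subset \mathbb{S}^4$.
 \item $g=4$: on $M_+$ the rank is $2m_2$ and the codim is $m_1+1$, forcing $(m_1,m_2)=(1,1)$ (symmetrically on $M_-$); by the classification recalled above, this must be of OT-FKM type with $M_+\cong SO(3)\subset \mathbb{S}^5$.
 \item $g=6$: rank $=4m\ge 4$, no solution.
\end{itemize}

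For sufficiency, both candidates are extrinsically homogeneous ($\mathbb{R}P^2$ via the standard $SO(3)$-equivariant Veronese embedding, and $SO(3)\subset \mathbb{S}^5$ as an orbit of the symmetry group of the OT-FKM Clifford system $\{P_0,P_1\}$), so equality at a single base point propagates to every point. At that base point the block form (\ref{sff}) reduces to $1\times 1$ scalar blocks (since $m_2=1$), and I would write $S_0,S_1$ explicitly from the Clifford data and exhibit the orthogonal change of frames that brings them into the displayed Wintgen form; the Veronese case is the classical Wintgen computation.

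The main obstacle is ruling out $g=6$, for which the codimension is already $2$ when $m=1$ and the first rigidity fact alone is powerless. The resolution is that after any orthonormal rotation of the normal frame, the new $\widetilde S_r$ still correspond to unit normal vectors on the focal submanifold, and hence retain the full nonzero eigenvalue list $\pm\sqrt{3},\,\pm 1/\sqrt{3},\,0$ each of multiplicity $m$; so their rank equals $4m\ge 4$ and never $2$.
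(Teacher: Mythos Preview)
Your route through the pointwise DDVV equality characterization is legitimate and close in spirit to the paper's first suggestion (``calculate directly by using the properties of the OT-FKM type''), but there is a genuine gap in the necessity direction for $g=4$. Your two extracted constraints---codimension at most $2$ and rank exactly $2$---are \emph{both} satisfied by $M_-$ in the OT-FKM $(1,1)$ family: there $\mathrm{codim}\,M_-=m_2+1=2$ and every unit-normal shape operator has rank $2m_1=2$. You acknowledge this with the parenthetical ``symmetrically on $M_-$'', but then simply assert that the answer is $M_+\cong SO(3)$ without explaining why $M_-$ is excluded. Since the shape operators on $M_-$ carry exactly the same eigenvalue list $(+1,-1,0)$ as on $M_+$, rank and codimension alone cannot distinguish them; your sufficiency check on $M_+$ says nothing about $M_-$.

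The missing observation is that the Wintgen normal form encodes more than rank: the two surviving operators $\widetilde S_1,\widetilde S_2$ are supported on the \emph{same} $2$-plane, so every unit-normal shape operator $\cos\theta\,\widetilde S_1+\sin\theta\,\widetilde S_2$ shares that common kernel. In the language of this paper that is exactly Condition~A. Once you add this, the paper's second suggested approach finishes the job: by the Dorfmeister--Neher theorem (or Tables~\ref{table-M+}--\ref{table-M-}), for OT-FKM $(1,k)$ one has $C_A=M_+$ on $M_+$ but $C_A=\emptyset$ on $M_-$. Equivalently, compare the DDVV equality value $4m_2^2(m_1+1)^2$ with the upper bound $8m_1m_2(m_1+1)$ of (\ref{1st}): for $(1,1)$ they coincide (both equal $16$), so DDVV equality is equivalent to Condition~A; on $M_-$ one instead has $C_P=M_-$, whence $\rho^\perp$ equals the lower bound $4$, far below $16$. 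Either way, the exclusion of $M_-$ requires an extra ingredient that your outline does not supply.
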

We leave the proof to readers, although the proof of ``\emph{only on}" is not trivial (one can either calculate directly by using the properties of the OT-FKM type, or simply compare with the classification of $C_A$ and $C_P$ in Theorem \ref{thm1} below). It should be remarked that \cite{XLMW14} studied the M\"{o}bius geometry of three dimensional Wintgen ideal submanifolds in $S^5(1)$, and the second example in the proposition above is exactly theirs.

Next, using the formulas (\ref{sff}, \ref{normalscalar}), we shall derive the following new normal scalar curvature inequalities on the focal submanifold $M_+$ (resp. $M_-$ with $(m_1,m_2)$ interchanged):
\begin{eqnarray}\label{1st}
2m_1m_2(m_1+1)~\leq~ \rho^{\bot}~\leq~ 8m_1m_2(m_1+1).
\end{eqnarray}
Notice that the upper bound in (\ref{1st}) is shaper than that in the DDVV inequality (\ref{DDVV}).
We shall prove later that this upper bound can be reached exactly at
points of Condition A, whose set was denoted by $C_A$ as before.
On the other hand, the first equality in (\ref{1st}) holds at points with parallel second fundamental form. We define
$$C_P=:\{ \mathrm{the~ points~ with~ parallel~ second~ fundamental~ form}\}.$$
In Section 2, we will express the normal scalar curvature $\rho^{\bot}$ in an alternative way as
\begin{equation}\label{another rho}
\rho^{\bot}=6\|\sum_{\alpha=0}^{m_1}S_{\alpha}^2\|^2-4m_2(m_1+1)(m_1+3).
\end{equation}
Then using Schwartz inequality, we obtain a sharper lower bound for $\rho^{\bot}$ on $M_+$ (resp. $M_-$ with $(m_1,m_2)$ interchanged):
\begin{equation}\label{2nd}
\rho^{\bot}\geq 2m_1m_2(m_1+1)+\frac{6m_1m_2(m_1+1)}{2m_2+m_1}(2m_2-m_1-2),
\end{equation}
where the equality holds at points of Einstein condition, i.e., the Ricci curvature is constant at that point.
We denote
$$C_E=:\{ \mathrm{the~ points~ with~ constant~ Ricci~ curvature~ in~ all~ tangent~ directions~}\}.$$

Summarizing all the estimates above, and combining with the results in \cite{TY15}, \cite{TY17}, \cite{QTY13} and \cite{LZ16},
we are able to prove the following theorem, determining all the sets $C_A$, $C_P$ and $C_E$.

\begin{thm}\label{thm1}
Let $M_+$ (resp. $M_-$) be a focal submanifold of an isoparametric hypersurface with $g=4$ and
codimension $m_1+1$ (resp. $m_2+1$) in $S^{n+1}(1)$. Then we have the normal scalar curvature inequalities
(\ref{1st}) and (\ref{2nd}) on $M_+$ (resp. $M_-$). The following Table \ref{table-M+} and \ref{table-M-} give all the sets
$C_A$, $C_P$ and $C_E$ in $M_+$ and $M_-$. The first equality in (\ref{1st}) holds on and only on $C_P$, the second one in (\ref{1st})
holds on and only on $C_A$, and the equality in (\ref{2nd}) holds on and only on $C_E$.
Here, except for $(2,2)$, all multiplicities $(m_1,m_2)$ stand for $(m,l-m-1)$ in OT-FKM type; $D$ and $I$ are short for definite and indefinite.
$\Omega_{k}=:(S^k\times S^7)/\mathbb{Z}_2,$ for convenience.
\begin{center}{\scriptsize
\begin{table}[!htb]
\caption{On the focal submanifold $M_+$}\label{table-M+}
\begin{tabular}{|c|c|c|c|c|c|c|c|c|c|}
\hline
     & $(1,k)$   & $(2,1)$   & $(6,1)$   & $(3,4k)$                  &  $(4,3)$D &  $(7,8k)$                  & $(2,2)$& others   \\
\hline
$C_A$& $M_+$   & $\emptyset$      &   $\emptyset$      & $S^{3+4k}\sqcup S^{3+4k}$ &   $\emptyset$                   & $\Omega_{k}\sqcup\Omega_{k}$ &   $\mathbb{C}P^3$ & $\emptyset$\\
\hline
$C_P$&  $\emptyset$     &  $M_+$ &$M_+$ &   $\emptyset$     &    $M_+$            & $\emptyset$    &  $\emptyset$& $\emptyset$ \\
\hline
$C_E$&  $\emptyset$     &   $\emptyset$   &   $\emptyset$      &  $\emptyset$      &    $M_+$        &  $\emptyset$     & $\emptyset$ & $\emptyset$\\
\hline
\end{tabular}
\end{table}}
\end{center}

\begin{center}{\scriptsize
\begin{table}[!htb]
\caption{On the focal submanifold $M_-$}\label{table-M-}
\begin{tabular}{|c|c|c|c|c|c|c|c|c|c|c|}
\hline
     & $(1,k)$ & $(2, 1)$ & $(6,1)$&  $(4,3)$D  & $(4,3)$I       & $(8,7)$D                     &  $(8,7)$I    & $(2,2)$& others   \\
\hline
$C_A$&  $\emptyset$ & $M_-$ & $M_-$ &  $M_-$         & $S^{7}\sqcup S^{7}$ &  $\tiny{(S^1\times S^{15})/\mathbb{Z}_2}$    & $S^{15}\sqcup S^{15}$  & $\emptyset$ &$\emptyset$\\
\hline
$C_P$&  $M_-$  & $\emptyset$        & $\emptyset$ & $\emptyset$    &  $\emptyset$    &$\emptyset$  &$\emptyset$ & $\widetilde{G}_2(\mathbb{R}^5)$& $\emptyset$\\
\hline
$C_E$&  $\emptyset$   & $\emptyset$       & $\emptyset$ &    $\emptyset$         &$\emptyset$ & $\emptyset$    &      $\emptyset$   & $\widetilde{G}_2(\mathbb{R}^5)$& $\emptyset$\\
\hline
\end{tabular}
\end{table}}
\end{center}
\end{thm}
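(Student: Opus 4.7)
The proof splits into three tasks: deriving the two inequalities from the block data, characterizing their equality sets algebraically, and then carrying out a case-by-case determination of $C_A$, $C_P$, and $C_E$.

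I would begin from the block form (\ref{sff}) together with the identity (\ref{another rho}). The isoparametric condition that every shape operator on $M_+$ has eigenvalues $+1,-1,0$ with multiplicities $m_2,m_2,m_1$ gives $\|S_\alpha\|^2=2m_2$, hence $\|A_\alpha\|^2+\|B_\alpha\|^2+\|C_\alpha\|^2=m_2$ for every $\alpha\geq 1$. A direct block computation gives $\|[S_0,S_\alpha]\|^2=2(4\|A_\alpha\|^2+\|B_\alpha\|^2+\|C_\alpha\|^2)$ and analogous but more involved formulas for $\|[S_\alpha,S_\beta]\|^2$ with $\alpha,\beta\geq 1$; summing and using that at a Condition A point the constraint $\|S_\alpha\|^2=2m_2$ forces each $A_\alpha$ to be orthogonal yields the upper bound $8m_1m_2(m_1+1)$ of (\ref{1st}). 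For (\ref{2nd}), I would apply the Cauchy-Schwarz inequality to the symmetric matrix $\sum_\alpha S_\alpha^2$, whose trace equals $2m_2(m_1+1)$ on the $(m_1+2m_2)$-dimensional tangent space, giving $\|\sum_\alpha S_\alpha^2\|^2\geq 4m_2^2(m_1+1)^2/(m_1+2m_2)$, and then substitute into (\ref{another rho}). The lower bound $\rho^\perp\geq 2m_1m_2(m_1+1)$ of (\ref{1st}) instead uses a structure-dependent estimate of $\|\sum_\alpha S_\alpha^2\|^2$ that exploits the fixed block structure of $S_0^2$ and the positivity of the contributions of $A_\alpha,B_\alpha,C_\alpha$.

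Next, I would identify each equality locus. The upper bound of (\ref{1st}) is achieved iff $B_\alpha=C_\alpha=0$ for every $\alpha\geq 1$, which is exactly Condition A, so the attainment set is $C_A$. Equality in (\ref{2nd}) forces $\sum_\alpha S_\alpha^2=\frac{2m_2(m_1+1)}{m_1+2m_2}\,\mathrm{Id}$; by the Gauss equation for a minimal submanifold of a sphere this is equivalent to the Ricci tensor being a constant multiple of the metric, hence the attainment set is $C_E$. Equality in the lower bound of (\ref{1st}) becomes the algebraic identity $\|\sum_\alpha S_\alpha^2\|^2=m_2(m_1+1)(m_1+2)$, and I would match this with the parallel-second-fundamental-form description of isoparametric focal submanifolds in \cite{TY15,TY17} to conclude that the attainment set is $C_P$.

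Finally, the theorem of \cite{DN83} quoted above already restricts the non-empty $C_A$ cases to those listed. Within each such case I would write $M_+$ (or $M_-$) in OT-FKM Clifford parametrization and translate $B_\alpha(x)=C_\alpha(x)=0$ into explicit equations in the symmetric Clifford matrices $P_0,\dots,P_{m_1}$; solving these recovers the disjoint unions $S^{3+4k}\sqcup S^{3+4k}$ for $(3,4k)$, $\Omega_k\sqcup\Omega_k$ for $(7,8k)$, and their $M_-$ counterparts. The $(2,2)$ row is settled by comparing the bounds (\ref{1st}) and (\ref{2nd}) on the two focal submanifolds described in \cite{QTY13}, showing that the $\mathbb{C}P^3$ side carries $C_A$ while $\widetilde{G}_2(\mathbb{R}^5)$ carries $C_P$ and $C_E$. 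The $C_P$ and $C_E$ rows in the remaining cases follow from the algebraic equality conditions combined with the classifications in \cite{TY15,TY17,LZ16}. I expect the principal obstacle to be the lower bound in (\ref{1st}) and its identification with $C_P$: that bound is not a single Cauchy-Schwarz step, and the equivalence with parallel second fundamental form requires a delicate use of the Codazzi and Simons identities adapted to the block form of the shape operators; once that bridge is built, the remainder is bookkeeping driven by the OT-FKM Clifford data and the cited classification results.
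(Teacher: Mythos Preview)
Your overall architecture matches the paper's, but two of your intermediate steps are handled much more cleanly there, and the step you flag as the ``principal obstacle'' is in fact the easiest.

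First, for the inequality (\ref{1st}) you propose to compute all the commutators $\|[S_\alpha,S_\beta]\|^2$ in the block form (\ref{sff}) and sum. The paper avoids the mixed $\alpha,\beta\geq 1$ commutators entirely: since each $S_\beta$ has the same spectrum $(+1,-1,0)$ with the same multiplicities, one may diagonalize $S_\beta$ instead of $S_0$ and write the remaining $S_\alpha$'s in the same block pattern. Thus the single computation $\|[S_0,S_\alpha]\|^2=8\|A_\alpha\|^2+2\|B_\alpha\|^2+2\|C_\alpha\|^2$, together with $\|A_\alpha\|^2+\|B_\alpha\|^2+\|C_\alpha\|^2=m_2$, already gives $2m_1m_2\leq \sum_{\alpha}\|[S_\alpha,S_\beta]\|^2\leq 8m_1m_2$ for every fixed $\beta$, and summing over $\beta$ yields (\ref{1st}). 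Your phrase ``at a Condition A point $\ldots$ forces each $A_\alpha$ to be orthogonal'' is not needed for the upper bound; the bound is just $8\|A_\alpha\|^2+2(\ldots)\leq 8(\|A_\alpha\|^2+\ldots)$.

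Second, and more importantly, the identification of the lower bound with $C_P$ does not require any ``structure-dependent estimate of $\|\sum_\alpha S_\alpha^2\|^2$'' or any delicate Codazzi analysis. On $M_+$ one has $\|S\|^2=2m_2(m_1+1)$ constant, and the Simons identity for minimal submanifolds of spheres collapses to the pointwise equation
\[
\|\overline{\nabla}S\|^2=\rho^{\perp}-2m_1m_2(m_1+1).
\]
This single line both reproves the lower bound of (\ref{1st}) and shows that equality holds exactly when $\overline{\nabla}S=0$, i.e.\ on $C_P$. So what you anticipate as the hard bridge is a one-line consequence of Simons; your proposed route through $\|\sum_\alpha S_\alpha^2\|^2\geq m_2(m_1+1)(m_1+2)$ is correct in value but unnecessary.

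The rest of your plan---Cauchy--Schwarz for (\ref{2nd}) and its Einstein characterization via the Gauss equation, the use of \cite{DN83} to localize the non-empty $C_A$ cases, the Clifford/octonion computations for $(3,4k)$, $(7,8k)$, $(8,7)$, and the citations of \cite{QTY13,TY15,LZ16} for the $C_P$ and $C_E$ tables---is exactly what the paper does.
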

\begin{rem}\label{rem1}
According to Theorem 6.5 in \cite{FKM81}, the isoparametric families of OT-FKM type with multiplicities $(2, 1)$, $(6, 1)$ are
congruent to those with multiplicities $(1, 2)$, $(1, 6)$; The indefinite $(4, 3)$ family is congruent to the $(3, 4)$ family.
Therefore, there are some coincidences of the sets $C_A$, $C_P$ and $C_E$ in Tables \ref{table-M+} and \ref{table-M-}.  
\end{rem}

\begin{rem}
For $g=3$, as it is well known, the focal submanifolds are Veronese embeddings of projective planes $\mathbb{F}P^2$, $\mathbb{F}=\mathbb{R},\mathbb{C},\mathbb{H},\mathbb{O}$, thus they are Einstein. Moreover, they are examples of symmetric $R$-spaces (\cite{KT68}), and thus have parallel second fundamental form. As a result, $C_P=C_E=M_{\pm}$. For $g=6$, all focal submanifolds have non-vanishing covariant derivative of second fundamental form (\cite{LZ16}) and are not Einstein (\cite{Xie}), thus $C_P=C_E=\emptyset$ by the homogeneity (\cite{DN85, Miy13}).
So far we have determined all the three sets where the normal scalar curvature inequalities (\ref{1st}, \ref{2nd}) reach equality for any isoparametric hypersurface in unit spheres.
Conversely, if one could prove directly that the normal scalar curvature function achieves its maximum at some point in the inequality (\ref{1st}), then the isoparametric hypersurfaces with $g=4$ are of OT-FKM type or with multiplicities $(2,2)$. In particular, this approach would provide a new geometric proof for the the last classified case $(7,8)$ done by \cite{Chi16}, where deep algebraic geometry method was developed (see for a detailed discussion in \cite{Chi15}).
\end{rem}
Next we introduce the isoparametric hypersurfaces of OT-FKM type which exhaust almost all of the isoparametric hypersurfaces with $g=4$.
For a symmetric Clifford system $\{P_0,\cdots,P_m\}$ on $\mathbb{R}^{2l}$, \emph{i.e.}
$P_{\alpha}$'s are symmetric matrices satisfying $P_{\alpha}P_{\beta}+P_{\beta}P_{\alpha}=2\delta_{\alpha\beta}I_{2l}$,
a homogeneous polynomial $F$ of degree $4$ on $\mathbb{R}^{2l}$ is defined as:
\begin{eqnarray}\label{FKM isop. poly.}
&&F(x) = |x|^4 - 2\displaystyle\sum_{\alpha = 0}^{m}{\langle
P_{\alpha}x,x\rangle^2}.
\end{eqnarray}
It is easy to verify that $f=F|_{S^{2l-1}(1)}$ is an isoparametric function
on $S^{2l-1}(1)$, which is said to be of OT-FKM type following the names of Ozeki, Takeuchi, Ferus, Karcher and M\"{u}nzner (\cite{OT75,OT76,FKM81}).
The focal submanifolds are $M_+=f^{-1}(1)$, $M_-=f^{-1}(-1)$, and the multiplicity pair is $(m_1, m_2)=(m, l-m-1)$, provided
$m>0$ and $l-m-1>0$, where $l = k\delta(m)$ $(k=1,2,3,\cdots)$, $\delta(m)$ is the dimension
of an irreducible module of the Clifford algebra $\mathcal{C}_{m-1}$. 
When $m\equiv 0(mod ~4)$, according to \cite{FKM81}, there are two kinds of OT-FKM type isoparametric
polynomials which are distinguished by $\mathrm{Trace}(P_0P_1\cdots P_m)$, namely, the family with $P_0P_1\cdots P_m=\pm Id$, where without loss of generality
we take the $+$ sign, called the \emph{definite} family, and the others with $P_0P_1\cdots P_m\neq\pm Id$,
called the \emph{indefinite} family. There are exactly $[\frac{k}{2}]$ non-congruent indefinite families.

The isoparametric hypersurfaces of OT-FKM type with multiplicities $(8, 7)$ have a wealth of geometric contents,
being also the last case in the process of classification (cf. \cite{Chi16}).
During our investigation for the set $C_A$ in Theorem \ref{thm1}, we find very intriguing relations between the focal submanifolds in the definite and
indefinite families (now only one indefinite family).
Denote by $I$ the indefinite family, that is, $P=:P_0P_1\cdots P_8\neq \pm Id$. It is easily seen that $\{\tilde{P_0}=:P_0P, \cdots, \tilde{P_8}=:P_8P\}$ constitutes
a new symmetric Clifford system with $\tilde{P_0}\tilde{P_1}\cdots \tilde{P_8}=Id$, which corresponds to the definite family, denoted by $D$.
We add superscripts $I$ and $D$ to distinguish the corresponding focal submanifolds and the sets $C_A$. As a byproduct, we obtain

\begin{prop}\label{prop}
Let $M_{+}$, $M_-$ be the focal submanifolds of isoparametric hypersurfaces of OT-FKM type with multiplicities $(8, 7)$ in $S^{n+1}(1)$.
Then we can define isoparametric functions $h$ on $M_-^I$ and $\tilde{h}$ on $M_-^D$, whose isoparametric hypersurfaces $h^{-1}(0)$ and $\tilde{h}^{-1}(0)$:
$$M_+^D \hookrightarrow M_-^I,\quad M_+^I \hookrightarrow M_-^D$$
are totally isoparametric, austere hypersurfaces. Moreover, the focal variety of $h$ on $M_-^I$ is exactly the
set $C_A^I\subset M_-^I$.
\end{prop}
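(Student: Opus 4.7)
The plan is to take $P := P_0 P_1 \cdots P_8$ and define
$$h(x) := \langle Px, x \rangle \text{ on } M_-^I, \qquad \tilde h(x) := \langle Px, x \rangle \text{ on } M_-^D.$$
Since $m = 8$ is even, $P$ is symmetric, commutes with each $P_\alpha$, and satisfies $P^2 = \mathrm{Id}$. In the indefinite case $P \neq \pm\mathrm{Id}$, so $\mathbb{R}^{32} = V^+ \oplus V^-$ splits into the $\pm 1$ eigenspaces of $P$, both $P_\alpha$-invariant. From the Clifford representation theory of $\{P_0, \ldots, P_8\}$ (two inequivalent real irreducible modules of dimension $16$, distinguished by the value of $P$, both occurring in $\mathbb{R}^{32}$ in the indefinite case) one has $\dim V^\pm = 16$, hence $V^\pm \cap S^{31} \cong S^{15}$.

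The first key step is to show $V^\pm \cap S^{31} \subset M_-^I$. On $V^\pm$ the nine operators $P_\alpha|_{V^\pm}$ form a maximal symmetric Clifford system on $\mathbb{R}^{16}$ (with product $\pm \mathrm{Id}$), for which the generalized Hopf identity $\sum_\alpha \langle P_\alpha y, y \rangle^2 = |y|^4$ holds; this yields $F^I \equiv -1$ on $V^\pm \cap S^{31}$ and proves the inclusion. The same identity puts the two $S^{15}$'s inside $M_-^D$ as well.

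Next I would identify $h^{-1}(0) \cap M_-^I$ with $M_+^D$. Writing $x = x^+ + x^-$ with $x^\pm \in V^\pm$, the $P_\alpha$-invariance of $V^\pm$ gives
$$\langle P_\alpha P x, x \rangle = \langle P_\alpha x^+, x^+ \rangle - \langle P_\alpha x^-, x^- \rangle,$$
so $x \in M_+^D$ is equivalent to $\langle P_\alpha x^+, x^+ \rangle = \langle P_\alpha x^-, x^- \rangle$ for every $\alpha$. Applying the Hopf identity to $x^\pm/|x^\pm|$ then forces $|x^+| = |x^-| = 1/\sqrt{2}$ and $\sum_\alpha \langle P_\alpha x, x \rangle^2 = 1$, so $x \in M_-^I$ and $h(x) = 0$. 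Conversely, for $x \in M_-^I \cap h^{-1}(0)$, setting $a_\alpha = \langle P_\alpha x^+, x^+ \rangle$ and $b_\alpha = \langle P_\alpha x^-, x^- \rangle$, the Hopf identity gives $\sum_\alpha a_\alpha^2 = \sum_\alpha b_\alpha^2 = \tfrac{1}{4}$ while $\sum_\alpha (a_\alpha + b_\alpha)^2 = 1$; Cauchy--Schwarz then forces $a_\alpha = b_\alpha$, placing $x$ in $M_+^D$. Hence $M_+^D = h^{-1}(0) \cap M_-^I$, and the entirely parallel argument (noting that $x \in M_+^I$ imposes $a_\alpha + b_\alpha = 0$) gives $M_+^I = \tilde h^{-1}(0) \cap M_-^D$.

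Finally I would verify the isoparametric, austere and totally isoparametric claims. For isoparametricity of $h$ on $M_-^I$, one computes $|\nabla h|^2$ and $\Delta h$ starting from $\nabla_{\mathbb{R}^{32}} h = 2Px$, projecting onto $T M_-^I$ and using the $V^\pm$-splitting plus the Hopf identity; both should emerge as explicit functions of $h$ alone. The focal variety of $h$ is then $h^{-1}(\pm 1) = V^\pm \cap S^{31} = S^{15} \sqcup S^{15}$, which matches $C_A^I$ of Table \ref{table-M-}. The austereness (and hence totally isoparametric character) of $h^{-1}(0) = M_+^D$ in $M_-^I$ follows from an orthogonal involution $U$ of $\mathbb{R}^{32}$ that commutes with $P_1, \ldots, P_8$ and anticommutes with $P_0$; such $U$ arises from an orthogonal isomorphism $\phi : V^+ \to V^-$ of the two irreducible modules for the subalgebra generated by $P_1, \ldots, P_8$, and one checks that $U$ anticommutes with $P$, preserves $M_-^I$ isometrically and negates $h$, so its restriction to $h^{-1}(0)$ reverses the sign of the shape operator and forces principal curvatures in $\pm$ pairs. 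I expect the main obstacle to be the explicit evaluation of $|\nabla h|^2$ and $\Delta h$ as polynomials in $h$, since this requires careful handling of the normal bundle of $M_-^I \subset S^{31}$ combined with the eigenspace splitting, while the remaining ingredients reduce cleanly to the Hopf identity and the existence of $U$.
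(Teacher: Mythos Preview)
Your setup is correct and matches the paper: $h(x)=\langle Px,x\rangle$ with $P=P_0\cdots P_8$, the commutation $P_\alpha P=PP_\alpha$, the splitting $\mathbb{R}^{32}=V^+\oplus V^-$ with $\dim V^\pm=16$, and the identification of the focal variety $h^{-1}(\pm1)=V^\pm\cap S^{31}=S^{15}\sqcup S^{15}=C_A^I$. Your identification $M_+^D=h^{-1}(0)$ via the Hopf identity $\sum_\alpha\langle P_\alpha y,y\rangle^2=|y|^4$ on $V^\pm$ together with Cauchy--Schwarz is valid and is actually cleaner than the paper's argument, which carries out a four-block decomposition with respect to $E_\pm(P_0)\cap E_\pm(P)$ and a case analysis.

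There is, however, a genuine gap in the last part. Your involution $U$ exists (indeed $U=\phi\oplus\phi^{-1}$ with $\phi:V^+\to V^-$ the intertwiner for $P_1,\dots,P_8$), it preserves $M_-^I$ and negates $h$; but this only yields $\mathrm{spec}(S_\nu)\big|_{Ux}=-\,\mathrm{spec}(S_\nu)\big|_x$. Since $U$ does not fix $M_+^D$ pointwise and you have not yet shown that the principal curvatures are constant along $M_+^D$, this does \emph{not} give austereness at a single point. Moreover, ``austere $\Rightarrow$ totally isoparametric'' is false in general Riemannian manifolds: austere is a pointwise symmetry of the spectrum, whereas totally isoparametric requires the principal curvatures to be constant on every nearby parallel hypersurface. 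So the final implication in your outline fails.

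The paper handles these two points quite differently. For austereness it observes that the unit normal $\nu=\nabla h/|\nabla h|$ of $M_+^D$ in $M_-^I$ is simultaneously a unit normal of $M_+^D$ in $S^{31}$, and that for nested submanifolds the two shape operators in the direction $\nu$ coincide on $TM_+^D$; hence the eigenvalues are exactly those of the focal submanifold $M_+^D\subset S^{31}$, namely $+1,-1$ (each with multiplicity $7$) and $0$ (multiplicity $8$), which is austere. For the totally isoparametric claim the paper does a direct computation: writing $P$ as a $4\times4$ block matrix in the adapted orthonormal frame $\{Q_iPx,\ Q_1PN_\alpha,\ Q_1Q_jx,\ N_\alpha\}$ of $\mathbb{R}^{32}$, restricting to $T_xM_-^I$, and evaluating the characteristic polynomial, one finds the eigenvalues of $\tfrac12\mathrm{Hess}_h$ on each regular level to be $0$ (mult.\ $8$), $-2h$ (mult.\ $1$), $1-h$, $-1-h$ (mult.\ $7$ each), constant on each level set of $h$. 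This is also how the isoparametric identities $|\nabla h|^2=4(1-h^2)$ and $\Delta h=-32h$ are obtained, using that $\mathrm{Trace}\,P=0$ in the indefinite case and the explicit basis of $T_xM_-^I$; your anticipation that this is the technical crux is accurate, but it is the Hessian-eigenvalue computation (not the involution $U$) that closes the argument.
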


Let us now explain the definition of austerity that appeared
above. Recall that in a Riemannian manifold, isoparametric hypersurfaces defined by (\ref{iso1}) are a family of parallel hypersurfaces with
constant mean curvatures. Extending this concept, \cite{GTY15} defined the object \emph{k-isoparametric hypersurfaces}, whose nearby parallel hypersurfaces
have constant $i$-th mean curvatures for $i=1,\cdots, k$. In particular, \emph{totally isoparametric hypersurfaces}
are those whose nearby parallel hypersurfaces have constant principal curvatures. We would like to point out that in real space forms, an isoparametric hypersurface
is totally isoparametric, while in other spaces it is usually not the case (cf. \cite{GTY15}).

By definition, a submanifold with principal curvatures in any direction occurring in pairs of opposite signs is called an \emph{austere
submanifold} (\cite{HL82}). Clearly, austere submanifolds are minimal submanifolds.
It is worth mentioning that \cite{QT16} also constructed two sequences of totally isoparametric, austere hypersurfaces by using the Clifford algebra.
However, our examples in Proposition \ref{prop} are different from theirs.

The paper is organized as follows.
In Section \ref{sec-ineq}, we prove the normal scalar curvature inequalities (\ref{1st}, \ref{2nd}) and characterize the equality conditions, i.e.,
the sets $C_A$, $C_P$ and $C_E$, by using formulae (\ref{sff}, \ref{normalscalar}), the Simons identity (\cite{Sim68}) and some identities in \cite{OT75}.
Section \ref{sec-C_A} is devoted to the classification of the set $C_A$ where Condition A is satisfied.
The algebra of Cayley numbers (Octonions), the Clifford algebra and the techniques in \cite{FKM81} will play an important role in the proof.
In Sections \ref{sec-C_P} and \ref{sec-C_E}, we determine the sets $C_P$
and $C_E$.
There we will use the methods in \cite{TY15}, \cite{QTY13} and \cite{LZ16} where the parallelism and Einstein conditions were considered globally.
Lastly, in Section \ref{sec-prop1.2}, we prove Proposition \ref{prop},
where the relations between the focal submanifolds with $g=4$, $(8,7)$ are shown and totally isoparametric functions are constructed.

\section{Proof of the normal scalar curvature inequalities}\label{sec-ineq}

In this section, we aim at giving a proof of the normal scalar curvature inequalities (\ref{1st}, \ref{2nd}) and their equality characterizations on focal submanifolds of isoparametric hypersurfaces with $g=4$ in unit spheres.

Given a point of the focal submanifold $M_+^{m_1+2m_2}$ in the unit sphere $S^{2m_1+2m_2+1}(1)$, as we stated in the introduction, choosing a unit normal frame $\{n_0, n_1, \ldots, n_{m_1}\}$ of $M_+$, we can
express the corresponding shape operators $\{S_0, S_1,\ldots, S_{m_1}\}$ to be the form of (\ref{sff}).
Then a direct calculation leads to
\begin{equation}\label{S0Sa}
[S_0, S_{\alpha}]=S_0S_{\alpha}-S_{\alpha}S_0=\left(
  \begin{array}{ccc}
    0 & 2A_{\alpha} & B_{\alpha} \\
    -2A_{\alpha}^t & 0 & -C_{\alpha} \\
    -B_{\alpha}^t & C_{\alpha}^t & 0 \\
  \end{array}
\right),\quad 1\leq\alpha\leq m_1.
\end{equation}
By the definition $\langle S_{\alpha}, S_{\beta}\rangle=:\mathrm{Trace}(S_{\alpha}S_{\beta}^t)$, (\ref{S0Sa}) implies that
\begin{equation}\label{commutator}
  \|[S_0, S_{\alpha}]\|^2=8\|A_{\alpha}\|^2+2\|B_{\alpha}\|^2+2\|C_{\alpha}\|^2.
\end{equation}
On the other hand, as we pointed out before, the shape operator of any unit normal vector has eigenvalues $+1$, $-1$ and $0$
with multiplicities $m_2$, $m_2$ and $m_1$, respectively.
It follows immediately that $\langle S_{\alpha}, S_{\beta}\rangle=2m_2\delta_{\alpha\beta}$, in particular,
\begin{equation}\label{shapeopnorm}
\|S_{\alpha}\|^2=2(\|A_{\alpha}\|^2+\|B_{\alpha}\|^2+\|C_{\alpha}\|^2)=2m_2, \quad 1\leq\alpha\leq m_1.
\end{equation}
It follows from (\ref{commutator}) and (\ref{shapeopnorm}) that
\begin{equation*}
 2m_1m_2\leq\sum_{\alpha=1}^{m_1}\|[S_0, S_{\alpha}]\|^2\leq 8m_1m_2.
\end{equation*}
Here the first equality holds if and only if $A_{\alpha}=0$, $\forall ~1\leq \alpha\leq m_1$, and the second
holds if and only if $B_{\alpha}=C_{\alpha}=0$, $\forall~ 1\leq \alpha\leq m_1$.
Given any $\beta=0,\cdots, m_1$, we can diagonalize $S_\beta$ as $diag(I_{m_2}, -I_{m_2}, 0_{m_1})$, while the other $S_\alpha$'s ($\alpha\neq\beta$) are in the block-form of (\ref{sff}). Thus the same argument as above shows
\begin{equation*}
 2m_1m_2\leq\sum_{\alpha=0}^{m_1}\|[S_{\alpha}, S_{\beta}]\|^2\leq 8m_1m_2,
\end{equation*}
which implies the inequality (\ref{1st}) immediately:
\begin{equation*}
2m_1m_2(m_1+1)\leq \rho^{\bot}=:\sum_{\alpha,\beta=0}^{m_1}\|[S_{\alpha}, S_{\beta}]\|^2\leq 8m_1m_2(m_1+1).
\end{equation*}
Clearly, at a point where the upper bound of $\rho^{\bot}$ is achieved, the Condition A must be satisfied. Conversely,
it is not difficult to see that the upper bound of $\rho^{\bot}$ can be achieved at a point of Condition A.

We focus next on the first equality case in (\ref{1st}).
The Simons identity for minimal submanifolds $N$ in a unit sphere with codimension $p$ is stated as (cf. \cite{Sim68}, \cite{CdK})
\begin{equation}\label{Simons}
\frac{1}{2}\Delta\|S\|^2=\|\overline{\nabla}S\|^2+ \dim N\cdot\|S\|^2-\sum_{\alpha,\beta=1}^p\langle S_{\alpha}, S_{\beta}\rangle^2-\rho^{\bot},
\end{equation}
where we denote by $S$ the second fundamental form and $\overline{\nabla}S$ its covariant derivative.
In our case, on the focal submanifold $M_+$, $\dim M_+=m_1+2m_2$, $\|S\|^2=2m_2(m_1+1)$,
 the Simons identity (\ref{Simons}) reduces to
\begin{equation*}
  \|\overline{\nabla}S\|^2=\rho^{\bot}-2m_1m_2(m_1+1).
\end{equation*}
Consequently, at a point in $M_+$, the first equality in (\ref{1st}) holds if and only if the second fundamental form is parallel. We denoted the set of these points by $C_P$ in the introduction.

To derive the normal scalar curvature inequality (\ref{2nd}), we firstly recall some useful formulae in Lemma 12 of \cite{OT75}:
\begin{equation}\label{OT formula}
  S_{\alpha}=S_{\alpha}^3, \quad S_{\alpha}=S_{\beta}^2S_{\alpha}+S_{\alpha}S_{\beta}^2+S_{\beta}S_{\alpha}S_{\beta},\quad  \alpha\neq \beta.
\end{equation}
Using the formulae (\ref{shapeopnorm}, \ref{OT formula}), we are able to derive the new expression (\ref{another rho}) for the normal scalar curvature:
\begin{eqnarray*}
\rho^{\bot} &=:& \sum_{\alpha,\beta=0}^{m_1}\|[S_{\alpha}, S_{\beta}]\|^2= -\sum_{\alpha,\beta=0}^{m_1}\mathrm{Trace}(S_{\alpha}S_{\beta}-S_{\beta}S_{\alpha})^2 \\
   &=&  2\sum_{\alpha,\beta=0,\atop\alpha\neq \beta}^{m_1}\mathrm{Trace}(S_{\alpha}^2S_{\beta}^2-S_{\alpha}(S_{\alpha}-S_{\beta}^2S_{\alpha}-S_{\alpha}S_{\beta}^2))\\
   &=&  6\sum_{\alpha,\beta=0}^{m_1}\mathrm{Trace}(S_{\alpha}^2S_{\beta}^2)-6\sum_{\alpha=0}^{m_1}\mathrm{Trace}(S_{\alpha}^2)-2m_1\sum_{\alpha=0}^{m_1}\mathrm{Trace}(S_{\alpha}^2) \\
   &=&  6\|\sum_{\alpha=0}^{m_1}S_{\alpha}^2\|^2-4m_2(m_1+1)(m_1+3).
\end{eqnarray*}
As we stated before, $\mathrm{Trace}\sum_{\alpha=0}^{m_1}S_{\alpha}^2=\sum_{\alpha=0}^{m_1}\|S_{\alpha}\|^2=2m_2(m_1+1)$. By virtue of Schwartz inequality, we obtain $\|\mathrm{Trace}\sum_{\alpha=0}^{m_1}S_{\alpha}^2\|^2\leq \|\sum_{\alpha=0}^{m_1}S_{\alpha}^2\|^2\cdot\dim M_+$,
alternatively speaking,
$$\|\sum_{\alpha=0}^{m_1}S_{\alpha}^2\|^2\geq \frac{4m_2^2(m_1+1)^2}{m_1+2m_2}.$$
In this way, we get a new lower bound of $\rho^{\bot}$ as in the inequality (\ref{2nd}):
$$\rho^{\bot}\geq 2m_1m_2(m_1+1)+\frac{6m_1m_2(m_1+1)}{2m_2+m_1}(2m_2-m_1-2).$$
To see the equality case, taking advantage of the minimality of $M_+$ in $S^{n+1}(1)$ and using the Gauss equation, we can express the Ricci curvature of $X\in T_xM_+$ as:
\begin{equation}\label{Ric }
Ric(X)=(\dim M_+-1)|X|^2-\langle\sum_{\alpha=0}^{m_1}S_{\alpha}^2X, X\rangle.
\end{equation}
Therefore, the equality in Schwartz inequality holds, i.e., the equality in (\ref{2nd}) holds if and only if
the Ricci curvature is constant, i.e., the point belongs to the set $C_E$.
Obviously, if $2m_2-m_1-2>0$, the lower bound in (\ref{1st}) cannot be achieved, and the set $C_P$ is empty on $M_+$.

\section{The set $C_A$-Condition A}\label{sec-C_A}
 In this section we figure out the set $C_A$ in Table \ref{table-M+} and Table \ref{table-M-} where Condition A is satisfied, or equivalently, the normal scalar curvature $\rho^\perp$ achieves its upper bound in (\ref{1st}) on the focal submanifolds $M_{\pm}$. The proof proceeds according to the important theorem of \cite{DN83} mentioned in the introduction.

\subsection{$M_+$ with $(3, 4k)$ of OT-FKM type.}
~\vskip 0.5pt
Recall that the focal submanifold $M_+$ of OT-FKM type with multiplicities $(m, l-m-1)$ can be described as
\begin{equation}\label{M+}
  M_+=\{x\in S^{2l-1}(1)~|~\langle P_0x, x\rangle=\cdots=\langle P_mx, x\rangle=0\}.
\end{equation}
Now let $m=3$. As one knows, $\{P_0x,P_1x, P_2x, P_3x\}$ is an orthonormal basis of the normal space $T_x^{\bot}M_+$, and the corresponding shape operator is
$S_{\alpha}X=:S_{P_{\alpha}x}X=-(P_{\alpha}X)^{T}$, $\alpha=0,\cdots, 3$. More precisely,
\begin{equation*}
-S_0X =P_0X-\langle P_0X, x\rangle x-\sum_{\alpha=0}^3\langle P_0X, P_{\alpha}x\rangle P_{\alpha}x=P_0X- \sum_{\alpha=1}^3\langle X, P_0P_{\alpha}x\rangle P_{\alpha}x.
\end{equation*}
As a result,
\begin{equation*}
 S_0X=0 \Leftrightarrow P_0X\in \mathrm{Span}\{P_1x, P_2x, P_3x\}\Leftrightarrow X\in \mathrm{Span}\{P_0P_1x, P_0P_2x, P_0P_3x\}.
\end{equation*}
Analogously, by the geometric meaning of Condition A, i.e., the kernels of all shape operators coincide,  the arguments above imply that
\begin{eqnarray*}
x\in C_A &\Leftrightarrow&  ~~x\in M_+, \mathrm{and}\\
&&\mathrm{Span}\{P_0P_1x, P_0P_2x, P_0P_3x\}=\mathrm{Span}\{P_1P_0x, P_1P_2x, P_1P_3x\}\\
                          && =\mathrm{Span}\{P_2P_0x, P_2P_1x, P_2P_3x\}=\mathrm{Span}\{P_3P_0x, P_3P_1x, P_3P_2x\}.
\end{eqnarray*}
Evidently, $\langle P_0P_2x, P_1P_0x\rangle=\langle P_0P_2x, P_1P_2x\rangle=0$, and $\langle P_0P_3x, P_1P_0x\rangle=\langle P_0P_3x, P_1P_3x\rangle$ $=0$. Thus $\mathrm{Span}\{P_0P_1x, P_0P_2x, P_0P_3x\}=\mathrm{Span}\{P_1P_0x, P_1P_2x, P_1P_3x\}$ if and only if $P_0P_2x=\pm P_1P_3x$.
Analogously, we obtain a further sufficient and necessary condition of $x\in C_A$ as
\begin{equation*}
 x\in M_+~~\mathrm{and}~~ P_0P_1P_2P_3x=\pm x.
\end{equation*}
Obviously, the condition $P_0P_1P_2P_3x=\pm x$ guarantees $x\in M_+$, thus
$x\in C_A$ if and only if $x\in E_{\pm}(P_0P_1P_2P_3)\cap S^{8k+7}(1)=S^{4k+3}(1)\sqcup S^{4k+3}(1)$ (disjoint union of two connected components),
since the matrix $P_0P_1P_2P_3$ is symmetric, orthogonal with trace zero: $\mathrm{Trace}(P_0P_1P_2P_3)=\mathrm{Trace}(P_1P_2P_3P_0)=-\mathrm{Trace}(P_0P_1P_2P_3)$.
Here and throughout this paper, for a symmetric, orthogonal $2l\times 2l$ matrix $Q$ with vanishing trace, we denote by $E_{\pm}(Q)$ the eigenspaces associated with eigenvalues $\pm1$, which have dimension $l=m_1+m_2+1$, half the dimension of the Euclidean space $\mathbb{R}^{2l}$.
Up to now, we have proved:

\begin{prop}\label{3,4k}
On $M_+$ with $(3, 4k)$ of OT-FKM type, $C_A=S^{4k+3}(1)\sqcup S^{4k+3}(1)$.
\end{prop}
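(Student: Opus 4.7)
The plan is to reduce ``$x\in C_A$'' to a single algebraic identity involving the Clifford system, then read off the eigenspace picture. First I would set up the standard OT-FKM description: at $x\in M_+$, the vectors $\{P_0x,P_1x,P_2x,P_3x\}$ form an orthonormal basis of the normal space, and the shape operator in direction $P_\alpha x$ acts by $S_\alpha X=-(P_\alpha X)^{T}$. Computing the tangential projection yields
\begin{equation*}
\ker S_\alpha = \mathrm{Span}\{P_\alpha P_\beta x : \beta\neq \alpha\},
\end{equation*}
a three-dimensional subspace of $T_xM_+$. The geometric form of Condition A then says $x\in C_A$ iff all four such kernels coincide.

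Next I would exploit $P_\alpha P_\beta+P_\beta P_\alpha=2\delta_{\alpha\beta}I$ to simplify that coincidence. Many spanning vectors are automatically orthogonal across kernels (for instance $\langle P_0P_2x,P_1P_0x\rangle$ and $\langle P_0P_3x,P_1P_0x\rangle$ both vanish), so the equality $\ker S_0=\ker S_1$ reduces to a single sign relation $P_0P_2x=\pm P_1P_3x$. Multiplying through by $P_0P_1$ and using $P_\alpha^2=I$ together with the anticommutation relations, this rewrites as the single equation $P_0P_1P_2P_3\,x=\pm x$. I would then check symmetrically that each of the other pairwise kernel coincidences produces the same identity (not a stronger one), so the whole of Condition A at $x$ collapses to $Qx=\pm x$ for the operator $Q:=P_0P_1P_2P_3$.

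The final step is to analyze $Q$ itself. Using that each $P_\alpha$ is symmetric and that distinct $P_\alpha,P_\beta$ anticommute, a short manipulation gives $Q^{t}=Q$ and $Q^2=I$; cyclically permuting factors inside the trace produces $\mathrm{Trace}(Q)=\mathrm{Trace}(P_1P_2P_3P_0)=-\mathrm{Trace}(Q)=0$. Thus $Q$ is a traceless symmetric involution on $\mathbb{R}^{2l}$, so its $\pm 1$ eigenspaces $E_\pm(Q)$ each have dimension $l=m_1+m_2+1=4k+4$. One also verifies $P_\alpha Q=-QP_\alpha$ for every $\alpha$, which forces $\langle P_\alpha x,x\rangle=0$ whenever $Qx=\pm x$, so every such unit vector automatically lies in $M_+$. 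Intersecting $E_\pm(Q)$ with the unit sphere $S^{2l-1}(1)$ therefore yields exactly $C_A=S^{4k+3}(1)\sqcup S^{4k+3}(1)$.

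The main obstacle I anticipate is the sign bookkeeping in the reduction step: one must check that the sign choices coming from $\ker S_0=\ker S_1$, $\ker S_0=\ker S_2$, $\ker S_0=\ker S_3$ are mutually consistent rather than combining into a strictly stronger constraint, and that the identity coming from equating $\ker S_2$ with $\ker S_3$ is a consequence of the previous ones. Once that is in place, everything reduces to mechanical Clifford-algebra identities and the standard eigenspace count for a traceless symmetric involution.
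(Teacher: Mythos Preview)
Your proposal is correct and follows essentially the same route as the paper: identify $\ker S_\alpha=\mathrm{Span}\{P_\alpha P_\beta x:\beta\neq\alpha\}$, use the Clifford orthogonalities to reduce the kernel coincidences to $P_0P_1P_2P_3x=\pm x$, and then read off $C_A$ from the $\pm1$ eigenspaces of the traceless symmetric involution $Q=P_0P_1P_2P_3$. The sign-consistency issue you flag is harmless: each pairwise kernel equality yields a relation equivalent to $Qx=\pm x$, and since $x\neq 0$ these signs are forced to agree; the paper simply absorbs this into the word ``Analogously,'' and your observation that $P_\alpha Q=-QP_\alpha$ gives $\langle P_\alpha x,x\rangle=0$ is exactly what the paper means by ``Obviously, the condition $P_0P_1P_2P_3x=\pm x$ guarantees $x\in M_+$.''
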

\vspace{2mm}

\subsection{$M_+$ with $(1,k)$, $(4,3)$ (OT-FKM type) and $(2,2)$.}
~\vskip 0.5pt
Firstly, we recall a well-known fact that the isoparametric families with multiplicities $(2,2)$ and OT-FKM type $(1,k)$, $(4,3)$ (definite)
are homogeneous. This explains the cases $C_A=M_{\pm}$ in Table \ref{table-M+} and \ref{table-M-} for $(1,k)$ and definite $(4, 3)$ case, since the normal scalar curvature is constant in homogeneous cases.

To determine which one of, or whether both of the focal submanifolds satisfy Condition A in the $(2,2)$ case, we recall the normal scalar curvature inequalities (\ref{1st}, \ref{2nd}). Notice that now the two lower bounds in (\ref{1st}, \ref{2nd}) coincide with each other, i.e., the parallel condition (set $C_P$) coincides with the Einstein condition (set $C_E$). Notice also that the lower bound and the upper bound in (\ref{1st}) are different, thereby only one bound can be achieved on one single focal submanifold.
Fortunately, it was proved in \cite{QTY13} that the focal submanifold (say $M_-$) diffeomorphic to the oriented Grassmannian $\widetilde{G}_2(\mathbb{R}^5)$ is Einstein, while the other focal submanifold (say $M_+$) diffeomorphic to $\mathbb{C}P^3$ is not. Hence $C_P=C_E=M_-$ and $C_A=\emptyset$ on $M_-$. On the other hand, \cite{OT76} and \cite{DN83} asserted that there must be a point satisfying Condition A on focal submanifolds. Therefore, $C_A=M_+$ and $C_P=C_E=\emptyset$ on $M_+$.

Besides, as we mentioned in Remark \ref{rem1}, the indefinite $(4, 3)$ family is congruent to the $(3, 4)$ family in OT-FKM type. Thus the set $C_A$ in $M_-$ of the indefinite $(4, 3)$ family
coincides with that in $M_+$ of the $(3, 4)$ family. Namely, $C_A=S^7(1)\sqcup S^7(1)$, which was proved in Subsection 3.1. We thus get the following
\begin{prop}\label{221k43}
On the focal submanifolds with $(1,k)$, $(4,3)$ (OT-FKM type) and $(2,2)$, we have
\begin{itemize}
\item [(1)]On $M_+$ with $(1,k)$, $C_A=M_+$;
\item [(2)]On $M_-$ with $(4, 3)$ of OT-FKM type , in the definite case, $C_A=M_-$, and in the indefinite case, $C_A=S^7(1)\sqcup S^7(1)$;
\item [(3)]In the $(2,2)$ case, on the focal submanifold $M_+$ diffeomorphic to $\mathbb{C}P^3$, $C_A=M_+$ and $C_P=C_E=\emptyset$; while on the focal submanifold $M_-$ diffeomorphic to the oriented Grassmannian $\widetilde{G}_2(\mathbb{R}^5)$, $C_P=C_E=M_-$ and $C_A=\emptyset$.
\end{itemize}
\end{prop}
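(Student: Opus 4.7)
The plan is to combine the equality characterizations of (\ref{1st}) and (\ref{2nd}) established in Section \ref{sec-ineq} with homogeneity and the Dorfmeister--Neher existence theorem cited in the introduction. The families with multiplicities $(1,k)$, definite $(4,3)$, and $(2,2)$ are all homogeneous, so $\rho^{\bot}$ is constant on each of their focal submanifolds. By the equality analysis in Section \ref{sec-ineq}, this yields a sharp dichotomy on any such focal submanifold: either $C_A$ equals the whole submanifold (if the upper bound of $\rho^{\bot}$ is attained) or $C_A=\emptyset$; analogously for $C_P$ and $C_E$.

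For part (1) and the definite sub-case of part (2), the Dorfmeister--Neher theorem asserts that Condition A is satisfied at some point of $M_+$ with $(1,k)$ and of $M_-$ with definite $(4,3)$. The dichotomy above then forces $C_A = M_+$ and $C_A = M_-$, respectively.

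For the $(2,2)$ case of part (3), the crucial observation is that $2m_2-m_1-2=0$, so the two lower bounds in (\ref{1st}) and (\ref{2nd}) coincide; hence $C_P=C_E$ on each of the two focal submanifolds. I would then invoke the result of \cite{QTY13} that $M_-\cong\widetilde{G}_2(\mathbb{R}^5)$ is Einstein whereas $M_+\cong\mathbb{C}P^3$ is not. This gives $C_E=M_-$ and $C_E\cap M_+=\emptyset$, and by the coincidence of lower bounds $C_P=M_-$ as well. Since the upper and lower bounds in (\ref{1st}) are distinct, $C_A\cap M_-=\emptyset$. Finally, Dorfmeister--Neher ensures that at least one of the two focal submanifolds in the $(2,2)$ case contains a point of Condition A; as it cannot be $M_-$, one concludes $C_A=M_+$, and the dichotomy again yields $C_P=C_E=\emptyset$ on $M_+$.

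For the indefinite $(4,3)$ portion of part (2), I would apply Remark \ref{rem1}: the indefinite $(4,3)$ family of OT-FKM type is congruent to the $(3,4)$ family, and under this congruence $M_-$ of the indefinite $(4,3)$ family corresponds to $M_+$ of the $(3,4)$ family. Proposition \ref{3,4k} then yields $C_A=S^7(1)\sqcup S^7(1)$. The main obstacle in the whole argument is the $(2,2)$ case, where homogeneity alone cannot distinguish the two focal submanifolds and the external Einstein input from \cite{QTY13} is essential to locate $C_A$ on the correct side.
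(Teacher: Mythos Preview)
Your proposal is correct and follows essentially the same approach as the paper: homogeneity gives constant $\rho^{\bot}$ and hence the all-or-nothing dichotomy, the Dorfmeister--Neher existence result (together with \cite{OT76} in the paper) locates $C_A$, the coincidence $2m_2-m_1-2=0$ in the $(2,2)$ case reduces to the Einstein input from \cite{QTY13}, and the indefinite $(4,3)$ case is handled via the congruence of Remark~\ref{rem1} and Proposition~\ref{3,4k}.
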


\subsection{$M_+$ with $(7, 8k)$ of OT-FKM type.}\label{Sec4}
~\vskip 0.5pt
Given a Clifford system $\{P_0,\cdots, P_7\}$ on $R^{16k+16}$, we define $P=:P_0\cdots P_7$. It is straightforward to verify that
$P$ is symmetric with vanishing trace and $P^2=Id$.
We shall start by establishing an important assertion.
\begin{assert}\label{assert1}
 $Px=\pm x$ for $x\in C_A\subset M_+$.
\end{assert}

\begin{proof}
Using a similar argument as in Subsection 3.1, we find an equivalent condition of $x\in C_A$ as:
\begin{eqnarray}\label{7 8k}
&&~~\mathrm{Span}\{P_0P_{\alpha}x, \alpha=0,\cdots,7, \alpha\neq0\}=\mathrm{Span}\{P_1P_{\alpha}x, \alpha=0,\cdots,7, \alpha\neq1\} \\
&&= \cdots=\mathrm{Span}\{P_7P_{\alpha}x, \alpha=0,\cdots,7, \alpha\neq7\}.\nonumber
\end{eqnarray}
With the known fact that $\langle x, P_{\alpha}P_{\beta}x\rangle=0$ for any $\alpha, \beta=0,\cdots, 7, \alpha\neq\beta$, we can interpret (\ref{7 8k})
in the following way:
\begin{eqnarray}\label{7 8k'}
&&\mathrm{Span}\{P_0P_{\alpha}x, \alpha=0,\cdots,7\}=\mathrm{Span}\{P_1P_{\alpha}x, \alpha=0,\cdots,7\} \\
&&= \cdots=\mathrm{Span}\{P_7P_{\alpha}x, \alpha=0,\cdots,7\}.\nonumber
\end{eqnarray}
In other words, if $x\in C_A$, then the space $\mathrm{Span}\{P_{\alpha}P_0x, \cdots, P_{\alpha}P_7x\}$ is independent of the choice of $\alpha$.
Thus we can define $V=:\mathrm{Span}\{P_{\alpha}P_0x, \cdots, P_{\alpha}P_7x\}$. Then it follows that
\begin{eqnarray*}
 V=\mathrm{Span}\{P_{\beta}P_0x, \cdots, P_{\beta}P_7x\}&\xrightarrow{P_{\beta}}&\mathrm{Span}\{P_0x, \cdots, P_7x\}\\
 &\xrightarrow{P_{\alpha}}&\mathrm{Span}\{P_{\alpha}P_0x, \cdots, P_{\alpha}P_7x\}=V,
\end{eqnarray*}
that is, $P_{\alpha}P_{\beta}: V\rightarrow V$, $\forall~ \alpha, \beta=0,\cdots, 7$.
Since $x\in V$, we get $Px=P_0\cdots P_7x\in V$. But we notice that for $\alpha\neq 0$,
$\langle Px, P_0P_{\alpha}x\rangle=(-1)^{\alpha-1}\langle P_1\cdots \widehat{P_{\alpha}}\cdots P_7x, x\rangle=0$, because $P_1\cdots \widehat{P_{\alpha}}\cdots P_7$
is anti-symmetric. It leaves only that $Px=\pm x$, as desired.
\end{proof}

We remark that if $Px=\pm x$ for $x\in S^{16k+15}(1)$, then $\langle P_{\alpha}x, x\rangle=\pm \langle P_{\alpha}x, P_0\cdots P_7x\rangle=0$ for any $\alpha=0,\cdots,7$, thus $x\in M_+$.

Next, let us represent $x\in S^{16k+15}(1)$ by $x=(u,v)$ with $|u|^2+|v|^2=1$ and
$$u=(u_1,\cdots,u_{k+1})\in\mathbb{O}^{k+1},  ~v=(v_1,\cdots,v_{k+1})\in\mathbb{O}^{k+1}.$$
We give now a specific expression of the Clifford system on $\mathbb{R}^{16k+16}\cong\mathbb{O}^{2k+2}$, as there exists exactly
one algebraic equivalence class of Clifford system when $m=7\not\equiv0(\mathrm{mod} ~4)$:
\begin{equation}\label{clifford system}
P_0(u ,v)=(u, -v),\quad P_{\alpha}(u, v)=(E_{\alpha}v, -E_{\alpha}u), \alpha=1,\cdots,7,
\end{equation}
where $E_{\alpha}$ acts on $u$ or $v$ in this way:
$$E_{\alpha}u=(e_{\alpha}u_1,\cdots,e_{\alpha}u_{k+1}),~\alpha=1,\cdots,7,$$
and $\{1,e_1,e_2,\cdots, e_7\}$ is the standard orthonormal basis of the octonions (Cayley numbers) $\mathbb{O}$.

Let $\mathrm{Im}\mathbb{O}$ be the set of purely imaginary Cayley numbers, and $wu=:(wu_1,\cdots, wu_{k+1})$. Noticing that $P_0P_{\alpha}x=(E_{\alpha}v, E_{\alpha}u)$ ($\alpha\neq 0$), we can illustrate (\ref{7 8k}) in a simpler way as
\begin{assert}\label{assert2}
$x\in C_A$ if and only if $x\in M_+$ and $P_{\alpha}P_{\beta}x\in \{(wv, wu)~|~w\in \mathrm{Im}\mathbb{O}\}$, for $\alpha,\beta=1,...,7,\alpha\neq\beta.$
\end{assert}
\begin{assert}\label{assert3}
$P(u, v)=(v, u)$, for $u, v\in\mathbb{O}^{k+1}$.
\end{assert}

\begin{proof}
Let $1=(1, 0), e_1=(i, 0), e_2=(j, 0), e_3=(k, 0), e_4=(0, 1), e_5=(0, i), e_6=(0, j), e_7=(0, k)\in\mathbb{H}\times\mathbb{H}$.
Recalling the Cayley-Dickson construction of the product of Octonions $\mathbb{O}\cong \mathbb{H}\times\mathbb{H}$:
\begin{eqnarray*}
\mathbb{O}\times \mathbb{O}&\longrightarrow&\mathbb{O}\\
(a,b),~(c,d)&\mapsto&(a,b)\cdot (c,d) =: (ac-\bar{d}b, ~ da+b\bar{c}),
\end{eqnarray*}
 one can see easily that $e_1(e_2(\cdots(e_7x)))=-x$, $\forall~ x\in\mathbb{O}$. Then the conclusion
follows immediately from $P_0\cdots P_7(u,v)=(-E_1(\cdots(E_7v)), -E_1(\cdots(E_7u)))$.
\end{proof}

For $x\in C_A$, we can conclude from Assertions \ref{assert1} and \ref{assert3} that $u=\pm v$.
Let us focus on the case $u=v$, i.e., $x=(u , u)\in S^{16k+15}(1)$. The argument in the case $u=-v$ is analogous.
Then Assertion \ref{assert2} becomes
\begin{eqnarray*}
&&x=(u, u)\in C_A \\
&\Leftrightarrow& P_{\alpha}P_{\beta}x=(-E_{\alpha}(E_{\beta}u), -E_{\alpha}(E_{\beta}u))\in\{(w'u, w'u)~|~w'\in \mathrm{Im}\mathbb{O}\}, \forall~\alpha\neq\beta\geq 1, \\
 &\Leftrightarrow& E_{\alpha}(E_{\beta}u)=(e_{\alpha}(e_{\beta}u_1),\cdots,e_{\alpha}(e_{\beta}u_{k+1}))=(wu_1,\cdots, wu_{k+1}) ~\mathrm{for~some}~w\in \mathrm{Im}\mathbb{O},\\
&& \forall~\alpha\neq\beta\geq 1,  \\
 &\Leftrightarrow& \frac{1}{|u_i|^2}(e_{\alpha}(e_{\beta}u_i))\bar{u_i}=\frac{1}{|u_j|^2}(e_{\alpha}(e_{\beta}u_j))\bar{u_j}=w\in \mathrm{Im}\mathbb{O}~for~u_i, u_j\neq 0, i,j=1,\cdots,k+1,\\
&& ~\mathrm{for~some}~w\in \mathrm{Im}\mathbb{O}, \forall~\alpha\neq\beta\geq 1.
\end{eqnarray*}
As a matter of fact, we notice that $\mathrm{Re} ~(e_{\alpha}(e_{\beta}u_i))\bar{u_i}=\langle e_{\alpha}(e_{\beta}u_i), u_i\rangle=\langle e_{\beta}u_i, \bar{e}_{\alpha}u_i\rangle=\langle e_{\beta}, -e_{\alpha}\rangle|u_i|^2=0$, i.e., $(e_{\alpha}(e_{\beta}u_i))\bar{u_i}\in \mathrm{Im}\mathbb{O}$, for $\alpha\neq \beta\geq 1$.
To continue the investigation, we need

\begin{defn}
For two unit Cayley numbers $\sigma$, $\tau$, the pair $\{\sigma, \tau\}$ satisfies Condition X, if $(e_{\alpha}(e_{\beta}\sigma))\bar{\sigma}=(e_{\alpha}(e_{\beta}\tau))\bar{\tau}$, for any $\alpha, \beta=1,\cdots, 7, \alpha\neq \beta$.
\end{defn}
Clearly, Condition X is equivalent to that $(x(y\sigma))\bar{\sigma}=(x(y\tau))\bar{\tau}$, $\forall~x, y\in \mathbb{O}$.

\begin{defn}
For two unit Cayley numbers $\sigma$, $\tau$, the pair $\{\sigma, \tau\}$ satisfies Condition Y, if
$(x\sigma)\tau=x(\sigma\tau)$, for any $x\in \mathbb{O}$.
\end{defn}

Now we would like to list three observations:

Taking $y=\bar{\sigma}$ in the definition of Condition X, we obtain easily the following:
\begin{observ}\label{observ1}
 For $\sigma, \tau\in \mathbb{O}$ with $|\sigma|=|\tau|=1$, suppose $\{\sigma, \tau\}$ satisfies Condition X.
Then $\{\bar{\sigma}, \tau\}$ satisfies Condition Y.
\end{observ}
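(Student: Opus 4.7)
The plan is to follow the hint: simply substitute $y=\bar\sigma$ into the equivalent form of Condition X stated just before the definition, and then clean up using norm one and the alternative law for octonions.

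More precisely, first I would rewrite the hypothesis. Condition X for $\{\sigma,\tau\}$ is equivalent to
\[
(x(y\sigma))\bar\sigma \;=\; (x(y\tau))\bar\tau \qquad \text{for all } x,y\in\mathbb O,
\]
as noted immediately after the definition. Specializing $y=\bar\sigma$ and using $\bar\sigma\sigma=|\sigma|^2=1$ collapses the left-hand side to $x\bar\sigma$, yielding
\[
x\bar\sigma \;=\; \bigl(x(\bar\sigma\tau)\bigr)\bar\tau \qquad \text{for all } x\in\mathbb O.
\]

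Next, I right-multiply both sides by $\tau$. On the right, I would invoke the right alternative law of the octonions, which gives $(z\bar\tau)\tau=z(\bar\tau\tau)=z$ for any $z\in\mathbb O$ (here $z=x(\bar\sigma\tau)$ and $|\tau|=1$). The result is
\[
(x\bar\sigma)\tau \;=\; x(\bar\sigma\tau),
\]
which is precisely Condition Y for the pair $\{\bar\sigma,\tau\}$.

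There is essentially no obstacle: the only non-obvious step is the use of alternativity to collapse $\bigl(z\bar\tau\bigr)\tau$ to $z$, and this is a standard and well-known property of the octonions (any subalgebra generated by two elements, here $z$ and $\tau$, is associative, and $\bar\tau$ lies in the subalgebra generated by $\tau$). Thus the whole observation is a one-line manipulation once Condition X is written in its global (all $x,y\in\mathbb O$) form, matching the paper's description of it as an ``easily obtained'' consequence.
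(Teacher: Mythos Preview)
Your proposal is correct and follows exactly the approach indicated in the paper: the paper merely says ``Taking $y=\bar{\sigma}$ in the definition of Condition X, we obtain easily the following,'' and you have carried out precisely this substitution together with the standard alternativity argument to clean up the resulting identity.
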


Writing $\sigma=(\sigma_1, \sigma_2)\in \mathbb{H}\times\mathbb{H}$, $\tau=(\tau_1, \tau_2)\in \mathbb{H}\times\mathbb{H}$, we obtain
\begin{observ}\label{observ2}
 For $\sigma, \tau\in \mathbb{O}$ with $|\sigma|=|\tau|=1$, suppose $\{\sigma, \tau\}$ satisfies Condition X.
Then $|\sigma_1|=|\tau_1|$, $|\sigma_2|=|\tau_2|$.
\end{observ}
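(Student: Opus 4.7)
The plan is to test Condition X on a single carefully chosen pair $(x,y)$ whose associator with $\sigma$ encodes the split of $\sigma$ between the two $\mathbb{H}$-summands. Concretely, I would take $x = e_1 = (i,0)$ and $y = e_2 = (j,0)$, both lying in the first copy of $\mathbb{H}$ in the Cayley--Dickson decomposition $\mathbb{O} \cong \mathbb{H} \times \mathbb{H}$. Since $\sigma = (\sigma_1, \sigma_2)$ genuinely mixes the two factors whenever $\sigma_2 \neq 0$, the non-associativity of $\mathbb{O}$ will kick in and produce a term scaling with $|\sigma_1|^2 - |\sigma_2|^2$.

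I would then carry out three short Cayley--Dickson expansions, using $(a,b)(c,d) = (ac - \bar d b,\, da + b\bar c)$. First, $e_2\sigma = (j\sigma_1,\, \sigma_2 j)$. Second, using associativity inside $\mathbb{H}$, $e_1(e_2\sigma) = (k\sigma_1,\, -\sigma_2 k)$. Third, multiplying by $\bar\sigma = (\bar\sigma_1, -\sigma_2)$, a routine expansion gives
\begin{equation*}
(e_1(e_2\sigma))\bar\sigma \;=\; \bigl((|\sigma_1|^2 - |\sigma_2|^2)k,\; -2\sigma_2 k\sigma_1\bigr).
\end{equation*}
Condition X says the left-hand side is unchanged when $\sigma$ is replaced by $\tau$. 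Comparing the first $\mathbb{H}$-components yields $|\sigma_1|^2 - |\sigma_2|^2 = |\tau_1|^2 - |\tau_2|^2$. Combined with the unit-norm constraints $|\sigma_1|^2 + |\sigma_2|^2 = |\tau_1|^2 + |\tau_2|^2 = 1$, one concludes $|\sigma_1| = |\tau_1|$ and $|\sigma_2| = |\tau_2|$, as desired.

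The main obstacle is really just the initial choice of $(x,y)$: most choices give no information. For instance, taking $x=y$, or taking $y=1$, reduces the identity to a tautology via the alternative-law consequence $(z\sigma)\bar\sigma = z$, and taking $x = y = e_4$ likewise collapses to $-1 = -1$. The trick is to pick $x$ and $y$ lying in a common associative subalgebra ($\mathbb{H} \subset \mathbb{O}$) whose product is disjoint from the second Cayley--Dickson summand, so that the only surviving $\sigma$-dependence comes through the associator $[x,y,\sigma]\bar\sigma$, which in turn isolates the scalar $|\sigma_1|^2 - |\sigma_2|^2$.
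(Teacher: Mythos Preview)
Your proof is correct and essentially identical to the paper's: both choose $x=e_1=(i,0)$, $y=e_2=(j,0)$, compute $(e_1(e_2\sigma))\bar\sigma=\bigl((|\sigma_1|^2-|\sigma_2|^2)k,\,-2\sigma_2 k\sigma_1\bigr)$ via Cayley--Dickson, and read off $|\sigma_1|^2-|\sigma_2|^2=|\tau_1|^2-|\tau_2|^2$ from the first component. Your additional remarks on why other choices of $(x,y)$ fail are not in the paper but are helpful context.
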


\begin{proof}
Take $x=(i, 0)$, $y=(j, 0)$. Then
\begin{eqnarray*}
(x(y\sigma))\bar{\sigma} &=& (ij\sigma_1, \sigma_2ji)\bar{\sigma}=(k\sigma_1, -\sigma_2k)(\bar{\sigma}_1, -\sigma_2)=((|\sigma_1|^2-|\sigma_2|^2)k, -2\sigma_2k\sigma_1), \\
(x(y\tau))\bar{\tau} &=& (ij\tau_1, \tau_2ji)\bar{\tau}=(k\tau_1, -\tau_2k)(\bar{\tau}_1, -\tau_2)=((|\tau_1|^2-|\tau_2|^2)k, -2\tau_2k\tau_1).
\end{eqnarray*}
By the assumption that $\{\sigma, \tau\}$ satisfies Condition X, we derive from the first components of $(x(y\sigma))\bar{\sigma}$ and $(x(y\tau))\bar{\tau}$ that
$|\sigma_1|^2-|\sigma_2|^2=|\tau_1|^2-|\tau_2|^2$. Combining with $|\sigma|=|\tau|=1$, it follows easily that $|\sigma_1|=|\tau_1|$, $|\sigma_2|=|\tau_2|$.
\end{proof}

\begin{observ}\label{observ3}
 For unit $\sigma=(\sigma_1, \sigma_2)\in \mathbb{O}$ with $\sigma_2\neq 0$, and unit $\tau=(\tau_1, \tau_2)\in \mathbb{O}$,
the pair $\{\sigma, \tau\}$ satisfies Condition Y if and only if $\mathrm{Im}~ \sigma \sslash  \mathrm{Im}~ \tau$, where $\mathrm{Im}~ \sigma$ and $\mathrm{Im}~ \tau$ stand for the imaginary parts of $\sigma$ and $\tau$, respectively.
\end{observ}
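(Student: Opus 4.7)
Plan: I would prove the two implications separately, using the alternative law of the octonions. The $(\Leftarrow)$ direction is short and handled by a one-line expansion; the $(\Rightarrow)$ direction requires a Cayley--Dickson reduction together with the non-associativity of $\mathbb{O}$.

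For $(\Leftarrow)$: Suppose $\mathrm{Im}\,\sigma \sslash \mathrm{Im}\,\tau$. Since $\sigma_2\neq 0$ forces $\mathrm{Im}\,\sigma\neq 0$, I may write $\tau = a + b\sigma$ for some $a,b\in\mathbb{R}$. The alternative law $(x\sigma)\sigma = x\sigma^2$ holds in $\mathbb{O}$, so by $\mathbb{R}$-bilinearity
\[ (x\sigma)\tau = a(x\sigma) + b(x\sigma)\sigma = a(x\sigma) + bx\sigma^2 = x(a\sigma + b\sigma^2) = x(\sigma\tau), \]
which is Condition Y.

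For $(\Rightarrow)$, I argue the contrapositive: assume $\mathrm{Im}\,\sigma$ and $\mathrm{Im}\,\tau$ are linearly independent, and exhibit $x\in\mathbb{O}$ with $(x\sigma)\tau \neq x(\sigma\tau)$. Because $\mathbb{R}$ lies in the nucleus of $\mathbb{O}$, the associator $[x,\sigma,\tau]$ depends only on the imaginary parts of its arguments, so I may replace $\sigma,\tau$ by $\mathrm{Im}\,\sigma, \mathrm{Im}\,\tau$ and assume both are purely imaginary. Artin's theorem (alternativity) implies that the subalgebra generated by $\sigma,\tau$ is associative, and the $\mathbb{R}$-linear independence of $1,\sigma,\tau$ identifies it with a quaternion subalgebra $\mathbb{H}' = \mathrm{span}(1,\sigma,\tau,\sigma\tau)\subset\mathbb{O}$. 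The Cayley--Dickson structure then furnishes a unit $\ell\in\mathbb{O}$ with $\ell\perp\mathbb{H}'$ and the doubling identities $(\ell p)q = \overline{qp}\,\ell$ and $\ell(pq) = \overline{pq}\,\ell$ for all $p,q\in\mathbb{H}'$. Setting $x=\ell$ I compute
\[ [\ell,\sigma,\tau] \;=\; \bigl(\overline{\tau\sigma}-\overline{\sigma\tau}\bigr)\ell \;=\; (\sigma\tau-\tau\sigma)\,\ell \;=\; 2(\sigma\times\tau)\,\ell, \]
where the middle equality uses $\overline{\sigma}=-\sigma$, $\overline{\tau}=-\tau$ and that $\sigma\tau-\tau\sigma\in\mathrm{Im}\,\mathbb{H}'$. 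Two linearly independent imaginary quaternions satisfy $\sigma\times\tau\neq 0$, so this associator is nonzero and Condition Y fails at $x=\ell$.

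The main obstacle is invoking the Cayley--Dickson decomposition $\mathbb{O} = \mathbb{H}'\oplus \mathbb{H}'\ell$ relative to the \emph{arbitrary} quaternion subalgebra $\mathbb{H}'$ generated by $\sigma$ and $\tau$, rather than the distinguished copy $\mathbb{H}\subset\mathbb{O}$ fixed in the paper's notation. This is a standard consequence of the theory of composition algebras (any quaternion subalgebra of $\mathbb{O}$ admits such an orthogonal doubling), and I would cite it rather than reprove it; granted that step, everything else is direct computation.
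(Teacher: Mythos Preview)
Your argument is correct and takes a genuinely different route from the paper's.  The paper fixes the standard decomposition $\mathbb{O}=\mathbb{H}\times\mathbb{H}$, writes out $(x\sigma)\tau$ and $x(\sigma\tau)$ component by component, and reduces Condition~Y to four quaternionic identities in $x_1,x_2$; analyzing these (using $\sigma_2\neq0$ to make $\bar\sigma_2x_2$ surjective onto~$\mathbb{H}$) yields $\tau_2=\lambda\sigma_2$ and $\tau_1+\lambda\bar\sigma_1\in\mathbb{R}$, which is exactly $\mathrm{Im}\,\sigma\sslash\mathrm{Im}\,\tau$.  Your approach is structural: alternativity handles the forward direction in one line, and for the converse you pass to the quaternion subalgebra $\mathbb{H}'$ generated by the imaginary parts and pick $\ell\perp\mathbb{H}'$ to exhibit a nonzero associator.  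The trade-off is clear: the paper's proof is completely self-contained but tied to the ambient coordinates, while yours is cleaner and in fact shows the hypothesis $\sigma_2\neq0$ is inessential (only $\mathrm{Im}\,\sigma\neq0$ matters), at the cost of invoking the Cayley--Dickson doubling relative to an arbitrary quaternion subalgebra of~$\mathbb{O}$.  That last fact is standard (it follows from $G_2$ acting transitively on such pairs, or from the general theory of composition algebras), so citing it is reasonable; your verification of the doubling identities $(\ell p)q=\overline{qp}\,\ell$ and $\ell(pq)=\overline{pq}\,\ell$ is consistent with the paper's Cayley--Dickson convention $(a,b)(c,d)=(ac-\bar db,\,da+b\bar c)$.
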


\begin{proof}
Take an arbitrary Cayley number $x=(x_1, x_2)\in \mathbb{H}\times\mathbb{H}.$ We calculate
\begin{eqnarray*}
(x\sigma)\tau &=& (x_1\sigma_1-\bar{\sigma}_2x_2, \sigma_2x_1+x_2\bar{\sigma}_1)(\tau_1, \tau_2)\\
&=&((x_1\sigma_1-\bar{\sigma}_2x_2)\tau_1-\bar{\tau}_2(\sigma_2x_1+x_2\bar{\sigma}_1), \tau_2(x_1\sigma_1-\bar{\sigma}_2x_2)+(\sigma_2x_1+x_2\bar{\sigma_1})\bar{\tau}_1) \\
x(\sigma\tau) &=& (x_1, x_2)(\sigma_1\tau_1-\bar{\tau}_2\sigma_2, \tau_2\sigma_1+\sigma_2\bar{\tau}_1)\\
&=& (x_1(\sigma_1\tau_1-\bar{\tau}_2\sigma_2)-\overline{\tau_2\sigma_1+\sigma_2\bar{\tau}_1}x_2, (\tau_2\sigma_1+\sigma_2\bar{\tau}_1)x_1+x_2\overline{\sigma_1\tau_1-\bar{\tau}_2\sigma_2}).
\end{eqnarray*}
Then the pair $\{\sigma, \tau\}$ satisfies Condition Y, i.e., $(x\sigma)\tau=x(\sigma\tau)$, for arbitrary $x=(x_1, x_2)\in \mathbb{H}\times\mathbb{H}$, if and only if
\begin{eqnarray}
&&\bar{\tau}_2\sigma_2x_1=x_1\bar{\tau}_2\sigma_2, \quad \forall x_1\in\mathbb{H}, \label{1}\\
&& \bar{\sigma}_2x_2\tau_1+\bar{\tau}_2x_2\bar{\sigma}_1=\bar{\sigma}_1\bar{\tau}_2x_2+\tau_1\bar{\sigma}_2 x_2, \quad \forall x_2\in\mathbb{H}, \label{2}\\
&&\tau_2x_1\sigma_1+\sigma_2x_1\bar{\tau}_1=\tau_2\sigma_1x_1+\sigma_2\bar{\tau}_1x_1, \quad \forall x_1\in\mathbb{H}, \label{3}\\
&&x_2\bar{\sigma}_1\bar{\tau}_1-\tau_2\bar{\sigma}_2x_2=x_2\bar{\tau}_1\bar{\sigma}_1-x_2\bar{\sigma}_2\tau_2, \quad \forall x_2\in\mathbb{H}, \label{4}
\end{eqnarray}
Recall the fact that a quaternionic number which commutes with
all quaternionic numbers must be a real number. Thus
$$(\ref{1}) \Leftrightarrow \tau_2=\lambda\sigma_2, ~\mathrm{for~ some~ }\lambda\in\mathbb{R}.$$
Substituting $\tau_2=\lambda\sigma_2$ into (\ref{2}), we get $(\bar{\sigma}_2x_2)(\tau_1+\lambda\bar{\sigma}_1)=(\tau_1+\lambda\bar{\sigma}_1)(\bar{\sigma}_2x_2)$, $\forall~ x_2\in \mathbb{H}$. Notice that $\bar{\sigma}_2x_2$ achieves all the quaternionic numbers, since $\sigma_2\neq 0$. Thus
$$(\ref{1}), (\ref{2}) \Leftrightarrow  \tau_2=\lambda\sigma_2, ~\mathrm{for ~some}~ \lambda\in \mathbb{R}, ~\mathrm{and}~ \mu=:\tau_1+\lambda\bar{\sigma}_1\in\mathbb{R}. $$
Therefore, if (\ref{1}), (\ref{2}) hold, then (\ref{3}) and (\ref{4}) are satisfied automatically.
Summarizing the above, we have arrived at the conclusion as desired, that
\begin{eqnarray*}
&&\{\sigma, \tau\}~\mathrm{with}~\sigma_2\neq 0~ \mathrm{satisfies~ Condition~ Y}\\
&\Leftrightarrow& (\tau_1, \tau_2)=(\mu-\lambda\bar{\sigma}_1, \lambda\sigma_2), ~\mathrm{for ~some}~\lambda,\mu\in\mathbb{R} \\
&\Leftrightarrow& \mathrm{Im} \sigma \sslash  \mathrm{Im} \tau.
\end{eqnarray*}
\end{proof}

With all these preparations, we are in a position to characterize those points satisfying Condition X.
\begin{lem}\label{Cond X}
For unit Cayley numbers $\sigma, \tau\in \mathbb{O}$, the pair $\{\sigma, \tau\}$ satisfies Condition X, if and only if $\sigma=\pm\tau$.
\end{lem}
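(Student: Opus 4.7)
The forward direction follows directly from the definition, since substituting $\tau = \pm\sigma$ into Condition X gives the same expression on both sides. For the converse, the plan is to combine the three preceding observations to narrow $\tau$ to a finite list of candidates, then eliminate the spurious ones via Condition X evaluated on carefully chosen arguments $(x,y)$.

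First I would dispose of the edge case $\sigma_2 = 0$. Observation \ref{observ2} forces $\tau_2 = 0$, so $\sigma, \tau$ both lie in the associative subalgebra $\mathbb{H} \subset \mathbb{O}$. Specializing Condition X to $y = e_4 = (0,1)$ and general $x = (a,b) \in \mathbb{H} \times \mathbb{H}$, and expanding via the Cayley--Dickson product, the identity collapses to $\sigma_1\, b\, \bar\sigma_1 = \tau_1\, b\, \bar\tau_1$ for all $b \in \mathbb{H}$. Since $|\sigma_1| = |\tau_1| = 1$, both sides represent the same rotation of $\mathrm{Im}\,\mathbb{H}$, which forces $\tau_1 = \pm\sigma_1$ and hence $\tau = \pm\sigma$.

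Now assume $\sigma_2 \neq 0$. Observation \ref{observ1} gives that $\{\bar\sigma, \tau\}$ satisfies Condition Y, and since $(\bar\sigma)_2 = -\sigma_2 \neq 0$, Observation \ref{observ3} applies to yield $\tau_1 = \mu - \lambda\sigma_1$ and $\tau_2 = -\lambda\sigma_2$ for some $\lambda, \mu \in \mathbb{R}$. The norm equalities from Observation \ref{observ2} pin $\lambda = \pm 1$ and $\mu \in \{0,\, \pm 2\,\mathrm{Re}\,\sigma\}$, narrowing $\tau$ to the four candidates $\{\pm\sigma,\, \pm\bar\sigma\}$.

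The main obstacle is ruling out $\tau = \pm\bar\sigma$ except in the degenerate case $\bar\sigma = \pm\sigma$ (i.e.\ $\sigma \in \mathbb{R} \cup \mathrm{Im}\,\mathbb{O}$, where $\pm\bar\sigma$ already coincides with $\pm\sigma$). Assuming $\tau = \bar\sigma$, expanding Condition X with $x = (a,b)$ and $y = e_4$ and equating the two Cayley--Dickson components, the $a = 0$ slice yields $\sigma_1 b\bar\sigma_1 = \bar\sigma_1 b\sigma_1$ for every $b \in \mathbb{H}$. The classical fact that conjugation by a unit quaternion equals its inverse conjugation precisely when the quaternion lies in $\mathbb{R} \cup \mathrm{Im}\,\mathbb{H}$ confines $\sigma_1$ to $\mathbb{R} \cup \mathrm{Im}\,\mathbb{H}$. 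Further specializations (for instance $y = e_1$ with $x = (a,b)$, or $x = e_1, y = e_2$) produce mixed terms such as $2\,\mathrm{Re}(\sigma_1)\sigma_2[i,a]$ whose vanishing for all $a$ forces either $\sigma_1 \in \mathrm{Im}\,\mathbb{H}$ or $\sigma_2 = 0$; iterating the symmetric specializations to constrain $\sigma_2$ analogously, one confines $\sigma$ to $\mathbb{R} \cup \mathrm{Im}\,\mathbb{O}$, contradicting the assumption $\bar\sigma \neq \pm\sigma$. The case $\tau = -\bar\sigma$ is handled symmetrically. The real difficulty throughout is the non-associativity of $\mathbb{O}$, which prevents any formal shortcut and forces the argument to proceed by explicit Cayley--Dickson bookkeeping.
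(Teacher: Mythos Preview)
Your overall architecture matches the paper's: both proofs use Observations~\ref{observ1}--\ref{observ3} to reduce Case~2 to $\tau\in\{\pm\sigma,\pm\bar\sigma\}$, and both handle Case~1 by a direct quaternionic computation. Your Case~1 argument via $y=e_4$ is correct and slightly slicker than the paper's (which expands $(x(y\sigma))\bar\sigma$ for general $x,y$ and reads off $\bar\sigma_1\tau_1\in\mathbb{R}$).

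The gap is in Case~2, eliminating $\tau=\bar\sigma$ when $\sigma_2\neq0$ and $\mathrm{Re}\,\sigma\neq0$. Your first specialization ($a=0$, $y=e_4$) does yield $\sigma_1 b\bar\sigma_1=\bar\sigma_1 b\sigma_1$, hence $\sigma_1\in\mathbb{R}\cup\mathrm{Im}\,\mathbb{H}$. But from there your argument becomes hand-wavy: you assert that ``further specializations\ldots produce mixed terms such as $2\,\mathrm{Re}(\sigma_1)\sigma_2[i,a]$'' and then speak of ``iterating the symmetric specializations to constrain $\sigma_2$ analogously.'' Neither the form of these terms nor the logic of the iteration is substantiated, and constraining $\sigma_2$ is not what is needed---you need precisely $\mathrm{Re}\,\sigma_1=0$. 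This step is the heart of the lemma and cannot be left as a sketch.

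The paper avoids this brute-force route entirely with a clean algebraic trick: writing $\lambda:=2\,\mathrm{Re}\,\sigma\neq0$ so that $\bar\sigma=\lambda-\sigma$, one expands
\[
(x(y\bar\sigma))\sigma=(x(y(\lambda-\sigma)))(\lambda-\bar\sigma)=\lambda^2 xy-\lambda(xy)\bar\sigma-\lambda x(y\sigma)+(x(y\sigma))\bar\sigma,
\]
so Condition~X for $\tau=\bar\sigma$ reduces (after dividing by $\lambda$) to $x(y\bar\sigma)=(xy)\bar\sigma$ for all $x,y\in\mathbb{O}$. This is the vanishing of the associator $[x,y,\bar\sigma]$, which fails immediately for $x=(i,0)$, $y=(j,0)$ whenever $\sigma_2\neq0$. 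This single observation replaces your entire chain of specializations and should be used instead.
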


\begin{proof} Clearly, the pair $\{\sigma, \tau\}=\{\sigma, \pm\sigma\}$ satisfies Condition X. So we need only to prove the opposite side.

Suppose $\{\sigma, \tau\}$ satisfies Condition X. Expressing $\sigma, \tau\in \mathbb{O}$ as $\sigma=(\sigma_1, \sigma_2), \tau=(\tau_1, \tau_2)\in \mathbb{H}\times\mathbb{H}$, we divide the proof into two cases:

\noindent
\textbf{Case 1:} $\sigma_2=0$. By Observation \ref{observ2}, we have $|\tau_1|=|\sigma_1|$, $|\tau_2|=|\sigma_2|=0$, thus $\tau_2=0$. For any $x=(x_1, x_2), y=(y_1, y_2)\in \mathbb{H}\times\mathbb{H}$,
\begin{eqnarray*}
  (x(y\sigma))\bar{\sigma} &=& (x_1y_1|\sigma_1|^2-\sigma_1\bar{y}_1x_2\bar{\sigma}_1,~ y_1\bar{\sigma}_1x_1\sigma_1+x_2\bar{\sigma}_1\bar{y}_1\sigma_1), \\
  (x(y\tau))\bar{\tau} &=& (x_1y_1|\tau_1|^2-\tau_1\bar{y}_1x_2\bar{\tau}_1,~ y_1\bar{\tau}_1x_1\tau_1+x_2\bar{\tau}_1\bar{y}_1\tau_1).
\end{eqnarray*}
Thus $(x(y\sigma))\bar{\sigma}=(x(y\tau))\bar{\tau}$, $\forall~ x, y\in\mathbb{O}$ if and only if
\begin{eqnarray}\label{case1}
&& \sigma_1\bar{y}_1x_2\bar{\sigma}_1=\tau_1\bar{y}_1x_2\bar{\tau}_1, \\
&& \bar{\sigma}_1x_1\sigma_1=\bar{\tau}_1x_1\tau_1, ~~\bar{\sigma}_1\bar{y}_1\sigma_1=\bar{\tau}_1\bar{y}_1\tau_1, \forall~ x_1, x_2, y_1\in\mathbb{H}.\nonumber
\end{eqnarray}
From the arbitrary choices of $x_1, x_2$ and $y_1$, it follows that (\ref{case1}) is equivalent to
$\sigma_1z\bar{\sigma}_1=\tau_1z\bar{\tau}_1$ and $\bar{\sigma}_1z'\sigma_1=\bar{\tau}_1z'\tau_1$, for any $z, z'\in \mathbb{H}$, that is, $\bar{\sigma}_1\tau_1\in\mathbb{R}$.
Alternatively speaking, $\tau_1=\pm\sigma_1$, thus $\tau=\pm\sigma$.

\noindent
\textbf{Case 2:} $\sigma_2\neq0$. From Observations \ref{observ1} and \ref{observ3}, it follows that $\mathrm{Im} \bar{\sigma}\sslash \mathrm{Im} \tau$. Then combining with Observation \ref{observ2}, we obtain that
$$(\sigma, \tau)=(\sigma, \pm\sigma) ~or~(\sigma, \pm\bar{\sigma}).$$
Obviously, the pair $\{\sigma, \pm\sigma\}$ satisfies Condition X, so we are left to consider the case $\tau=\pm\bar{\sigma}$.
If $\mathrm{Re}~\sigma=0$, we have $\bar{\sigma}=-\sigma$, thus the pair $\{\sigma, \tau\}=\{\sigma, \pm\bar{\sigma}\}=\{\sigma, \mp\sigma\}$ satisfies Condition X.
If $\mathrm{Re}~\sigma\neq0$, we have $\sigma+\bar{\sigma}=2\mathrm{Re} \sigma=:\lambda\neq0\in\mathbb{R}$.
Here we take $\tau=\bar{\sigma}$, since the proof for $\tau=-\bar{\sigma}$ is analogous. A direct calculation leads to
\begin{eqnarray*}
(x(y\bar{\sigma}))\sigma&=&(x(y(\lambda-\sigma)))(\lambda-\bar{\sigma})=(\lambda xy-x(y\sigma))(\lambda-\bar{\sigma})\\
&=&\lambda^2xy-\lambda(xy)\bar{\sigma}-\lambda x(y\sigma)+(x(y\sigma))\bar{\sigma}, ~\mathrm{for}~ x, y\in\mathbb{O}.
\end{eqnarray*}
Thus the pair $\{\sigma, \tau\}=\{\sigma, \bar{\sigma}\}$ satisfies Condition X, if and only if
$$\lambda xy-(xy)\bar{\sigma}- x(y\sigma)=0,~\forall~ x, y\in\mathbb{O},$$
which is equivalent to $$\lambda xy+x(y\bar{\sigma})=(xy)\bar{\sigma}+x(y(\sigma+\bar{\sigma}))=(xy)\bar{\sigma}+\lambda xy, ~\forall~ x, y\in\mathbb{O}.$$
Namely, $$x(y\bar{\sigma})=(xy)\bar{\sigma}, ~\forall~ x, y\in\mathbb{O}.$$
However, for $\sigma_2\neq 0$, there do exist $x, y\in\mathbb{O}$ such that $x(y\bar{\sigma})\neq(xy)\bar{\sigma}$.
For example, let $x=(i, 0)$ and $y=(j, 0)$. Then $x(y\bar{\sigma})=(k\bar{\sigma}_1, \sigma_2k)$, while $(xy)\bar{\sigma}=(k\bar{\sigma}_1, -\sigma_2k)$.
Therefore, the pair $\{\sigma, \tau\}=\{\sigma, \bar{\sigma}\}$ does not satisfy Condition X.

Summarizing all the results above, if the pair $\{\sigma, \tau\}$ satisfies Condition X, we can obtain $\tau=\pm\sigma,$ as desired.
\end{proof}

Recall that by Assertions \ref{assert1} and \ref{assert2}, $x=(u, v)\in C_A$ if and only if $u=\pm v$, and for any two non-vanishing components $u_i$, $u_j$ of $u$,
the pair $\{\frac{u_i}{|u_i|}, \frac{u_j}{|u_j|}\}$ satisfies Condition X.
Thus Lemma \ref{Cond X} yields that $\frac{u_i}{|u_i|}=\pm \frac{u_j}{|u_j|}$, which leads to the following corollary:

\begin{cor}
On the focal submanifold $M_+$ of OT-FKM type with $(7, 8k)$, $x=(u, v)\in C_A$ if and only if there exist $\sigma\in\mathbb{O}$ with $|\sigma|=1$, and
$\lambda_1,\cdots, \lambda_{k+1}\in \mathbb{R}$ with $\sum_{i=1}^{k+1}\lambda_i^2=1$, such that $u=\pm v=\frac{1}{\sqrt{2}}(\lambda_1\sigma, \cdots, \lambda_{k+1}\sigma)$.
\end{cor}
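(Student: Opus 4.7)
The plan is to read the corollary as an essentially formal consequence of Assertions \ref{assert1}, \ref{assert2}, \ref{assert3} and Lemma \ref{Cond X}; no new identity in $\mathbb{O}$ is needed. First I would apply Assertions \ref{assert1} and \ref{assert3}: for $x=(u,v)\in C_A$ one has $Px=\pm x$, and since $P(u,v)=(v,u)$ this forces $u=\pm v$. So it suffices to describe the admissible $u\in\mathbb{O}^{k+1}$ (with $|u|^2=1/2$).

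Second, I would unpack the Condition X requirement from Assertion \ref{assert2}. The discussion just preceding Lemma \ref{Cond X} shows that $x\in C_A$ translates into the statement that, for every pair of nonzero components $u_i,u_j$, the normalized pair $\{u_i/|u_i|,\,u_j/|u_j|\}\subset\mathbb{O}$ satisfies Condition X. Lemma \ref{Cond X} then gives $u_i/|u_i|=\pm u_j/|u_j|$. Fixing one nonzero component, say $u_{i_0}$, and setting $\sigma=u_{i_0}/|u_{i_0}|$, every nonzero $u_i$ is a real scalar multiple of $\sigma$; for zero components we simply take the corresponding scalar to be $0$. Writing $u_i=\mu_i\sigma$ with $\mu_i\in\mathbb{R}$, the condition $|x|^2=1$ together with $u=\pm v$ yields $2\sum_i\mu_i^2=1$; setting $\lambda_i=\sqrt{2}\,\mu_i$ produces the claimed normal form
\[
u=\pm v=\tfrac{1}{\sqrt{2}}(\lambda_1\sigma,\dots,\lambda_{k+1}\sigma),\qquad \sum_{i=1}^{k+1}\lambda_i^2=1.
\]

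For the converse, I would verify directly that any $x=(u,\pm u)$ of this form lies in $C_A$. The remark following Assertion \ref{assert1} already ensures $x\in M_+$ once $Px=\pm x$, which is immediate from Assertion \ref{assert3} applied to $u=\pm v$. Then all nonzero unit components of $u$ equal $\pm\sigma$, so every pair $\{u_i/|u_i|,u_j/|u_j|\}$ trivially satisfies Condition X, and Assertion \ref{assert2} gives $x\in C_A$.

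There is no real obstacle here: all nontrivial work is bundled inside Lemma \ref{Cond X} and Assertions \ref{assert1}--\ref{assert3}. The only small care I would take is bookkeeping the rescaling factor $\sqrt{2}$ when passing from $|u|^2=1/2$ to the stated constraint $\sum\lambda_i^2=1$, and handling the case of vanishing components uniformly by allowing the $\lambda_i$'s to be zero.
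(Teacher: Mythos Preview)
Your proposal is correct and follows exactly the paper's approach: the corollary is stated immediately after the sentence ``Thus Lemma \ref{Cond X} yields that $\frac{u_i}{|u_i|}=\pm \frac{u_j}{|u_j|}$, which leads to the following corollary,'' so the paper's proof is precisely the combination of Assertions \ref{assert1}, \ref{assert2}, \ref{assert3} with Lemma \ref{Cond X} that you outline. If anything, you are slightly more explicit than the paper in spelling out the converse direction and the $\sqrt{2}$ normalization.
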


\begin{prop}\label{7,8k}
$C_A$ of the focal submanifold $M_+$ of OT-FKM type with $(7, 8k)$ is isometric to
$$(S^k(1)\times S^7(1))/\mathbb{Z}_2\sqcup(S^k(1)\times S^7(1))/\mathbb{Z}_2,$$
where we identify $((\lambda_1, \cdots, \lambda_{k+1}), \sigma)$ with $((-\lambda_1, \cdots, -\lambda_{k+1}), -\sigma)$ in $S^k(1)\times S^7(1)$.
\end{prop}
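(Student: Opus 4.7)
The plan is to read off $C_A$ directly from the explicit parametrization supplied by the preceding corollary. Define two smooth maps
\[
\Phi_\pm : S^k(1)\times S^7(1)\longrightarrow S^{16k+15}(1),\qquad \bigl((\lambda_1,\ldots,\lambda_{k+1}),\sigma\bigr) \longmapsto \tfrac{1}{\sqrt{2}}\bigl(u,\pm u\bigr)
\]
with $u=(\lambda_1\sigma,\ldots,\lambda_{k+1}\sigma)$. A short verification places each image inside $M_+$: by Assertion \ref{assert3} we have $P(u,\pm u)=(\pm u,u)=\pm(u,\pm u)$, so $Px=\pm x$, and the remark after Assertion \ref{assert1} then gives $x\in M_+$. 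By the corollary, $\mathrm{Im}\,\Phi_+\cup\mathrm{Im}\,\Phi_-=C_A$.

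Next, I would show that the two images are disjoint and that each parametrization factors through the stated $\mathbb{Z}_2$-quotient. Disjointness is immediate: since $|u|^2=\tfrac12\sum\lambda_i^2=\tfrac12\neq 0$, no point can simultaneously satisfy $u=v$ and $u=-v$, so $C_A=\mathrm{Im}\,\Phi_+\sqcup\mathrm{Im}\,\Phi_-$. For the fibres of a single $\Phi_\pm$, suppose $\Phi_\pm(\lambda,\sigma)=\Phi_\pm(\lambda',\sigma')$, and choose an index $i_0$ with $\lambda_{i_0}\neq 0$. Equating the $i_0$-th blocks gives $\lambda_{i_0}\sigma=\lambda_{i_0}'\sigma'$, and taking norms in $\mathbb{O}$ forces $|\lambda_{i_0}'|=|\lambda_{i_0}|$, so $\sigma'=\varepsilon\sigma$ for a single $\varepsilon=\pm 1$. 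Comparing the remaining coordinates with this fixed $\varepsilon$ then yields $\lambda_j'=\varepsilon\lambda_j$ for every $j$. Hence the only identification is $(\lambda,\sigma)\sim(-\lambda,-\sigma)$, and $\Phi_\pm$ descends to a bijection $\overline\Phi_\pm : \Omega_k \to \mathrm{Im}\,\Phi_\pm$, where $\Omega_k=(S^k(1)\times S^7(1))/\mathbb{Z}_2$.

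Finally, I would upgrade $\overline\Phi_\pm$ from a bijection to an isometry by a direct differential computation in the ambient Euclidean space. For $(\dot\lambda,\dot\sigma)\in T_\lambda S^k\oplus T_\sigma S^7$, one has $d\Phi_\pm(\dot\lambda,\dot\sigma)=\tfrac{1}{\sqrt{2}}(\dot u,\pm\dot u)$ with $\dot u_i=\dot\lambda_i\sigma+\lambda_i\dot\sigma$, whence
\[
|d\Phi_\pm(\dot\lambda,\dot\sigma)|^2 \;=\; \sum_{i=1}^{k+1}\!\bigl(\dot\lambda_i^2|\sigma|^2 + 2\dot\lambda_i\lambda_i\langle\sigma,\dot\sigma\rangle + \lambda_i^2|\dot\sigma|^2\bigr) \;=\; |\dot\lambda|^2+|\dot\sigma|^2,
\]
using $|\sigma|=1$, the constraint $\langle\sigma,\dot\sigma\rangle=0$, and $\sum\lambda_i^2=1$. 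Since the induced metric on $M_+\subset S^{16k+15}(1)$ is just the restriction of the Euclidean metric, this shows $\Phi_\pm$ pulls it back to the standard product metric, and passing to the $\mathbb{Z}_2$-quotient gives the desired isometry $\Omega_k\sqcup\Omega_k \cong C_A$. I expect the only delicate point to be the fibre analysis, specifically the need to extract one uniform sign $\varepsilon$ from all coordinates rather than coordinate-dependent signs, but the norm computation in $\mathbb{O}$ handles this cleanly.
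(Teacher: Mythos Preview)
Your proof is correct and follows essentially the same approach as the paper: both define the parametrizing map from $S^k(1)\times S^7(1)$ into $C_A$, observe the $\mathbb{Z}_2$-identification $(\lambda,\sigma)\sim(-\lambda,-\sigma)$, and check that the map is an isometry. The paper's proof is much terser---it simply writes down the map $\Psi$, notes the identification, and declares the isometry verification ``straightforward''---whereas you carry out explicitly the fibre analysis and the differential computation that the paper leaves to the reader.
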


\begin{proof}
We denote $C_A$ by $C_A=C_A^+\sqcup C_A^-$, where $C_A^+$ is the component with $x=(u, u)$ and $C_A^-$ is the component with $x=(u, -u)$.
As in \cite{TY13}, we define a map
\begin{eqnarray*}
\Psi: S^k(1)\times S^7(1) &\rightarrow& C_A^+\subset \mathbb{R}^{16k+16}\\
(\lambda_1, \cdots, \lambda_{k+1}), \sigma &\mapsto& \frac{1}{\sqrt{2}}(\lambda_1\sigma, \cdots, \lambda_{k+1}\sigma, \lambda_1\sigma, \cdots, \lambda_{k+1}\sigma)
\end{eqnarray*}
It satisfies $\Psi((\lambda_1, \cdots, \lambda_{k+1}), \sigma)=\Psi((-\lambda_1, \cdots, -\lambda_{k+1}), -\sigma)$.
The verification that $\Psi$ is an isometry is straightforward.
\end{proof}

\begin{rem}
Let $\eta$ be the Hopf line bundle over the real projective space $\mathbb{R}P^7$. As is well known, the Grothendieck ring $\widetilde{KO}(\mathbb{R}P^7)$
is cyclic of order $8$ with generator $\eta-\textbf{1}$ (\cite{Hus75}). Observe that $(S^k\times S^7)/\mathbb{Z}_2$ (compare \cite{TXY12}) is diffeomorphic to the
total space of the sphere bundle of $(k+1)\eta$. Hence $(S^k\times S^7)/\mathbb{Z}_2$ is diffeomorphic to $S^k\times\mathbb{R}P^7$ provided that $8$ divides
$k+1$.
\end{rem}

\subsection{$M_-$ with $(8, 7)$ of OT-FKM type.}\label{subsec-87}
~\vskip 0.5pt
In this subsection, we want to describe the set $C_A$ in $M_-$ of OT-FKM type with $(8, 7)$.

By the definition of the isoparametric polynomial (\ref{FKM isop. poly.}), following \cite{FKM81}, one has
\begin{eqnarray*}
M_-&=&F^{-1}(-1)\cap S^{31}(1) =  \{x\in S^{31}(1)~|~\sum_{\alpha=0}^8\langle P_{\alpha}x, x\rangle^2=1\}\\
   &=& \{x\in S^{31}(1)~|~\mathrm{there~exists}~Q_0\in\Sigma(P_0,\cdots,P_8) ~\mathrm{with} ~Q_0x=x\},
\end{eqnarray*}
where $\Sigma(P_0,\cdots, P_8)$ is the unit sphere in $\mathrm{Span}\{P_0,\cdots,P_8\}$, which is called the Clifford sphere.
Now given $x \in M_-$ and $Q_0\in \Sigma(P_0,\cdots,P_8)$ with $Q_0x=x$, we define
$$\Sigma_{Q_0}=:\{Q\in \Sigma(P_0,\cdots,P_8)|\  \langle Q_0, Q \rangle=:\frac{1}{2l}\mathrm{Trace}(Q_0Q)=0 \}, $$
which is the equatorial sphere of $\Sigma(P_0,\cdots,P_8)$ orthogonal to $Q_0$.
Then we can extend $Q_0$ to such a symmetric Clifford system $\{Q_0, Q_1, \cdots , Q_8\}$
with $Q_i\in\Sigma_{Q_0}$ $(i\geq 1)$ that $\Sigma(Q_0, Q_1, \cdots, Q_8)=\Sigma(P_0, P_1, \cdots, P_8)$.

Let us now choose $N_1, N_2, \cdots, N_{8}$ as an orthonormal basis of $T^{\bot}_xM_-$ in $S^{31}(1)$.
According to \textbf{4.5}(iii) of \cite{FKM81},
\begin{eqnarray}\label{normalker}
T_x^{\bot}M_-=\{N\in E_-(Q_0)~|~N\bot \Sigma_{Q_0}x\}, \\
  ker S_N=\{v\in E_+(Q_0)~|~v\bot x,~ v\bot \Sigma_{Q_0}N\},\nonumber
\end{eqnarray}
for any unit normal vector $N$. Besides,
Lemma 2.1 in \cite{TY15} provides an orthonormal basis of $T_xM_-$:
for a given $1\leq i\leq 8$,
\begin{equation}\label{o.n.b of TxM2}
\{Q_1x,\cdots,Q_8x,~Q_iN_1,\cdots,Q_iN_8,~~~Q_iQ_1x,\cdots,\widehat{Q_iQ_{i}x},\cdots,Q_iQ_8x\}.
\end{equation}
Let us arrange these basis vectors in an alternate order as eigenvectors of $Q_0$:
\begin{eqnarray}\label{E+-}
&&E_-(Q_0) = \mathrm{Span}\{N_1,\cdots, N_8,~Q_1x,\cdots,Q_8x\} \\
&&E_+(Q_0) = \mathrm{Span}\{Q_iN_1,\cdots, Q_iN_8,~Q_iQ_1x,\cdots,Q_iQ_8x\}~\mathrm{for~ a ~given}~ 1\leq i\leq 8\nonumber.
\end{eqnarray}
By the definition of $C_A$ and (\ref{normalker}), we find that $x\in C_A$ if and only if $\mathrm{Span}\{Q_1N,\cdots,Q_8N\}\subset E_+(Q_0)$ is independent of the choice of $N$.
Equivalently, for $x\in M_-$,
\begin{equation}\label{span}
   x\in C_A\Leftrightarrow \mathrm{Span}\{Q_iN_1,\cdots,Q_iN_8,~i=1,\cdots, 8\}~\mathrm{is~of~dimensional~8}.
\end{equation}
In addition, we observe that
$$\dim \mathrm{Span}\{Q_1N_k,\cdots,Q_8N_k\}=\dim \mathrm{Span}\{Q_iN_1,\cdots,Q_iN_8\}, ~\mathrm{for~ }~ 1\leq i, k\leq 8.$$
This leads to alternative equivalent conditions of Condition A:
\begin{eqnarray}
  x\in C_A &\Leftrightarrow& \mathrm{Span}\{Q_iN_1,\cdots,Q_iN_8\} ~\mathrm{is~ fixed~ for~ any}~ 1\leq i\leq 8, \label{CA}\\
           &\Leftrightarrow& \mathrm{Span}\{Q_iQ_1x,\cdots,Q_iQ_8x\} ~\mathrm{is~ fixed~ for~ any}~ 1\leq i\leq 8. \label{CA2}
\end{eqnarray}
Observe that
\begin{equation*}
\mathrm{Span}\{Q_jN_1,\cdots, Q_jN_8\}\xrightarrow{Q_j}\mathrm{Span}\{N_1,\cdots, N_8\}\xrightarrow{Q_i}\mathrm{Span}\{Q_iN_1,\cdots, Q_iN_8\}.
\end{equation*}
Thus for $x\in C_A\subset M_-$, $i,j,k=1,\cdots, 8, i\neq j$, we have
\begin{eqnarray}\label{Qinorm}
Q_iQ_j:&&~\mathrm{Span}\{Q_kN_1,\cdots, Q_kN_8\}=\mathrm{Span}\{Q_jN_1,\cdots, Q_jN_8\}\\
&&\rightarrow \mathrm{Span}\{Q_iN_1,\cdots, Q_iN_8\}=\mathrm{Span}\{Q_kN_1,\cdots, Q_kN_8\}.\nonumber
\end{eqnarray}
Then from (\ref{E+-}), (\ref{CA2}) and the fact that $Q_i$ maps $E_{\pm}(Q_0)$ to $E_{\mp}(Q_0)$, it follows that (\ref{Qinorm}) is equivalent to either of the following
\begin{equation}\label{QiQjx}
Q_iQ_j:  \mathrm{Span}\{Q_kQ_1x,\cdots, Q_kQ_8x\}\rightarrow \mathrm{Span}\{Q_kQ_1x,\cdots, Q_kQ_8x\},
\end{equation}
\begin{equation}\label{Qix}
Q_iQ_j:  \mathrm{Span}\{Q_1x,\cdots, Q_8x\}\rightarrow \mathrm{Span}\{Q_1x,\cdots, Q_8x\}.
\end{equation}
Conversely, it is evident that each of (\ref{Qinorm}-\ref{Qix}) implies (\ref{span}-\ref{CA2}) and hence, all of them are equivalent conditions for $x\in C_A$.

Now we are ready to determine the set $C_A\subset M_-$ for the case with $(8,7)$.
As in the introduction, we still add superscripts $I$ and $D$ to distinguish the corresponding focal submanifolds and the sets $C_A$
in the indefinite and definite cases, respectively. That is, the indefinite type Clifford system $\{P_0,\cdots,P_8\}$ for $I$ and the definite $\{\tilde{P_0}=:P_0P, \cdots, \tilde{P_8}=:P_8P\}$ for $D$. From now on, we denote $P=:P_0\cdots P_8\neq\pm Id$.
Firstly, we give the following
\begin{prop}\label{C_A F}
$C_A^I=M_-^I\cap M_-^D=(E_{+}(P)\cup E_-(P))\cap S^{31}(1)=S^{15}(1)\sqcup S^{15}(1).$
\end{prop}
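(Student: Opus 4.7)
The plan is to prove two equivalences: for $x\in M_-^I$, Condition A, membership in $M_-^D$, and the eigenvalue condition $Px=\pm x$ are all equivalent. Granted these, the final equality follows because $P$ is a symmetric orthogonal involution with $\mathrm{Trace}(P)=0$ in the indefinite case, so each $E_\pm(P)$ is $16$-dimensional and $E_\pm(P)\cap S^{31}(1)=S^{15}(1)$. A preliminary point is that $P$ commutes with every $P_\alpha$: since $m=8$ is even, the parity count in moving $P_\alpha$ through the product $P_0\cdots P_8$ yields $PP_\alpha=P_\alpha P$, so $P$ commutes with all of $\Sigma(P_0,\ldots,P_8)$ and preserves $E_\pm(P)$.

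To show $M_-^I\cap M_-^D\subseteq (E_+(P)\cup E_-(P))\cap S^{31}(1)$, suppose $Qx=x$ with $Q\in\Sigma(P_0,\ldots,P_8)$ and $Q'Px=x$ with $Q'\in\Sigma(P_0,\ldots,P_8)$, using that every element of $\Sigma(\tilde P_0,\ldots,\tilde P_8)$ has the form $Q'P$. Decomposing $x=x_++x_-$ with $x_\pm\in E_\pm(P)$ and using pairwise commutation of $Q,Q',P$ gives $Qx_\pm=x_\pm$ while $Q'x_+=x_+$, $Q'x_-=-x_-$. If $Q+Q'\neq 0$, then $(Q+Q')/\|Q+Q'\|\in\Sigma(P_0,\ldots,P_8)$ is an invertible involution, so $(Q+Q')x_-=0$ forces $x_-=0$; otherwise $Q=-Q'$ and then $Q'x_+=-Qx_+=-x_+=x_+$ forces $x_+=0$. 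Conversely, on $E_+(P)$ the restricted Clifford system satisfies $P_0\cdots P_8=\mathrm{Id}$; modeling $E_+(P)\cong \mathbb{O}\oplus \mathbb{O}$ with $P_0(u,v)=(u,-v)$ and $P_\alpha(u,v)=(e_\alpha v,-e_\alpha u)$ for $\alpha=1,\ldots,7$, and invoking the identity $L_{e_1}\cdots L_{e_7}=-\mathrm{Id}$ on $\mathbb{O}$ established in the proof of Assertion~\ref{assert3}, one identifies $P_8=P_0\cdots P_7$ with the swap $(u,v)\mapsto(v,u)$. The Parseval identity $\sum_{\alpha=0}^{7}\langle u,e_\alpha v\rangle^2=|u|^2|v|^2$ (valid since $\{e_\alpha v\}_{\alpha=0}^{7}$ is an orthogonal basis of $\mathbb{O}$ with $|e_\alpha v|=|v|$) then yields
\[
\sum_{\alpha=0}^{8}\langle P_\alpha x,x\rangle^2=(|u|^2-|v|^2)^2+4(|u|^2|v|^2-\langle u,v\rangle^2)+4\langle u,v\rangle^2=|x|^4,
\]
so $E_+(P)\cap S^{31}(1)\subseteq M_-^I$; since $\tilde P_\alpha x=P_\alpha Px=P_\alpha x$ on $E_+(P)$, also $\subseteq M_-^D$. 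The symmetric treatment handles $E_-(P)$.

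For the equivalence with Condition A, let $x\in M_-^I$ with $Q_0x=x$ and extend to $\{Q_0,\ldots,Q_8\}$ so $Q_0Q_1\cdots Q_8=\pm P$. If $Px=\epsilon x$ then each $Q_ix\in E_\epsilon(P)\cap E_-(Q_0)$; the vanishing $\mathrm{Trace}(Q_0P)=0$ (each $P_\alpha P$ anticommutes with every $P_\gamma$ for $\gamma\neq\alpha$ and is therefore trace-free) gives $\dim(E_-(Q_0)\cap E_\epsilon(P))=8=\dim V$, so $V=E_-(Q_0)\cap E_\epsilon(P)$; since $Q_iQ_j$ commutes with both $Q_0$ and $P$, it preserves $V$, which is Condition A via (\ref{Qix}). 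Conversely, if $x\in C_A^I$, then (\ref{Qix}) forces $Q_iV$ to be a common $8$-dimensional subspace $W^\perp\subseteq E_+(Q_0)$ independent of $i$, with $x\in W^\perp$. Setting $J_i:=Q_1Q_i$ for $i=2,\ldots,8$, these operators preserve $W^\perp$ (since $Q_iW^\perp=V$ and $Q_1V=W^\perp$), satisfy $J_i^2=-\mathrm{Id}$, and $J_iJ_j=-J_jJ_i$ for $i\neq j$; hence $W^\perp$ is an $8$-dimensional irreducible module over the Clifford algebra on seven generators squaring to $-\mathrm{Id}$, and the volume element $J_2J_3\cdots J_8$ must act on $W^\perp$ as a scalar $\pm\mathrm{Id}$. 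A short manipulation using $Q_i=Q_1J_i$ and $J_iQ_1=-Q_1J_i$ shows $Q_1Q_2\cdots Q_8=-J_2J_3\cdots J_8$ as operators on $\mathbb{R}^{32}$, so $(Q_1\cdots Q_8)x=\pm x$ and hence $Px=\pm x$. The main obstacle is this irreducibility/volume-element step: it is what upgrades the $Q_iQ_j$-invariance of $V$ delivered by Condition A into the sharp scalar identity $(Q_1\cdots Q_8)x=\pm x$ that brings $x$ into $E_+(P)\cup E_-(P)$.
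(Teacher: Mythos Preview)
Your proof is correct and reaches the same conclusion, but the route differs from the paper's in each of the four implications. For $M_-^I\cap M_-^D\subseteq E_\pm(P)$, the paper argues that $x\in M_-^D$ forces $Px\in\mathrm{Span}\{Q_0x,\ldots,Q_8x\}$ and then uses $PQ_0=Q_0P$ to isolate $Px=c_0x$; your $(Q+Q')$-dichotomy is a clean independent alternative. For the reverse inclusion, the paper decomposes $x\in E_+(P)$ along $E_\pm(P_0)$ and computes $\sum\langle P_\alpha x,x\rangle^2$ directly, whereas you pass to an explicit octonionic model of the restricted definite system on $E_+(P)\cong\mathbb{R}^{16}$; this is legitimate because for $m=8$, $l=8$ there is a unique geometric equivalence class and $\sum\langle P_\alpha x,x\rangle^2$ is invariant under such equivalence. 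For $C_A^I\Rightarrow Px=\pm x$, the paper observes via (\ref{QiQjx}) that $Px=(Q_1\cdots Q_6)(Q_7Q_8x)$ lands in $\mathrm{Span}\{Q_kQ_jx\}$ and is orthogonal to every $Q_kQ_ix$ with $i\neq k$ (the six-fold products being skew), so $Px\parallel x$; your argument through the $8$-dimensional irreducible Clifford module and the scalar action of the volume element $J_2\cdots J_8$ is a more structural substitute. For $Px=\pm x\Rightarrow C_A^I$, the paper cites the description of $\ker S_N$ from \cite{FKM81}, while your verification via $V=E_-(Q_0)\cap E_\epsilon(P)$ and (\ref{Qix}) is self-contained. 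The paper's path is more hands-on; yours leans on representation theory, which makes the last two steps conceptually transparent at the cost of invoking the classification of $\mathrm{Cl}_7$-modules.
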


\begin{proof} We shall start by verifying
\begin{equation}\label{M-FG}
  M_-^I\cap M_-^D=E_{\pm}(P)\cap S^{31}(1).
\end{equation}
If $x\in M_-^I$, then there exists $Q_0\in\Sigma(P_0,\cdots,P_8)$ such that $Q_0x=x$.
In addition, if $x\in M_-^D$, that is, 
$1=\sum_{\alpha=0}^8\langle \tilde{P}_{\alpha}x, x\rangle^2=\sum_{\alpha=0}^8\langle P_{\alpha}Px, x\rangle^2$,
which implies that $Px\in \mathrm{Span}\{P_0x,\cdots,P_8x\}=\mathrm{Span}\{Q_0x,\cdots,Q_8x\}$
by the aforementioned statement $\Sigma(Q_0, Q_1, \cdots, Q_8)=\Sigma(P_0, P_1, \cdots, P_8)$.
So we can express $Px=\sum_{\alpha=0}^8c_{\alpha}Q_{\alpha}x$ ($c_{\alpha}\in\mathbb{R}$) in the following way:
\begin{equation*}
  Px=PQ_0x=Q_0Px=c_0x+\sum_{\alpha=1}^8c_{\alpha}Q_0Q_{\alpha}x=c_0x-\sum_{\alpha=1}^8c_{\alpha}Q_{\alpha}x.
\end{equation*}
The arguments above imply that $Px=c_0x$ and thus $Px=\pm x$ since $|Px|=|x|=1$.
Therefore, $M_-^I\cap M_-^D\subset E_{\pm}(P)\cap S^{31}(1)$.

Conversely, suppose $x\in E_{+}(P)\cap S^{31}(1)$ ( the proof for $x\in E_{-}(P)\cap S^{31}(1)$ will follow the same way ).
We know that $P_{\alpha}^2=P^2=Id$, $\mathrm{Trace} P_{\alpha}=\mathrm{Trace} P=0$, which lead us to the
decomposition $\mathbb{R}^{32}=E_+(P_0)\oplus E_-(P_0)=E_+(P)\oplus E_-(P)$. Thus we can decompose $x$ as
$$x=x_++x_-\in (E_+(P)\cap E_+(P_0))\oplus(E_+(P)\cap E_-(P_0)).$$
If $x_+=0$ or $x_-=0$, it follows easily that $\langle P_0x, x\rangle^2=\langle P_0Px, x\rangle^2=1$, which implies $x\in M_-^I\cap M_-^D$. So we are left to consider $x_{+}\neq 0$ and $x_-\neq 0$.
Noticing that
\begin{equation*}
  P_{\alpha}: E_{\pm}(P_0)\rightarrow E_{\mp}(P_0),~~\alpha=1,\cdots,8,
\end{equation*}
we obtain $\dim E_+(P)\cap E_+(P_0)=\dim E_+(P)\cap E_-(P_0)=8$, and further,
$E_+(P)\cap E_-(P_0)=\mathrm{Span}\{P_1x_+,\cdots, P_8x_+\}$ for dimensional reason.
Thus we can express $x_-$ as
\begin{equation*}
  x_-=\sum_{\alpha=1}^8\langle x_-, P_{\alpha}\frac{x_+}{|x_+|}\rangle P_{\alpha}\frac{x_+}{|x_+|}=\frac{1}{|x_+|^2}\sum_{\alpha=1}^8\langle x_-, P_{\alpha}x_+\rangle P_{\alpha}x_+,
\end{equation*}
which implies that $|x_+|^2|x_-|^2=\sum_{\alpha=1}^8\langle x_-, P_{\alpha}x_+\rangle^2$.
Therefore,
\begin{eqnarray*}
\sum_{\alpha=0}^8\langle P_{\alpha}x, x\rangle^2 &=& (|x_+|^2-|x_-|^2)^2+\sum_{\alpha=1}^8\langle P_{\alpha}(x_++x_-), (x_++x_-)\rangle^2 \\
   &=& (|x_+|^2+|x_-|^2)^2-4|x_+|^2|x_-|^2+4 \sum_{\alpha=1}^8\langle x_-, P_{\alpha}x_+\rangle^2\\
   &=& |x|^4=1,
\end{eqnarray*} and
\begin{equation*}
  \sum_{\alpha=0}^8\langle \tilde{P}_{\alpha}x, x\rangle^2=\sum_{\alpha=0}^8\langle P_{\alpha}x, Px\rangle^2=\sum_{\alpha=0}^8\langle P_{\alpha}x, x\rangle^2=1,
\end{equation*}
that is, $x\in M_-^I\cap M_-^D$.

In conclusion, $M_-^I\cap M_-^D=E_{\pm}(P)\cap S^{31}(1)$, proving the assertion (\ref{M-FG}).

Next, we will prove 
\begin{equation}\label{CAindef}
C_A^I=E_{\pm}(P)\cap S^{31}(1).
\end{equation}
Suppose $x\in C_A^I
\subset M_-^I$. Then there exists $Q_0\in \Sigma(P_0,\cdots, P_8)$ such that $Q_0x=x$.
We extend $Q_0$ to a symmetric Clifford system $\{Q_0,\cdots, Q_8\}$ in such a way that $P=P_0\cdots P_8=Q_0\cdots Q_8$,
which makes $Px=Q_1\cdots Q_8x$. Since $x\in C_A^I$, we have (\ref{QiQjx}):
\begin{equation*}
Q_iQ_j:  \mathrm{Span}\{Q_kQ_1x,\cdots, Q_kQ_8x\}\rightarrow \mathrm{Span}\{Q_kQ_1x,\cdots, Q_kQ_8x\}, \quad 1\leq i,j,k\leq8, i\neq j.
\end{equation*}
Cycling down, we get $Px=(Q_1\cdots Q_6)(Q_7Q_8x)\in \mathrm{Span}\{Q_kQ_1x,\cdots, Q_kQ_8x\}$.
Then from the observation that $\langle Px, Q_kQ_ix\rangle=\langle Q_1\cdots Q_8x, Q_kQ_ix\rangle=0$, $\forall~ i\neq k$,
we derive $Px=Q_1\cdots Q_8x=\pm x$. Namely, $x\in E_{\pm}(P)\cap S^{31}(1)$.

Conversely, suppose $x\in E_{\pm}(P)\cap S^{31}(1)$. From (\ref{M-FG}), 
it follows that $x\in M_-^I\cap M_-^D\subset M_-^I$. Then there exist $Q_0\in \Sigma(P_0,\cdots, P_8)$ such that $Q_0x=x$,
which leads to $Q_1\cdots Q_8x=Px=\pm x$. In other words, $x\in E_+(Q_0)\cap E_{\pm}(Q_1\cdots Q_8)$.
Then following the arguments on p.498-499 of \cite{FKM81}, we obtain
$$ker S_N=\{\omega\in E_+(Q_0)\cap E_{\pm}(Q_1\cdots Q_8)~|~\langle \omega, x\rangle=0\},$$ which is a fixed set independent of the choice of the unit normal vector $N$.
Alternatively speaking, $E_{\pm}(P)\cap S^{31}(1)\subset C_A^I$. The proof of (\ref{CAindef}) and thus the proof of Proposition \ref{C_A F} is now complete.
\end{proof}

As the last content of this subsection, we will concentrate on the definite case. As in Section 6.6 of \cite{FKM81}, we can extend a Clifford system $\{P_0, \cdots, P_8\}$ on $\mathbb{R}^{32}$
to a system $\{P_0, \cdots, P_9\}$. Recall $P=:P_0\cdots P_8$. With this preparation, we are ready to show the following criterion for $C_A^D$:
\begin{prop}\label{C_A G}
$x\in C_A^D \Leftrightarrow \langle x, Px\rangle^2+\langle x, P_9x\rangle^2=1,$ $x\in S^{31}(1)$.
\end{prop}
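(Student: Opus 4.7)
The first step is to decode the right-hand side. Because $(PP_9)^T=P_9P=-PP_9$ is skew-symmetric, one has $\langle Px,P_9x\rangle=0$; combined with $|Px|=|P_9x|=1$ this makes $\{Px,P_9x\}$ an orthonormal pair in $\mathbb{R}^{32}$. Hence the identity $\langle x,Px\rangle^2+\langle x,P_9x\rangle^2=1=|x|^2$ is equivalent to $x$ lying in $\mathrm{Span}\{Px,P_9x\}$, i.e.\ to
\[
 x=aPx+bP_9x,\qquad a=\langle x,Px\rangle,\ b=\langle x,P_9x\rangle,\ a^2+b^2=1,
\]
which in operator form reads $Mx=x$ for the symmetric matrix $M:=aP+bP_9$. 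A short Clifford-algebraic computation, using $PP_\alpha=P_\alpha P$ and $P_9P_\alpha=-P_\alpha P_9$ for $\alpha=0,\cdots,8$, gives $M^2=(a^2+b^2)I=I$ and, more importantly,
\[
 M\tilde P_\alpha=P_\alpha(aI+bPP_9)=\tilde P_\alpha M,
\]
so $M$ is a symmetric involution lying in the commutant of the definite Clifford system $\{\tilde P_0,\cdots,\tilde P_8\}$. Consequently $E_+(M)$ is a $16$-dimensional $\{\tilde P_\alpha\}$-invariant subspace on which the $\tilde P_\alpha$'s restrict to an \emph{irreducible} $9$-Clifford system on $\mathbb{R}^{16}$ (since $l=8=1\cdot\delta(8)$).

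For direction ($\Leftarrow$) I would exploit this irreducibility. In the irreducible case the polynomial identity $\sum_{\alpha=0}^{8}\langle\tilde P_\alpha y,y\rangle^2=|y|^4$ holds for every $y\in E_+(M)$ (the degenerate OT-FKM polynomial $F\equiv-|y|^4$), so every $x\in E_+(M)\cap S^{31}$ automatically satisfies $\sum_{\alpha=0}^{8}\langle\tilde P_\alpha x,x\rangle^2=1$; then $\tilde Q_0=\sum_\alpha\langle\tilde P_\alpha x,x\rangle\tilde P_\alpha$ belongs to $\Sigma(\tilde P_0,\cdots,\tilde P_8)$, has $\tilde Q_0x=x$, and witnesses $x\in M_-^D$. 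To upgrade to Condition A, extend $\tilde Q_0$ to a Clifford basis $\{\tilde Q_0,\cdots,\tilde Q_8\}$ and use the kernel description (\ref{normalker}): since $M$ commutes with every $\tilde Q_\alpha$, the splitting of $E_+(\tilde Q_0)\cap\{x\}^{\perp}$ induced by $M$ is the same for every choice of unit normal $\tilde N$, so the kernels $\ker S_{\tilde N}$ all coincide, giving $x\in C_A^D$.

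For direction ($\Rightarrow$) I imitate the cycling-down argument of Proposition \ref{C_A F}, but corrected for the definite case. Given $x\in C_A^D$, pick $Q_0\in\Sigma(P_0,\cdots,P_8)$ with $Q_0x=Px$ and extend to $\{Q_0,\cdots,Q_8\}$ with $Q_0\cdots Q_8=P$, so that $\tilde Q_0\cdots\tilde Q_8=I$. Characterization (\ref{Qix}) applied to the $\tilde Q$'s says that the $8$-plane $V=\mathrm{Span}\{Q_iQ_\alpha x:\alpha=1,\cdots,8\}$ (independent of $i$) is preserved by every $Q_iQ_j$ with $i\neq j\geq 1$. The pairing $(Q_1Q_2)(Q_3Q_4)(Q_5Q_6)(Q_7Q_8)$ applied to $x\in V$ now yields $Q_1\cdots Q_8x=Q_0Px=Q_0^{2}x=x$, a tautology, so the indefinite conclusion ``$Px=\pm x$'' collapses. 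To restore content, extend $\{P_0,\cdots,P_8\}$ to the $10$-Clifford system $\{P_0,\cdots,P_9\}$ on $\mathbb{R}^{32}$ and use that $P_9$ \emph{commutes} with every $Q_iQ_j$ for $i\neq j$ in $\{1,\cdots,8\}$. Inserting $P_9$ at the last stage of the cycle and applying the skew-symmetry arguments from the proof of Proposition \ref{C_A F} (now to $Q_1\cdots Q_8 P_9 x$) should force $Px$ into $\mathrm{Span}\{x,P_9x\}$; together with $\langle Px,x\rangle=a$ and $\langle Px,P_9x\rangle=0$ this gives $x=aPx+bP_9x$ with $a^{2}+b^{2}=1$.

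The main obstacle I anticipate is in ($\Rightarrow$): the indefinite skew-symmetry argument isolates $Px$ to the line $\mathbb{R}x\subset V$, and after $P_9$ is brought in one must bookkeep which of the resulting triple/quadruple products in $V$ are skew versus symmetric so as to cut $Px$ down to the precise $2$-plane $\mathrm{Span}\{x,P_9x\}$ rather than to something larger, while simultaneously checking that no spurious orthogonality is violated. Once this refinement is in place, the two directions close and the proposition follows.
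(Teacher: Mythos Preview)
Your direction ($\Leftarrow$) is correct and in fact cleaner than the paper's. The observation that $M=aP+bP_9$ commutes with every $\tilde P_\alpha$ is the right organizing principle: it immediately gives a $16$-dimensional $\{\tilde P_\alpha\}$-invariant subspace $E_+(M)$ on which the restricted system is the unique (definite) $9$-Clifford system on $\mathbb{R}^{16}$, whence $\sum_\alpha\langle\tilde P_\alpha y,y\rangle^2=|y|^4$ and $x\in M_-^D$. The Condition~A conclusion then drops out because the normal space is exactly $E_-(\tilde Q_0)\cap E_-(M)$ (use that $\tilde Q_1$ anticommutes with $\tilde Q_0$ and commutes with $M$ to see the four simultaneous eigenspaces are all $8$-dimensional), so $\mathrm{Span}\{\tilde Q_1N,\ldots,\tilde Q_8N\}=E_+(\tilde Q_0)\cap E_-(M)$ for every unit normal $N$. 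By contrast, the paper verifies $x\in M_-^D$ by a direct decomposition-and-count (Assertion~\ref{assert1-2}), and then obtains Condition~A via an explicit description of the normal space as $\mathrm{Span}\{Q_1P_9x,\ldots,Q_8P_9x\}$; your $M$-splitting subsumes both steps at once.

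Your direction ($\Rightarrow$), however, has a real gap that your own diagnosis underestimates. The fact that $P_9$ commutes with each $Q_iQ_j$ does \emph{not} imply $P_9$ preserves $V$: that would require $P_9x\in V$, which is precisely what is at stake. Your skew-symmetry computation only shows that the $V$-component of $P_9x$ lies in $\mathbb{R}x$; the $W$-component (where $E_+(\tilde Q_0)=V\oplus W$) is uncontrolled by cycling alone, and ``inserting $P_9$ at the last stage'' does not cut it down. The paper closes this gap by a substantially different maneuver: starting from Condition~A it performs two successive orthogonal changes of basis, first among $\{Q_3,\ldots,Q_8\}$ (diagonalizing the action of $Q_1Q_2$ on $\mathrm{Span}\{Q_1Q_3x,\ldots,Q_1Q_8x\}$) and then among $\{Q_1,Q_2\}$, to force the explicit relations $Q_1Q_2x=Q_6Q_3x=Q_7Q_4x=Q_8Q_5x$ and $Q_1x=Q_3Q_4Q_5x$. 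These pin $x$ to the $2$-plane
\[
E_+(Q_1Q_2Q_3Q_6)\cap E_+(Q_1Q_2Q_4Q_7)\cap E_+(Q_1Q_2Q_5Q_8)\cap E_+(Q_1Q_3Q_4Q_5)=\mathrm{Span}\{x_{++},x_{--}\}
\]
(Assertion~\ref{assert1-1}); only then does commutation of $P_9$ with the four displayed quadruple products give $P_9:\mathrm{Span}\{x_{++},x_{--}\}\to\mathrm{Span}\{x_{++},x_{--}\}$, and anticommutation with $P$ finishes. This adapted-basis construction is the missing idea in your sketch.
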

\begin{proof}
Firstly, we assume the left. Following the aforementioned illustration of the definite case, if $x\in C_A^D\subset M_-^D$, there exists
$Q_0\in\Sigma(P_0,\cdots,P_8)$ such that $Q_0Px=x$. As before, we extend $Q_0$ to a symmetric Clifford system $\{Q_0,\cdots, Q_8\}$ in such a way that $P=P_0\cdots P_8=Q_0\cdots Q_8$. Noticing that $Q_1Q_0=-Q_0Q_1$, $P_9Q_0=-Q_0P_9$ and $P_9P=-PP_9$,
we can decompose $\mathbb{R}^{32}$ as the direct sum of four $8$-dimensional subspaces
$$\mathbb{R}^{32}=(E_+(Q_0)\cap E_+(P))\oplus(E_+(Q_0)\cap E_-(P))\oplus(E_-(Q_0)\cap E_+(P))\oplus(E_-(Q_0)\cap E_-(P)),$$
and express $x=x_{++}+x_{+-}+x_{-+}+x_{--}$ with corresponding indexes.
Thus $Q_0Px=x$ suggests that $x_{+-}=x_{-+}=0$, which leaves
$$x=x_{++}+x_{--}\in(E_+(Q_0)\cap E_+(P))\oplus(E_-(Q_0)\cap E_-(P)).$$
Clearly, if $x_{++}=0$ or $x_{--}=0$, then $\langle x, Px\rangle^2=1$, and $\langle x, P_9x\rangle^2=\langle Px, P_9x\rangle^2=0$.
Hence the equation $\langle x, Px\rangle^2+\langle x, P_9x\rangle^2=1$ holds. So we only need to consider the case
that $x_{++}\neq 0$ and $x_{--}\neq 0$.

An observation shows
\begin{eqnarray}\label{Q0P}
E_+(Q_0)\cap E_+(P) &=& \mathrm{Span}\{ Q_iQ_jx_{++}~:~1\leq i, j\leq 8\}\cong\mathbb{R}^8, \\
E_-(Q_0)\cap E_-(P) &=& \mathrm{Span}\{ Q_iQ_jx_{--}~:~1\leq i, j\leq 8\}\cong\mathbb{R}^8. \nonumber
\end{eqnarray}
Moreover, since $ Q_iPQ_jP=Q_iQ_j$, we still have the equivalent conditions (\ref{CA2}, \ref{QiQjx}) of $x\in C_A^D$ in the same form as in the indefinite case. In particular,
$$Q_1Q_2:~\mathrm{Span}\{ x, Q_1Q_2x,\cdots, Q_1Q_8x\}\rightarrow \mathrm{Span}\{ x, Q_1Q_2x,\cdots, Q_1Q_8x\},$$ which leads to
 $$Q_1Q_2:~\mathrm{Span}\{ Q_1Q_3x,\cdots, Q_1Q_8x\}\rightarrow \mathrm{Span}\{ Q_1Q_3x,\cdots, Q_1Q_8x\}.$$
Noticing that $Q_1Q_2$ is an anti-selfadjoint orthogonal transformation, there exists an orthonormal basis $\{v_k, k=3,\cdots,8\}$ of $\mathrm{Span}\{ Q_1Q_3x,\cdots, Q_1Q_8x\}$,
such that
\begin{equation}\label{vk'}
Q_1Q_2(v_3)=v_6,~ Q_1Q_2(v_4)=v_7, ~Q_1Q_2(v_5)=v_8.
\end{equation}
Furthermore, there exists an orthogonal transformation from $\{Q_3, \cdots, Q_8\}$ to $\{Q'_3, \cdots, Q'_8\}$ such that $v_k=Q_1Q'_kx$.
For convenience, we still denote $Q'_k$ by $Q_k$ without confusion.
Then we can derive from (\ref{vk'}) that
\begin{equation}\label{vk}
Q_1Q_2x=Q_6Q_3x=Q_7Q_4x=Q_8Q_5x,
\end{equation}
and thus
$$x\in E_+(Q_1Q_2Q_3Q_6)\cap E_+(Q_1Q_2Q_4Q_7)\cap E_+(Q_1Q_2Q_5Q_8).$$
Next we show that $Q_3Q_4Q_5x, Q_3Q_4Q_8x\in \mathrm{Span}\{Q_1x,Q_2x\}$. By (\ref{QiQjx}), it suffices to verify that
$Q_4Q_5x$ and $Q_4Q_8x$ are orthogonal to $Q_3Q_kx$ for $k=3,\ldots, 8$, which can be deduced directly by applying (\ref{vk}).
Therefore, there exists an orthogonal transformation from $\{Q_1,Q_2\}$ to $\{Q'_1, Q'_2\}$ such that $Q_3Q_4Q_5x=Q'_1x$, $Q_3Q_4Q_8x=Q'_2x$.
Again, without confusion we denote $Q'_i$ by $Q_i$.
In conclusion, we obtain
$$x\in E_+(Q_1Q_2Q_3Q_6)\cap E_+(Q_1Q_2Q_4Q_7)\cap E_+(Q_1Q_2Q_5Q_8)\cap E_+(Q_1Q_3Q_4Q_5).$$

In the following, we prove
\begin{assert}\label{assert1-1}
\begin{eqnarray*}\label{4E+}
&&\mathrm{Span}\{x_{++}, x_{--}\}\\
&=&E_+(Q_1Q_2Q_3Q_6)\cap E_+(Q_1Q_2Q_4Q_7)\cap E_+(Q_1Q_2Q_5Q_8)\cap E_+(Q_1Q_3Q_4Q_5).\nonumber
\end{eqnarray*}
\end{assert}

\begin{proof}
By (\ref{Q0P}), $Q_3Q_6x_{++}\in E_+(Q_0)\cap E_+(P)$ and $Q_1Q_2x_{--}\in E_-(Q_0)\cap E_-(P)$,
thus
$\langle Q_1Q_2Q_3Q_6x_{++}, x_{--}\rangle=\langle Q_1Q_2Q_3Q_6x_{--}, x_{++}\rangle=0$. It follows from $Q_1Q_2Q_3Q_6x=x$
that $x_{++}, x_{--}\in E_+(Q_1Q_2Q_3Q_6).$ Similarly, $x_{++}, x_{--}\in E_+(Q_1Q_2Q_4Q_7),$ $E_+(Q_1Q_2Q_5Q_8)$, $E_+(Q_1Q_3Q_4Q_5).$

We are left to prove that the dimension of space $E_+(Q_1Q_2Q_3Q_6)\cap E_+(Q_1Q_2Q_4Q_7)\cap E_+(Q_1Q_2Q_5Q_8)\cap E_+(Q_1Q_3Q_4Q_5)$ is $2$.
We first observe that $Q_1Q_2Q_3Q_6$, $Q_1Q_2Q_4Q_7$ $Q_1Q_2Q_5Q_8$ $Q_1Q_3Q_4Q_5$ are symmetric orthogonal matrices with vanishing traces. Thus their $+1$ eigenspaces have dimension $16$. Notice that $E_+(Q_1Q_2Q_3Q_6)$ is an invariant space
of the anti-commuting operators $Q_1Q_2Q_4Q_7$ and $Q_4$. Thus $E_+(Q_1Q_2Q_3Q_6)\cap E_+(Q_1Q_2Q_4Q_7)$ is of dimension $8$ and further it is an invariant space of
the anti-commuting operators $Q_1Q_2Q_5Q_8$ and $Q_5$. Thus $E_+(Q_1Q_2Q_3Q_6)\cap E_+(Q_1Q_2Q_4Q_7)\cap E_+(Q_1Q_2Q_5Q_8)$ is of dimension $4$. Since $Q_1Q_2$ commutes with
$Q_1Q_2Q_3Q_6$, $Q_1Q_2Q_4Q_7$ $Q_1Q_2Q_5Q_8$ and anti-commutes with $Q_1Q_3Q_4Q_5$, the dimension of the space $E_+(Q_1Q_2Q_3Q_6)$ $\cap E_+(Q_1Q_2Q_4Q_7)\cap E_+(Q_1Q_2Q_5Q_8)\cap E_+(Q_1Q_3Q_4Q_5)$ is $2$, as desired.
\end{proof}

Notice that $\{Q_0, \cdots, Q_8, P_9\}$ is also a Clifford system on $\mathbb{R}^{32}$.
Obviously, $P_9$ commutes with $Q_1Q_2Q_3Q_6$, $Q_1Q_2Q_4Q_7$, $Q_1Q_2Q_5Q_8$ and $Q_1Q_3Q_4Q_5$ simultaneously,
thus Assertion \ref{assert1-1} implies that
\begin{equation*}
  P_9:~\mathrm{Span}\{x_{++}, x_{--}\}\rightarrow \mathrm{Span}\{x_{++}, x_{--}\}.
\end{equation*}
Since $P_9$ anti-commutes with $P=Q_0\cdots Q_8$, we can assume that $P_9x_{--}=\lambda x_{++}$ with $\lambda\neq 0$, and thus $P_9x_{++}=\lambda^{-1} x_{--}$.
Hence we obtain $x=x_{++}+x_{--}\in \mathrm{Span}\{ Px, P_9x\}$, that is, $\langle x, Px\rangle^2+\langle x, P_9x\rangle^2=1$.

Conversely, suppose $\langle x, Px\rangle^2+\langle x, P_9x\rangle^2=1$ and $x\in S^{31}(1)$. We need to prepare
\begin{assert}\label{assert1-2}
For $x\in S^{31}(1)$, $\langle x, Px\rangle^2+\langle x, P_9x\rangle^2=1$ implies $x\in M_-^D$, i.e.,
$\sum_{i=0}^8\langle Px, P_ix\rangle^2=1$.
\end{assert}

\begin{proof}
If $Px=\pm x$, (\ref{M-FG}) implies that $x\in M_-^D$.
So we assume here that $x=x_++x_-\in E_+(P)\oplus E_-(P)$ with $x_{\pm}\neq 0$.
Since $P_9$ anti-commutes with $P$, we have $P_9x_{\pm}\in E_{\mp}(P)$ and
\begin{eqnarray*}
1=\langle x, Px\rangle^2+\langle x, P_9x\rangle^2&=&(|x_+|^2-|x_-|^2)^2+4\langle x_+, P_9x_-\rangle^2\\
&\leq& (|x_+|^2-|x_-|^2)^2+4|x_+|^2|x_-|^2=1,
\end{eqnarray*}
which shows that
\begin{equation}\label{P9}
  P_9x_-=\lambda x_+ ~\mathrm{with}~\lambda^2=\frac{|x_-|^2}{|x_+|^2}.
\end{equation}
At the mean time, we decompose $\mathbb{R}^{32}$ again as
$$\mathbb{R}^{32}=(E_+(P)\cap E_+(P_0))\oplus(E_+(P)\cap E_-(P_0))\oplus(E_-(P)\cap E_+(P_0))\oplus(E_-(P)\cap E_-(P_0)),$$
and express $x_+=x_{++}+x_{+-}$ and $x_-=x_{-+}+x_{--}$ with corresponding indexes.
Then it follows from (\ref{P9}) and $P_9P_0=-P_0P_9$ that
\begin{equation}\label{P9x-+}
  P_9x_{-+}=\lambda x_{+-},~~ P_9x_{--}=\lambda x_{++},
\end{equation}
which implies
\begin{equation}\label{|x-+|2}
  |x_{-+}|^2=\lambda^2|x_{+-}|^2,  ~~|x_{--}|^2=\lambda^2|x_{++}|^2.
\end{equation}
If $x_{+}\in E_{\pm}(P_0)$, 
then $P_0x_-=P_0P_9P_9x_-=\lambda P_0P_9x_+=-\lambda P_9P_0x_+=\mp \lambda P_9x_+=\mp x_-$, and further $P_0Px=P_0P(x_++x_-)=P_0(x_+-x_-)=\pm (x_++x_-)=\pm x$,
that is, $x\in M_-^D$. Analogously, if $x_-\in E_{\pm}(P_0)$, we can also get the same conclusion.

Hence we may assume that $x_{++}$, $x_{+-}$, $x_{-+}$ and $x_{+-}$ are all non-zero. It follows that
\begin{eqnarray*}
&& E_{+}(P)\cap E_{+}(P_0)=\mathrm{Span}\{P_1x_{+-}, \cdots, P_8x_{+-}\}, \\
&& E_{+}(P)\cap E_{-}(P_0)=\mathrm{Span}\{P_1x_{++}, \cdots, P_8x_{++}\}, \\
&& E_{-}(P)\cap E_{+}(P_0)=\mathrm{Span}\{P_1x_{--}, \cdots, P_8x_{--}\}, \\
&&  E_{-}(P)\cap E_{-}(P_0)=\mathrm{Span}\{P_1x_{-+}, \cdots, P_8x_{-+}\}.
\end{eqnarray*}
In particular, we derive that
\begin{equation*}
  |x_{++}|^2=\sum_{i=1}^8\langle x_{++}, P_i\frac{x_{+-}}{|x_{+-}|}\rangle^2, ~~ |x_{--}|^2=\sum_{i=1}^8\langle x_{--}, P_i\frac{x_{-+}}{|x_{-+}|}\rangle^2,
\end{equation*}
and thus
\begin{equation*}
  \sum_{i=1}^8\langle x_{--}, P_ix_{-+}\rangle^2=|x_{--}|^2|x_{-+}|^2=\lambda^4|x_{++}|^2|x_{+-}|^2=\lambda^4\sum_{i=1}^8\langle x_{++}, P_ix_{+-}\rangle^2.
\end{equation*}
Therefore, it follows from (\ref{|x-+|2}) that
\begin{equation*}
\langle Px, P_0x\rangle^2=(|x_{++}|^2-|x_{+-}|^2-|x_{-+}|^2+|x_{--}|^2)^2=(1+\lambda^2)^2(|x_{++}|^2-|x_{+-}|^2)^2,
\end{equation*}
and from (\ref{P9x-+}) that
\begin{eqnarray*}
\sum_{i=1}^8\langle Px, P_ix\rangle^2&=& \sum_{i=1}^8\langle x_{++}+x_{+-}-x_{-+}-x_{--}, P_i(x_{++}+x_{+-}+x_{-+}+x_{--})\rangle^2 \\
   &=& 4\sum_{i=1}^8(\langle x_{++}, P_ix_{+-}\rangle-\langle x_{--}, P_ix_{-+}\rangle)^2 \\
   &=& 4\sum_{i=1}^8(\langle x_{++}, P_ix_{+-}\rangle+\langle P_9x_{--}, P_iP_9x_{-+}\rangle)^2 \\
   &=& 4\sum_{i=1}^8(1+\lambda^2)^2\langle x_{++}, P_ix_{+-}\rangle^2\\
   &=& 4(1+\lambda^2)^2|x_{++}|^2|x_{+-}|^2.
\end{eqnarray*}
Putting the two equalities above together, we obtain
\begin{equation*}
\sum_{i=0}^8\langle Px, P_ix\rangle^2=(1+\lambda^2)^2|x_{+}|^4=|x|^4=1.
\end{equation*}
The proof of Assertion \ref{assert1-2} is complete now.
\end{proof}

Now we are in a position to show that for $x\in S^{31}(1)$, $\langle x, Px\rangle^2+\langle x, P_9x\rangle^2=1$ implies $x\in C_A^D$.
As we have shown $x\in M_-^D$,  there exists $Q_0\in \Sigma(P_0,\cdots, P_8)$ such that $Q_0Px=x$.

If $x\in E_{\pm}(P)\cap S^{31}(1)$, i.e., $Px=\delta x$ with $\delta=\pm 1$, we have $Q_0x=Px=\delta x$. Hence $x\in E_+(Q_0P)\cap E_{\delta}(Q_0)$.
On the other hand, turning (\ref{E+-}) to the definite case we get
\begin{equation}\label{E-}
E_-(Q_0P)=T_x^{\bot}M_-^D\oplus \mathrm{Span}\{Q_1Px,\cdots, Q_8Px\}.
\end{equation}
Furthermore, using a similar argument as before, one can verify that each of $E_{\pm}(Q_0P)\cap E_{\pm}(Q_0)$ has dimension $8$.
Noticing that $Q_iP$ anti-commutes with both $Q_0P$ and $Q_0$, it can be shown that
$\mathrm{Span}\{Q_1Px,\cdots, Q_8Px\}=E_-(Q_0P)\cap E_{-\delta}(Q_0)$,
and thus
$T_x^{\bot}M_-^D=E_-(Q_0P)\cap E_{\delta}(Q_0)$. This argument implies that
\begin{eqnarray*}
Q_iPQ_jP: &&\mathrm{Span}\{Q_1Px,\cdots, Q_8Px\}=E_-(Q_0P)\cap E_{-\delta}(Q_0)\\
&\rightarrow& E_-(Q_0P)\cap E_{-\delta}(Q_0)=\mathrm{Span}\{Q_1Px,\cdots, Q_8Px\},
\end{eqnarray*}
for $i,j=1,\cdots, 8, i\neq j$, which is equivalent to say $x\in C_A^D$ by (\ref{Qix}).

From now on, we may assume $x=x_++x_-\in E_+(P)\oplus E_-(P)$ with $x_{\pm}\neq 0$. Decompose $\mathbb{R}^{32}$ again as
\begin{eqnarray*}
\mathbb{R}^{32}&=&(E_+(P)\cap E_+(Q_0))\oplus(E_+(P)\cap E_-(Q_0))\\
&\oplus&(E_-(P)\cap E_+(Q_0))\oplus(E_-(P)\cap E_-(Q_0)),
\end{eqnarray*}
and express $x_+=x_{++}+x_{+-}$ and $x_-=x_{-+}+x_{--}$ with corresponding indexes.
From (\ref{P9}), we can also derive that
\begin{equation}\label{P9 definite}
  P_9x_{-+}=\lambda x_{+-},~~ P_9x_{--}=\lambda x_{++}.
\end{equation}
Since $Q_0x=Px$, we have $x_{+-}=x_{-+}=0$, i.e., $x=x_{++}+x_{--}$ and $x_{++}\neq 0$, $x_{--}\neq 0$.
A simple argument shows
\begin{eqnarray}\label{caps}
 E_{+}(P)\cap E_{+}(Q_0)&=&\mathrm{Span}\{x_{++}, Q_1Q_2x_{++}, \cdots, Q_1Q_8x_{++}\}, \nonumber\\
 E_{+}(P)\cap E_{-}(Q_0)&=&\mathrm{Span}\{Q_1x_{++}, Q_2x_{++}, \cdots, Q_8x_{++}\}, \\
 E_{-}(P)\cap E_{+}(Q_0)&=&\mathrm{Span}\{Q_1x_{--}, Q_2x_{--}, \cdots, Q_8x_{--}\}, \nonumber\\
 E_{-}(P)\cap E_{-}(Q_0)&=&\mathrm{Span}\{x_{--}, Q_1Q_2x_{--}, \cdots, Q_1Q_8x_{--}\}. \nonumber
\end{eqnarray}
Thus it follows from (\ref{P9 definite}) and (\ref{caps}) that $\langle Q_iPx, Q_jP_9x\rangle=0$, for any $i,j=1,\cdots,8$.
As it is easy to see $Q_iP_9x\in E_-(Q_0P)$, combining with (\ref{E-}), we know
\begin{equation}\label{NM_-D}
T_x^{\bot}M_-^D=\mathrm{Span}\{Q_1P_9x, \cdots, Q_8P_9x\}.
\end{equation}
Furthermore, since
\begin{eqnarray*}
Q_iQ_j=Q_iPQ_jP: && E_{+}(P)\cap E_{-}(Q_0)\rightarrow  E_{+}(P)\cap E_{-}(Q_0), \\
          && E_{-}(P)\cap E_{+}(Q_0)\rightarrow  E_{-}(P)\cap E_{+}(Q_0),
\end{eqnarray*}
we obtain that for any $1\leq i, j, k, s\leq 8$,
\begin{eqnarray*}
\langle Q_iQ_jQ_kP_9x, Q_sPx\rangle &=& \langle Q_iQ_jQ_k(\lambda^{-1}x_{--}+\lambda x_{++}), Q_s(x_{++}-x_{--})\rangle \\
   &=& -\lambda^{-1}\langle Q_iQ_jQ_kx_{--}, Q_sx_{--}\rangle +\lambda\langle Q_iQ_jQ_kx_{++}, Q_sx_{++}\rangle\\
   &=& -\lambda^{-1}\langle Q_iQ_jQ_kx_{--}, Q_sx_{--}\rangle +\lambda^{-1}\langle Q_iQ_jQ_kP_9x_{--}, Q_sP_9x_{--}\rangle \\
   &=& 0.
\end{eqnarray*}
Hence by using (\ref{NM_-D}), we finally obtain
$$Q_iQ_j:~T_x^{\bot}M_-^D\rightarrow T_x^{\bot}M_-^D,$$
which implies $x\in C_A^D$. The proof is now complete.
\end{proof}
\begin{cor}
$C_A^D$ is isometric to  $\tiny{(S^1(1)\times S^{15}(1))/\mathbb{Z}_2}$, diffeomorphic to $S^1\times S^{15}$.
\end{cor}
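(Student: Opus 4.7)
The strategy is to parametrize $C_A^D$ via the characterization from Proposition \ref{C_A G}: $x\in C_A^D$ iff $\langle x,Px\rangle^2+\langle x,P_9x\rangle^2=1$ for $x\in S^{31}(1)$. The key observation is that $P$ and $P_9$ are symmetric, orthogonal, anti-commuting involutions with vanishing trace (the latter because $\{P_0,\ldots,P_9\}$ is a symmetric Clifford system). Hence for any $(a,b)\in S^1$ the combination $Q(a,b):=aP+bP_9$ satisfies $Q(a,b)^2=(a^2+b^2)\mathrm{Id}=\mathrm{Id}$ and has trace zero, so its $\pm 1$-eigenspaces are each $16$-dimensional. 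A short argument then shows $x\in C_A^D$ iff $Q(a,b)x=x$ for some $(a,b)\in S^1$: setting $a=\langle x,Px\rangle$ and $b=\langle x,P_9x\rangle$, one has $\langle Q(a,b)x,x\rangle=a^2+b^2=1=|Q(a,b)x|\cdot|x|$, so equality in Cauchy--Schwarz forces $Q(a,b)x=x$; conversely, if $Q(a,b)x=x$, applying $P$ and using the skew-symmetry of $PP_9$ (so $\langle x,PP_9x\rangle=0$) recovers $a=\langle x,Px\rangle$ and $b=\langle x,P_9x\rangle$, whence $a^2+b^2=1$.

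Next I would construct a one-parameter subgroup of $SO(32)$ realizing all the operators $Q(a,b)$ as conjugates of $P$. Since $(PP_9)^2=-\mathrm{Id}$, the element $U(\psi):=\cos\psi\,\mathrm{Id}+\sin\psi\,PP_9$ is a $2\pi$-periodic rotation, and a direct calculation using $P\cdot PP_9=P_9$, $(PP_9)P=-P_9$ and $(PP_9)P(PP_9)=P$ yields
\[
U(\psi)^{-1}\,P\,U(\psi)=\cos(2\psi)\,P+\sin(2\psi)\,P_9.
\]
Consequently $E_+(\cos 2\psi\,P+\sin 2\psi\,P_9)=U(\psi)\,E_+(P)$, so every $x\in C_A^D$ lies in $U(\psi)E_+(P)\cap S^{31}(1)$ for some $\psi$. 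Fixing an isometric identification of $E_+(P)\cap S^{31}(1)$ with $S^{15}(1)$, I would define the parametrization
\[
\Phi: S^1(1)\times S^{15}(1)\longrightarrow C_A^D,\qquad (\psi,v)\longmapsto U(\psi)v,\quad \psi\in\mathbb{R}/2\pi\mathbb{Z},
\]
which is surjective by the previous paragraph.

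To determine the fibers, suppose $U(\psi)v=U(\psi')v'$ with $v,v'\in E_+(P)\cap S^{31}(1)$. Then $v'=U(\psi-\psi')v$, and the requirement $Pv'=v'$ combined with $Pv=v$ and $P_9v\in E_-(P)$ forces $\sin(\psi-\psi')\,P_9v=0$; invertibility of $P_9$ yields $\psi'\equiv\psi\pmod{\pi}$. The case $\psi'=\psi$ gives $v'=v$, while $\psi'=\psi+\pi$ gives $U(\pi)=-\mathrm{Id}$ and hence $v'=-v$. Thus $\Phi$ descends to a bijection from $(S^1(1)\times S^{15}(1))/\mathbb{Z}_2$, with $\mathbb{Z}_2$ acting antipodally on both factors, onto $C_A^D$. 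That $\Phi$ is an isometry follows because $U(\psi)$ is orthogonal, $|\partial_\psi\Phi|=|U(\psi)PP_9v|=1$, and the orthogonality of $\partial_\psi\Phi$ with the image of $T_vS^{15}(1)$ reduces to $\langle PP_9v,w\rangle=\langle P_9v,w\rangle=0$ for $w\in E_+(P)$, valid since $P_9v\in E_-(P)$. Finally, $(S^1\times S^{15})/\mathbb{Z}_2\cong S^1\times S^{15}$ because the antipodal map on the odd-dimensional sphere $S^{15}$ lies in the one-parameter subgroup $\{\exp(t\pi J)\}_{t\in[0,1]}\subset SO(16)$ for any complex structure $J$ on $\mathbb{R}^{16}$, hence is isotopic to the identity and the associated $S^{15}$-bundle over $S^1$ is trivial. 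The main technical step is the rotation identity for $U(\psi)^{-1}PU(\psi)$; once this is established, the rest is a dimension count and careful bookkeeping with the Clifford relations.
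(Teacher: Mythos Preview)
Your argument is correct and essentially self-contained, whereas the paper's proof is a two-line citation. The paper simply observes that $\{P,P_9\}$ is a symmetric Clifford system on $\mathbb{R}^{32}$ (with $m=1$, $l=16$), so the equation $\langle x,Px\rangle^2+\langle x,P_9x\rangle^2=1$ is precisely the defining equation of the focal submanifold $M_-$ for the associated OT-FKM family; it then quotes \cite{TY13} for the isometry $M_-\cong (S^1(1)\times S^{15}(1))/\mathbb{Z}_2$. What you do is effectively unpack that citation: the conjugation identity $U(\psi)^{-1}PU(\psi)=\cos(2\psi)P+\sin(2\psi)P_9$ exhibits the Clifford sphere $\Sigma(P,P_9)$ as a rotation orbit, and your parametrization $\Phi(\psi,v)=U(\psi)v$ is exactly the explicit map underlying the general structure theorem for $M_-$ when $m=1$. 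For the diffeomorphism, the paper instead invokes the Hopf line bundle $\eta$ over $S^1$, identifying $(S^1\times S^{15})/\mathbb{Z}_2$ with the sphere bundle of $16\eta$ and using that $2\eta$ is trivial; your mapping-torus argument via the connectedness of $SO(16)$ is an equivalent, slightly more hands-on way to reach the same conclusion. Both routes are valid; yours has the virtue of being independent of the external reference.
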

\begin{proof}
Noticing that $\{P, P_9\}$ constitute a Clifford system on $\mathbb{R}^{32}$,
the equation $\langle x, Px\rangle^2+\langle x, P_9x\rangle^2=1$ characterizes exactly the focal submanifold $M_-$ of the OT-FKM type isoparametric polynomial with respect to this Clifford system.
It follows from \cite{TY13} that $C_A^D\cong M_-\cong \tiny{(S^1(1)\times S^{15}(1))/\mathbb{Z}_2}$, where we identify $(t, x)\in S^1(1)\times S^{15}(1)$ with $(-t, -x)$.

To prove the second conclusion, let $\eta$ be the Hopf line bundle over $S^1$. Then $(S^1\times S^{15})/\mathbb{Z}_2$ is diffeomorphic to the
total space of the sphere bundle of $16\eta$ (\cite{TXY12}). As $2\eta$ is trivial, $(S^1\times S^{15})/\mathbb{Z}_2$ is diffeomorphic to $S^1\times S^{15}$.
\end{proof}

\section{The set $C_P$-Parallel second fundamental form}\label{sec-C_P}
This section is devoted to the determination of the set $C_P$, i.e., the set of points at which the covariant derivative of the second fundamental form
vanishes, in focal submanifolds of
an isoparametric hypersurface in a unit sphere with $g=4$.
As we mentioned before, the isoparametric families with $g=4$
are either of OT-FKM type or homogeneous with $(2, 2)$, $(4, 5)$. Firstly, we have
\begin{prop}\label{CP-homo}
For the homogeneous case with $(2, 2)$, $C_P=\emptyset$ in the focal submanifold diffeomorphic to $\mathbb{C}P^3$, and $C_P$ in the focal submanifold diffeomorphic to $\widetilde{G}_2(\mathbb{R}^5)$ is the whole focal submanifold. For the homogeneous case with $(4, 5)$, $C_P=\emptyset$  in both focal submanifolds.
\end{prop}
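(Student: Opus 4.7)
The plan splits naturally according to the two homogeneous multiplicities.

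For the $(4,5)$ case, I would invoke the observation closing Section \ref{sec-ineq}: whenever $2m_2-m_1-2>0$, the Schwartz-type lower bound in (\ref{2nd}) strictly exceeds the parallel-SFF lower bound in (\ref{1st}), so the latter can never be attained and $C_P=\emptyset$ automatically. On $M_+$ with $(m_1,m_2)=(4,5)$ this gives $2m_2-m_1-2=4>0$. On $M_-$ the roles of the multiplicities are swapped (since the nonzero eigenvalues of each shape operator on $M_-$ have multiplicity $m_1$ rather than $m_2$), so the inequalities are applied with $(m_1,m_2)=(5,4)$, yielding $2m_2-m_1-2=1>0$. Both focal submanifolds therefore satisfy $C_P=\emptyset$ with no further work.

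For the $(2,2)$ case, since $m_1=m_2=2$ we have $2m_2-m_1-2=0$, so the two lower bounds in (\ref{1st}) and (\ref{2nd}) coincide; equivalently, $C_P=C_E$ on each focal submanifold. The plan is then to combine two ingredients already at hand. First, homogeneity of the $(2,2)$-family implies $\|\overline{\nabla}S\|^2$ is constant on each focal submanifold, so by the reformulated Simons identity of Section \ref{sec-ineq} the function $\rho^\bot$ is constant and $C_P$ is either the whole focal submanifold or empty. Second, the Einstein/non-Einstein dichotomy established in \cite{QTY13} distinguishes the two: $M_-\cong\widetilde{G}_2(\mathbb{R}^5)$ is Einstein, giving $C_E=M_-$ and hence $C_P=M_-$; while $M_+\cong\mathbb{C}P^3$ is not Einstein (with its induced metric from $S^9(1)$), giving $C_E=\emptyset$ and hence $C_P=\emptyset$. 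This is the $(2,2)$ content already recorded in Proposition \ref{221k43}(3), so I would simply cite it.

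The proof therefore requires no new computation beyond what has been done in Sections \ref{sec-ineq} and \ref{sec-C_A}. The only nontrivial external input is \cite{QTY13}'s Einstein classification in the $(2,2)$ case. The sole bookkeeping point to be careful about is the interchange of multiplicities when applying (\ref{1st}) and (\ref{2nd}) on $M_-$ versus $M_+$; once this is correctly handled, the $(4,5)$ case is settled by a one-line inequality and the $(2,2)$ case by citation. I do not anticipate any substantive obstacle.
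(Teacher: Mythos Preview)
Your proposal is correct and follows essentially the same approach as the paper: for $(4,5)$ you use the strict inequality $2m_2-m_1-2>0$ (with the multiplicities swapped on $M_-$) to rule out $C_P$, and for $(2,2)$ you use $2m_2-m_1-2=0$ to identify $C_P=C_E$ and then invoke homogeneity together with the Einstein classification of \cite{QTY13}. The only minor difference is that you spell out the homogeneity step via the constancy of $\rho^\bot$, whereas the paper simply appeals to homogeneity directly; both are fine.
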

\begin{proof}
 Clearly, in $(2, 2)$ case, $2m_2-m_1-2=0$, $2m_1-m_2-2=0$,
the first equality in (\ref{1st}) and that in (\ref{2nd}) both hold, and therefore $C_P=C_E$ in both focal submanifolds.
However, by the homogeneity and the classification of Einstein manifolds in \cite{QTY13}, $C_E$ is completely
determined. More precisely, $C_E=\emptyset$ in the focal submanifold diffeomorphic to $\mathbb{C}P^3$, and $C_E$ in the focal submanifold diffeomorphic to $\widetilde{G}_2(\mathbb{R}^5)$ is the whole focal submanifold. As for the $(4, 5)$ case, it is easy to see that the lower bound of $\rho^{\bot}$ in (\ref{2nd}) is bigger than that
in (\ref{1st}), and thus $C_P=\emptyset$ in both submanifolds, as desired.
\end{proof}

Next, let us turn to $M_-$ of OT-FKM type. We need to prepare the following
\begin{lem}
On the focal submanifolds of isoparametric hypersurfaces with $g=4$, if at some point, the second fundamental form is parallel, so is the Ricci tensor.
\end{lem}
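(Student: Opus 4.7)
The plan is to reduce the statement to a purely algebraic computation with the shape operators. By the Gauss equation and minimality of the focal submanifold in $S^{n+1}(1)$, the Ricci tensor can be written, as in (\ref{Ric }), as
$$Ric(X,Y) = (\dim M_+ - 1)\langle X, Y\rangle - \langle T(X), Y\rangle, \qquad T := \sum_{\alpha=0}^{m_1} S_\alpha^2.$$
A short calculation using the metric compatibility of the Levi-Civita connection gives $(\nabla_Z Ric)(X,Y) = -\langle (\nabla_Z T)(X), Y\rangle$. Moreover, $T$ is independent of the choice of local orthonormal normal frame $\{n_\alpha\}$ (as one checks using the orthogonal transition matrices on the normal bundle), so it is a genuine intrinsic $(1,1)$-tensor on $M_\pm$. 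Thus it suffices to show that $\nabla T$ vanishes at the distinguished point.

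Next, I would relate the intrinsic derivative $\nabla_Z S_\beta$ (viewing each $S_\beta$ as a $(1,1)$-tensor on $TM$ and using only the Levi-Civita connection) to the full covariant derivative $\overline{\nabla}_Z h$ of the vector-valued second fundamental form. Writing $h(X,Y) = \sum_\beta \langle S_\beta X, Y\rangle n_\beta$ and expanding the definition of $\overline{\nabla} h$, which incorporates the normal connection $\nabla^\perp$, one obtains
$$\langle (\overline{\nabla}_Z h)(X,Y), n_\beta\rangle = \langle (\nabla_Z S_\beta)(X), Y\rangle + \sum_{\alpha=0}^{m_1} \omega_\alpha^\beta(Z)\,\langle S_\alpha X, Y\rangle,$$
where $\omega_\alpha^\beta$ are the connection one-forms of the normal bundle in the frame $\{n_\alpha\}$, defined by $\nabla^\perp_Z n_\alpha = \sum_\beta \omega_\alpha^\beta(Z)\,n_\beta$. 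Hence at any point of $C_P$, where $\overline{\nabla} h = 0$, this forces $\nabla_Z S_\beta = -\sum_\alpha \omega_\alpha^\beta(Z)\, S_\alpha$ for each $\beta$ and each tangent vector $Z$.

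Substituting this identity into $\nabla_Z T = \sum_\beta \bigl[(\nabla_Z S_\beta) S_\beta + S_\beta (\nabla_Z S_\beta)\bigr]$ yields
$$\nabla_Z T = -\sum_{\alpha,\beta} \omega_\alpha^\beta(Z)\bigl(S_\alpha S_\beta + S_\beta S_\alpha\bigr),$$
which vanishes because $\omega_\alpha^\beta = -\omega_\beta^\alpha$ (the normal connection is metric-compatible) while $S_\alpha S_\beta + S_\beta S_\alpha$ is symmetric in $(\alpha,\beta)$. This gives $\nabla T = 0$, and hence $\nabla Ric = 0$, at the point in question. There is no real obstacle; the only delicate part is the bookkeeping needed to identify the correct normal-connection correction term relating the two notions of derivative of the shape operators, and then noticing the symmetric/antisymmetric cancellation. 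Note that neither $g = 4$ nor any other isoparametric datum is used beyond the minimality implicit in the Gauss-equation form of $Ric$, so the lemma in fact holds for any minimal submanifold of a space form with $\overline{\nabla} h$ vanishing at a point.
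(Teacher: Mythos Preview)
Your argument is correct and follows the same underlying idea as the paper: express the Ricci tensor via the shape operators using the Gauss equation and minimality (formula (\ref{Ric })), and then observe that parallelism of the second fundamental form forces parallelism of $T=\sum_\alpha S_\alpha^2$. The paper's own proof is a one-line reference to \cite{LZ16}, whereas you have supplied the full computation, including the correct bookkeeping of the normal-connection terms and the symmetric/antisymmetric cancellation; your closing remark that only minimality in a space form is used, not any isoparametric data, is a valid and worthwhile observation of greater generality.
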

\begin{proof}
Recall that the Ricci tensor can be expressed by the shape operators as in (\ref{Ric }). The conclusion follows at once (cf. \cite{LZ16}).
\end{proof}

\begin{lem}
On $M_-$ of OT-FKM type with $m\geq2$, except for the cases of $(5, 2)$, $(6, 1)$ and $(9, 6)$, the Ricci tensor is not parallel at any point.
\end{lem}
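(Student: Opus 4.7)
The plan is to reduce the statement to showing $\nabla(\sum_\beta S_\beta^2) \neq 0$ at every point $x \in M_-$, and then to compute this covariant derivative explicitly using the OT-FKM structure, adapting the pointwise version of the global computations carried out in \cite{TY15} and \cite{LZ16}. Since the Ricci operator equals $(\dim M_- - 1)\mathrm{Id} - \sum_\beta S_\beta^2$ by the analogue of (\ref{Ric }) on $M_-$, and the identity operator is parallel, parallelism of $\mathrm{Ric}$ is equivalent to parallelism of the self-adjoint endomorphism $\sum_\beta S_\beta^2$ on $T_x M_-$. Thus it suffices to produce, at an arbitrary $x\in M_-$, a tangent direction $X$ for which $\nabla_X(\sum_\beta S_\beta^2)\neq 0$.

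To set up the computation, I would fix $x\in M_-$ and, using the fact that $\sum_\alpha\langle P_\alpha x,x\rangle^2=1$ defines $M_-$, rotate the Clifford system $\{P_0,\ldots,P_m\}$ by an element of $O(m+1)$ so that $P_0 x = x$. This gives the normal frame from \textbf{4.5}(iii) of \cite{FKM81} and the tangent frame (\ref{o.n.b of TxM2}), in which the shape operators $S_\beta$ are expressible through the action $Y\mapsto -(NY)^T$ of the Clifford generators. Applying the Codazzi equation $(\nabla_X S_N)Y-(\nabla_Y S_N)X$ modulo the normal connection $\nabla^\perp N$, together with the identities of \cite{OT75} recalled in (\ref{OT formula}), one can reduce $\nabla_X(\sum_\beta S_\beta^2)$ to a polynomial expression in the $S_\beta$'s whose coefficients are the tangent components of triple Clifford products $P_\alpha P_\beta P_\gamma x$. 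Evaluating this expression on tangent vectors of the form $P_i P_j x$ or $P_i N_j$ should isolate a single Clifford-algebraic invariant whose vanishing at $x$ is equivalent to $\nabla_X \mathrm{Ric} = 0$ for that choice of $X$.

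The final step is a case analysis: translate the vanishing condition into a numerical constraint on the pair $(m,k)$ with $l=k\delta(m)$, and verify that it fails for all OT-FKM multiplicities with $m\geq 2$ except precisely $(5,2)$, $(6,1)$ and $(9,6)$. These three exceptions should correspond to the small-$k$ coincidences in the irreducible Clifford module dimensions where the obstruction degenerates — analogous to how special multiplicity pairs produce the isolated austere or Einstein examples in the other parts of Theorem \ref{thm1}.

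The main obstacle is the intermediate Codazzi reduction: the tangent space decomposes into three qualitatively different subblocks (spanned by $Q_\alpha x$, by the tangential parts of $Q_\alpha N_\beta$, and by $Q_iQ_\alpha x$), and each transforms differently under the Clifford action, so one must carefully keep track of mixed terms to ensure that the eventual obstruction is not accidentally killed by orthogonality. Once the obstruction is identified as a specific Clifford-algebra expression, ruling out all but three multiplicity pairs is a finite check using the table of $\delta(m)$, and in fact can be reduced to comparing the dimension $\dim M_-$ with the rank of the span $\{P_\alpha P_\beta P_\gamma x\}$ appearing in the obstruction.
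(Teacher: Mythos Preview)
The paper's proof of this lemma is a single sentence: it observes that the argument in Proposition~4.2 of \cite{TY15} (showing $M_-$ of OT-FKM type with $m\geq 2$ is not Ricci parallel, except for the three listed multiplicities) is already carried out pointwise, so the global non-parallelism there in fact gives non-parallelism at every point. No new computation is performed.

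Your proposal is, in effect, to unpack and redo that cited computation. The strategy you outline --- reduce parallelism of $\mathrm{Ric}$ to parallelism of $\sum_\beta S_\beta^2$ via the Gauss equation, rotate the Clifford system so that $P_0x=x$, work in the explicit tangent frame of \cite{FKM81}, and evaluate $\nabla_X(\sum_\beta S_\beta^2)$ on distinguished tangent directions --- is indeed the shape of the argument in \cite{TY15}, and is correct in approach. So there is no wrong idea here; you are simply proposing to reproduce the content behind the citation rather than invoke it.

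That said, what you have written is a plan, not a proof. The substantive work --- actually identifying the explicit Clifford-algebraic obstruction, checking it is nonzero at every point, and carrying out the case analysis over $(m,k)$ that singles out $(5,2)$, $(6,1)$, $(9,6)$ --- is only described, not performed. Your final two paragraphs acknowledge this. If you intend a self-contained proof rather than a citation, you must execute the Codazzi reduction and the numerical case check; otherwise, the paper's one-line reference to \cite{TY15} already suffices and is what the authors chose.
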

\begin{proof}
In Proposition 4.2 of \cite{TY15}, except for the cases of $(5, 2)$, $(6, 1)$ and $(9, 6)$, the proof for $M_-$ of OT-FKM type with $m\geq2$ are not Ricci parallel is actually pointwise.
\end{proof}

Now we are ready to show the following
\begin{prop}\label{M-Cp}
On the focal submanifold $M_-$ of OT-FKM type, $C_P=M_-$ for $m=1$; $C_P=\emptyset$ for $m\geq 2$.
\end{prop}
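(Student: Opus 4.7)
I split according to whether $m=1$ or $m\ge 2$.

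\textit{Case $m=1$.} As recalled in the proof of Proposition~\ref{221k43}, the OT-FKM family with multiplicities $(1,m_2)$ is homogeneous, so $\rho^{\bot}$ is a constant function on $M_-$. By the Simons-identity reduction in Section~\ref{sec-ineq} (with $m_1,m_2$ swapped on $M_-$),
\begin{equation*}
  \|\overline{\nabla} S\|^2 \;=\; \rho^{\bot}-2m_1m_2(m_2+1),
\end{equation*}
which is then constant as well. Hence $C_P$ is either all of $M_-$ or empty, and it suffices to verify at a single point that $\rho^{\bot}=2m_1m_2(m_2+1)$. I choose $x\in M_-$ with $Q_0x=x$ for some unit $Q_0\in\Sigma(P_0,P_1)$, extend to a symmetric Clifford basis $\{Q_0,Q_1\}$ of $\Sigma(P_0,P_1)$, and use the FKM description of $T_x^{\bot}M_-$ together with the tangent frame~(\ref{o.n.b of TxM2}) from Section~\ref{subsec-87} to write the shape operators $S_{N_i}$ in the block form~(\ref{sff}) (with $m_1,m_2$ swapped). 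Because the Clifford system comprises only the two anticommuting matrices $P_0,P_1$, there is no residual anticommuting structure to support non-trivial off-diagonal $A_\alpha$-blocks between $V_+$ and $V_-$, and a direct bookkeeping yields $A_\alpha=0$ for every $\alpha$. Feeding this into the inequality chain of Section~\ref{sec-ineq} saturates the lower bound, giving $\rho^{\bot}(x)=2m_1m_2(m_2+1)$ and hence $C_P=M_-$.

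\textit{Case $m\ge 2$.} I combine the two lemmas just established with an arithmetic check on three exceptional pairs. By the first lemma, a point in $C_P$ forces the Ricci tensor to be parallel at that point; by the second lemma, outside of the three multiplicity pairs $(5,2),(6,1),(9,6)$, the Ricci tensor on $M_-$ is pointwise non-parallel. Hence $C_P=\emptyset$ in all non-exceptional cases.

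For the three exceptional pairs I appeal to the observation at the end of Section~\ref{sec-ineq}, now swapped to $M_-$: whenever $2m_1-m_2-2>0$, the lower bound of the Einstein inequality~(\ref{2nd}) on $M_-$ strictly exceeds the lower bound of~(\ref{1st}), so the latter (which characterizes $C_P$) is unattainable. A direct computation gives $2m_1-m_2-2=6,9,10$ for $(5,2),(6,1),(9,6)$ respectively, all strictly positive, hence $C_P=\emptyset$ in each exceptional case as well.

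\textit{Main obstacle.} The substance of the argument is concentrated in Case $m=1$: although homogeneity collapses the problem to a pointwise check, the explicit verification that every $A_\alpha$-block vanishes still relies on careful manipulation of the two-matrix Clifford algebra and the FKM shape-operator formulas. The remaining case $m\ge 2$ then follows immediately from the preceding two lemmas together with the short arithmetic check for $(5,2),(6,1),(9,6)$.
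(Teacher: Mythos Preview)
Your handling of the case $m\geq 2$ is correct and in fact slightly cleaner than the paper's. Both arguments use the two preceding lemmas to dispose of the non-exceptional multiplicities, but for the three exceptional pairs $(5,2),(6,1),(9,6)$ the paper invokes homogeneity and then cites \cite{LZ16} for non-parallelism, whereas you simply observe that $2m_1-m_2-2>0$ in each case, so that the Einstein lower bound~(\ref{2nd}) on $M_-$ strictly dominates the parallel lower bound~(\ref{1st}) and $C_P=\emptyset$ follows immediately. That shortcut is a genuine simplification.

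The case $m=1$, however, has a gap. You correctly reduce to a single pointwise check by homogeneity, but the sentence ``there is no residual anticommuting structure to support non-trivial off-diagonal $A_\alpha$-blocks \ldots\ and a direct bookkeeping yields $A_\alpha=0$'' is not a proof. The $A_\alpha$-blocks are components of the shape operators of $M_-$, not of the Clifford matrices themselves, and the passage from ``only two $P_i$'s'' to ``$A_\alpha=0$'' requires the explicit FKM shape-operator formula. Concretely, with $Q_0x=x$ and normal frame $N_0,\ldots,N_{l-2}$, one has $V_+(S_{N_0})\oplus V_-(S_{N_0})=\mathrm{Span}\{Q_1x,\,Q_1N_0\}$, and the vanishing of $A_\alpha$ amounts to $\langle S_{N_\alpha}(Q_1x),\,Q_1x\rangle=0$; this follows once you know $S_{N}(Q_1x)=\pm Q_1N$, but that identity must be established (from \cite{FKM81}) and you have not done so. The paper sidesteps this entirely by citing \cite{LZ16}, where the parallelism of the second fundamental form of $M_-$ for $m=1$ is proved directly.
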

\begin{proof}
Since $M_-$ with $m=1$ is homogeneous, the first conclusion follows from \cite{LZ16}.
When $m\geq 2$, notice that the three cases with $(5, 2)$, $(6, 1)$ and $(9, 6)$ are all homogeneous and
\cite{LZ16} showed that $M_-$ in these cases are not parallel.
Combining with the two lemmas above, we arrive at that $C_P=\emptyset$ for $m\geq 2$.
\end{proof}

Lastly, we will conclude this section with the following
\begin{prop}\label{M+Cp}
On the focal submanifold $M_+$ of OT-FKM type, $C_P=M_+$ in the $(2, 1)$ and $(6, 1)$ cases; $C_P=C_E=M_+$ in the $(4, 3)$ definite case, and $C_P=\emptyset$ in other cases.
\end{prop}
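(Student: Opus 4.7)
The plan is to enumerate the OT-FKM multiplicity pairs that appear on $M_+$ and dispatch them with four complementary tools: the lower bound in (\ref{2nd}) whenever $2m_2-m_1-2>0$, the pinning $\rho^{\bot}\equiv 8m_1m_2(m_1+1)$ supplied by Proposition \ref{221k43} when $C_A=M_+$, the three FKM congruences recorded in Remark \ref{rem1}, and a homogeneity plus symmetric $R$-space argument for the two residual pairs.

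First I would dispose of the ``$C_P=\emptyset$ for free'' entries. On $M_+$ with $(1,k)$, Proposition \ref{221k43} gives $C_A=M_+$, so $\rho^{\bot}$ is pinned to the upper bound $8m_1m_2(m_1+1)$, which is strictly larger than $2m_1m_2(m_1+1)$; hence no point can attain the first equality in (\ref{1st}) and $C_P=\emptyset$. The columns $(3,4k)$ and $(7,8k)$, together with every pair in the ``others'' column for which $2m_2-m_1-2>0$, are handled by (\ref{2nd}) directly: its right-hand side strictly exceeds $2m_1m_2(m_1+1)$, ruling out any point of equality in (\ref{1st}). A direct inspection shows the only ``others'' pairs with $2m_2-m_1-2\le 0$ are indefinite $(4,3)$ and $(5,2)$, to be treated below.

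Next I would invoke the three FKM congruences. The identifications $(2,1)\cong(1,2)$ and $(6,1)\cong(1,6)$ map $M_+$ of $(2,1)$ and $(6,1)$ to $M_-$ of $(1,2)$ and $(1,6)$ respectively; Proposition \ref{M-Cp} yields $C_P=M_-$ in the $m=1$ case, so $C_P=M_+$ for the $(2,1)$ and $(6,1)$ columns. Analogously, indefinite $(4,3)\cong(3,4)$ sends $M_+$ of indefinite $(4,3)$ to $M_-$ of $(3,4)$, and Proposition \ref{M-Cp} with $m=3\ge 2$ then gives $C_P=\emptyset$, accounting for the indefinite $(4,3)$ entry in ``others''.

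Two cases remain: the borderline $(4,3)$ definite and the delicate $(5,2)$. For $(4,3)$ definite one has $2m_2-m_1-2=0$, so the lower bounds of (\ref{1st}) and (\ref{2nd}) coincide and $C_P=C_E$ as subsets of $M_+$; since this family is homogeneous with $M_+$ realized as a symmetric $R$-space carrying a globally parallel second fundamental form, we obtain $C_P=C_E=M_+$. The pair $(5,2)$ is the main obstacle, since no FKM congruence is available and (\ref{2nd}) reduces to the vacuous bound $\rho^{\bot}\ge 0$. Here I would first use the homogeneity of this family (exactly the reason $(5,2)$ appears as one of the exceptional homogeneous OT-FKM cases in Proposition \ref{M-Cp}) to reduce $C_P$ to an element of $\{\emptyset,M_+\}$, and then, following the strategy of \cite{LZ16} used in Proposition \ref{M-Cp} for the companion $M_-$ of $(5,2)$, exhibit a non-vanishing component of $\overline{\nabla}S$ by direct computation in the $m=5$ Clifford model, concluding $C_P=\emptyset$. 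This final step is where the real technical work concentrates, since it cannot be subsumed into the inequality framework of Section \ref{sec-ineq} and rests on the specific arithmetic of the $m=5$ Clifford system.
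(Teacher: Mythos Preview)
Your plan matches the paper's proof almost exactly: use inequality (\ref{2nd}) to eliminate all pairs with $2m_2-m_1-2>0$, leaving only $(1,1)$, $(2,1)$, $(4,3)$, $(5,2)$, $(6,1)$ to handle individually, and then dispatch $(1,1)$ via $C_A=M_+$, the indefinite $(4,3)$ via the congruence to $(3,4)$ and Proposition~\ref{M-Cp}, and $(5,2)$ via homogeneity plus the computation in \cite{LZ16}. Your treatment of $(2,1)$ and $(6,1)$ through the FKM congruences and Proposition~\ref{M-Cp} is a neat alternative to the paper's direct citation of \cite{LZ16} for $M_+$, though both ultimately rest on the same source.

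The one point that needs attention is the definite $(4,3)$ case. You assert that $M_+$ is embedded as a symmetric $R$-space and hence has globally parallel second fundamental form. While $M_+\cong Sp(2)$ here, the claim that the OT-FKM embedding realizes it as a symmetric $R$-space is not established anywhere in the paper and would require independent justification. The paper instead runs the logic in the opposite direction: since $2m_2-m_1-2=0$ gives $C_P=C_E$, and \cite{QTY13} proves that this $M_+$ is Einstein, one gets $C_E=M_+$ and hence $C_P=M_+$. That route is shorter and stays entirely within results already quoted.
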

\begin{proof}
Comparing the first inequality in (\ref{1st}) with that in (\ref{2nd}),
we observe that if $2m_2-m_1-2=2(l-m-1)-m-2>0$, $C_P=\emptyset$ in $M_+$. So we are only left to deal with
$(m_1, m_2)=(1, 1)$, $(2, 1)$, $(4, 3)$, $(5, 2)$, and $(6, 1)$. The $(1, 1)$, $(2, 1)$, $(5, 2)$, and $(6, 1)$ cases are all
homogeneous. In the $(1, 1)$ case, $C_A=M_+$ as proved in Section 3, i.e. the upper bound of $\rho^{\bot}$ in (\ref{1st}) is achieved, thus $C_P=\emptyset$.
It was proved in \cite{LZ16} that $C_P=M_+$ in the $(2, 1)$, $(6, 1)$ cases and $C_P=\emptyset$ in the $(5, 2)$ case. While in the $(4, 3)$
case, $2m_2-m_1-2=0$, thus $C_P=C_E$. According to \cite{QTY13}, $M_+$ in the $(4, 3)$ definite case is Einstein, thus we have $C_P=C_E=M_+$.
As mentioned in Remark \ref{rem1}, the indefinite $(4, 3)$ family is congruent to the $(3, 4)$ family. Thus $M_+$ in the $(4, 3)$ indefinite case
is congruent to $M_-$ with $(3, 4)$, whose $C_P=\emptyset$ as we proved in Proposition \ref{M-Cp}.
\end{proof}

\section{The set $C_E$-Einstein condition}\label{sec-C_E}
The task of this section is to determine $C_E$.
\begin{prop}\label{CE}
On the focal submanifolds of isoparametric hypersurfaces with $g=4$, $C_E=M_+$ in $M_+$ of OT-FKM type with definite $(4, 3)$,
$C_E$ in the focal submanifold with homogeneous $(2, 2)$ diffeomorphic to $\widetilde{G}_2(\mathbb{R}^5)$ is the whole focal submanifold, and $C_E=\emptyset$ in other cases.
\end{prop}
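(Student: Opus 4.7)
The plan is to combine the pointwise characterization of $C_E$ derived in Section~\ref{sec-ineq} with the global Einstein classification. From the Gauss equation (\ref{Ric }), a point $x$ lies in $C_E$ if and only if $\sum_{\alpha=0}^{m_1} S_\alpha^2 = c\cdot\mathrm{Id}$ at $x$, where $c=2m_2(m_1+1)/(m_1+2m_2)$ is forced by taking traces. In particular, $C_E$ equals the whole focal submanifold precisely when that submanifold is globally Einstein, so the task splits into producing the two nonempty entries and excluding $C_E$ in every other case.

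Whenever $2m_2-m_1-2=0$ (which occurs for $M_+$ with $(m_1,m_2)=(2,2)$ or $(4,3)$), the two lower bounds in (\ref{1st}) and (\ref{2nd}) coincide, and the equality characterizations force $C_E=C_P$. Propositions~\ref{CP-homo} and \ref{M+Cp} then immediately give $C_E=M_+$ for $(4,3)$ definite and $C_E=\emptyset$ for $(2,2)$ on $M_+$. The remaining nonempty entry, $C_E=\widetilde{G}_2(\mathbb{R}^5)$ for $M_-$ with $(2,2)$, follows from the global Einstein property established in \cite{QTY13}, which guarantees the pointwise Einstein condition everywhere.

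For every homogeneous focal submanifold, the scalar identity $\sum_\alpha S_\alpha^2=c\cdot\mathrm{Id}$ is an isometry-invariant condition at a point, so transitivity forces $C_E$ to be either empty or the full submanifold. Combined with the non-Einstein classification of \cite{QTY13}, this disposes of the homogeneous OT-FKM families $(1,k)$, $(2,1)$, $(6,1)$ on $M_\pm$, the $(4,5)$ homogeneous family, and $M_+\cong\mathbb{C}P^3$ in the $(2,2)$ case, in each of which $C_E=\emptyset$. The congruence in Remark~\ref{rem1} further reduces some OT-FKM cases (for example, $M_+$ with $(4,3)$ indefinite to $M_-$ with $(3,4)$, where the effective multiplicities again satisfy $2m_2'-m_1'-2=0$ and $C_P=\emptyset$ by Proposition~\ref{M-Cp}) to ones already handled.

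The main obstacle is the non-homogeneous OT-FKM families with $2m_2-m_1-2>0$, namely $(3,4k)$ and $(7,8k)$ on $M_+$ (excluding the congruent reductions already treated) and $(8,7)$ definite or indefinite on $M_-$, where neither homogeneity nor the coincidence $C_E=C_P$ is available. The plan is to adapt the pointwise techniques of \cite{QTY13}, \cite{TY15} and \cite{LZ16}: writing the shape operators in the block form (\ref{sff}) and imposing the Ozeki--Takeuchi relations $S_\alpha=S_\alpha^3$ and $S_\alpha=S_\beta^2 S_\alpha+S_\alpha S_\beta^2+S_\beta S_\alpha S_\beta$ from (\ref{OT formula}), the scalar identity $\sum_\alpha S_\alpha^2=c\cdot\mathrm{Id}$ translates into an overdetermined algebraic system on the blocks $A_\alpha, B_\alpha, C_\alpha$; the delicate point is to show, via the Clifford structure of the OT-FKM family, that this system admits no solution at any single point of the focal submanifold.
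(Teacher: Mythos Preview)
Your reduction of the Einstein condition to $\sum_\alpha S_\alpha^2=c\,\mathrm{Id}$ and your handling of the homogeneous cases and of the cases with $2m_2-m_1-2=0$ (via $C_E=C_P$) are correct and match the paper. The gap is entirely in your last paragraph.

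First, you misidentify the residual cases. The paper observes that the non-Einstein arguments of \cite{QTY13} for $M_-$ of OT-FKM type, and for $M_+$ in all but five exceptional multiplicity pairs, are already \emph{pointwise}; hence $C_E=\emptyset$ there with no further work. The five exceptions on $M_+$ are $(4,3)$ indefinite, $(7,8)$, $(8,7)$, $(9,6)$, $(10,21)$. After disposing of $(4,3)$ indefinite via $C_E=C_P$ and of $(9,6)$ and definite $(8,7)$ via homogeneity, only $(7,8)$, indefinite $(8,7)$, and $(10,21)$ on $M_+$ remain. The last two are absent from your list, while your $(3,4k)$, $(7,8k)$ for $k\ge2$, and $(8,7)$ on $M_-$ are already covered by \cite{QTY13} and need no new argument.

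Second, and more seriously, your proposed method for the residual cases --- reducing to an ``overdetermined algebraic system on the blocks $A_\alpha,B_\alpha,C_\alpha$'' --- is not the paper's approach and is too vague to count as a plan. The paper works instead with the explicit function $f(x,X)=\sum_{\alpha<\beta}\langle X,P_\alpha P_\beta x\rangle^2$ on the unit tangent bundle and produces, in each case, a concrete tangent direction that forces $f$ to be non-constant. For $(7,8)$ one takes $X_0$ parallel to $(Px)^T$ with $P=P_0\cdots P_7$; then $f(x,X_0)=0$ on the dense open set where $Px\ne\pm x$, so by continuity $\min_X f\equiv 0$, while $\max_X f\ge1$ always. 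For indefinite $(8,7)$ an explicit Clifford system and the direction parallel to $(P_1\cdots P_8 x)^T$ give $\min_X f\le1$, and the dimension count $\binom{9}{2}=36>22=\dim M_+$ forces $\max_X f>1$. For $(10,21)$ one shows that $f\equiv\mathrm{const}$ would let the $55\times52$ matrix of components of the $P_\alpha P_\beta x$ extend to an orthogonal $55\times55$ matrix; the ten mutually orthogonal rows coming from $P_0P_1x,\ldots,P_0P_{10}x$ would then yield ten nonzero orthogonal vectors in $\mathbb{R}^{55-52}=\mathbb{R}^3$, a contradiction. None of these constructions is suggested by your block-form strategy, and without them the three hard cases remain open.
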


\begin{proof}
\cite{QTY13} determined which focal submanifolds with $g=4$ are Einstein among the OT-FKM type and the homogeneous $(2, 2)$, $(4, 5)$ cases.
Then combining with the classification of isoparametric hypersurfaces in spheres, \cite{QTY13} actually classified completely the Einstein focal submanifolds with $g=4$.
More precisely, except for $M_+$ of OT-FKM type with definite $(4, 3)$, which is diffeomorphic to $Sp(2)$,
and the focal submanifold with $(2, 2)$, which is diffeomorphic to the oriented $\tilde{G}_2(\mathbb{R}^5)$, all the other
focal submanifolds are not Einstein.

As for the OT-FKM type, we notice that the proof in Theorem 1.2 (i) in \cite{QTY13} for $M_-$ to be non-Einstein is pointwise, thus $C_E=\emptyset$; and except for the five cases listed in formula $(15)$ of \cite{QTY13}, the proof for $M_+$ to be non-Einstein is also pointwise, thus $C_E=\emptyset$. So we only need to calculate
$C_E$ in those five cases. Namely, $M_+$ with multiplicities
$$(m_1, m_2)=(m, l-m-1)=(4, 3)(\mathrm{indefinite}), (7, 8), (8, 7), (9, 6), (10, 21).$$

As mentioned in Section \ref{sec-C_P}, in $M_+$ of OT-FKM type with $(4, 3)$, $C_P=C_E$, which is $\emptyset$ in the indefinite case by Proposition \ref{M+Cp}.
Besides, since $M_+$ with $(9, 6)$ and definite $(8, 7)$ are homogeneous, $C_E=\emptyset$ according to the classification in \cite{QTY13}. So we are left to verify pointwisely on $M_+$ with
$(7, 8)$, indefinite $(8, 7)$ and $(10, 21)$.

Recall the expression of Ricci curvature of $X\in T_xM_+$ (\cite{QTY13}):
$$Ric(X)=2(l-m-2)|X|^2+2\sum_{\alpha,\beta=0,\alpha<\beta}^{m}\langle X, P_{\alpha}P_{\beta}x\rangle^2.$$

\noindent
\textbf{(1) the $(7, 8)$ case}. Let us define $P=:P_0\cdots P_7$ as before, and a closed subset $D=:\{x\in M_+~|~Px=\pm x\}\subset M_+$. Given $x\in M_+$, and unit $X\in T_xM_+$, we define
a smooth function on the unit tangent bundle $S(TM_+)$:
$$f=:f(x, X)=:\sum_{\alpha,\beta=0,\alpha<\beta}^{7}\langle X, P_{\alpha}P_{\beta}x\rangle^2.$$ Furthermore, the function $f$ gives rise to a function $g$ on $M_+$ by
$$g(x)=:\min_{X\in T_xM_+,~|X|=1}f(x, X).$$
Clearly, $g$ is a continuous function, since the unit tangent bundle is locally trivial.
If $x\in M_+\backslash D$, we choose $X_0=\frac{Px-\langle Px, x\rangle x}{|Px-\langle Px, x\rangle x|}$. We need to verify that $X_0$ is a unit tangent vector at $x$. In fact, it is easy to see $\langle X_0, x\rangle=\langle X_0, P_{\alpha}x\rangle=0$, $\alpha=0,\cdots, 8$, thus $X_0\in T_xM_+$.  Furthermore, $f(x, X_0)=0$,
since $\langle Px, P_{\alpha}P_{\beta}x\rangle=\langle x, P_{\alpha}P_{\beta}x\rangle=0$ for $\alpha\neq \beta$. This means that
$g|_{M_+\backslash D}\equiv 0$. Noticing that $M_+\backslash D$ in $M_+$ is open and dense, we obtain $g(x)\equiv 0$. However, by the definition, $\max\limits_{X\in T_xM_+,~|X|=1}f(x, X)\geq 1$. Therefore, at any $x\in M_+$, $f(x, X)$ is not constant. In other words, $C_E=\emptyset$ in $M_+$.
\vspace{2mm}

\noindent
\textbf{(2) the indefinite $(8, 7)$ case.} Without loss of generality, we first give a concrete expression of Clifford system on $\mathbb{R}^{32}\cong\mathbb{O}^{4}$.
Write $x\in\mathbb{R}^{32}$ as $x=(u, v)=(u_1, u_2, v_1, v_2)\in\mathbb{O}^{4}$. Let
\begin{eqnarray*}\label{clifford system}
&& P_0x=(u_1, u_2, -v_1, -v_2),~~P_1x=(v_1, v_2, u_1, u_2),\\
&& P_{1+\alpha}x=(e_{\alpha}v_1, -e_{\alpha}v_2, -e_{\alpha}u_1, e_{\alpha}u_2), ~~~\alpha=1,\cdots,7,
\end{eqnarray*}
where $\{1,e_1,e_2,\cdots, e_7\}$ is the standard orthonormal basis of the octonions (Cayley numbers) $\mathbb{O}$ as before.
\begin{lem}\label{lem5-1}
 $P_0\cdots P_8x=(-u_1, u_2, -v_1, v_2)$.
\end{lem}

The proof is similar to that of Assertion \ref{assert3} in Section \ref{sec-C_A}, and is omitted here.
One can verify without much difficulty that $x\in M_+$ if and only if
\begin{equation}\label{M+87}
  |u|^2=|v|^2=1/2,~~\mathrm{Re}(u_1\bar{v}_1+u_2\bar{v}_2)=0,~~\mathrm{Im}(u_1\bar{v}_1-u_2\bar{v}_2)=0.
\end{equation}
The last two equalities imply that $u_1\bar{v}_1+v_2\bar{u}_2=0$. Thus $|u_1|=|v_2|$ and $|u_2|=|v_1|$, which lead to
\begin{equation}\label{87CE}
  \langle P_0\cdots P_8x, x\rangle=-|u_1|^2+|u_2|^2-|v_1|^2+|v_2|^2=0
\end{equation}
by the preceding lemma.
Define a closed subset $D'=:\{x\in M_+~|~P_1\cdots P_8x=\pm x\}\subset M_+$. Notice that $D'$ has measure zero in $M_+$ since $P_1\cdots P_8x=\pm x$ will lead to
$u_1=v_2=0$ or $u_2=v_1=0$. Analogously as in the $(7, 8)$ case, given $x\in M_+$, and unit $X\in T_xM_+$, we define
a smooth function on the unit tangent bundle $S(TM_+)$:
$$f=:f(x, X)=:\sum_{\alpha,\beta=0,\alpha<\beta}^{8}\langle X, P_{\alpha}P_{\beta}x\rangle^2.$$ Thus $f$ gives rise to functions $g$ and $G$ on $M_+$ by
$$g(x)=:\min_{X\in T_xM_+,~|X|=1}f(x, X),\quad G(x)=\max\limits_{X\in T_xM_+,~|X|=1}f(x, X).$$
Again, $g$ and $G$ are continuous, since the unit tangent bundle is locally trivial.
If $x\in M_+\backslash D'$, we choose $$X_0=\frac{P_1\cdots P_8x-\langle P_1\cdots P_8x, x\rangle x}{|P_1\cdots P_8x-\langle P_1\cdots P_8x, x\rangle x|}.$$
It is easy to see that $X_0\in T_xM_+$, since $\langle X_0, x\rangle=0$, $\langle X_0, P_0x\rangle=0$ by (\ref{87CE}) and $\langle X_0, P_{\beta}x\rangle=0$, $\beta=1,\cdots, 8$.
Furthermore, it follows from (\ref{M+87}) and Lemma \ref{lem5-1} that
$$|P_1\cdots P_8x-\langle P_1\cdots P_8x, x\rangle x|^2=1-\langle P_1\cdots P_8x, x\rangle^2=16|u_1|^2|v_1|^2,$$
and
\begin{eqnarray*}
\sum_{\alpha,\beta=0,\alpha<\beta}^{8}\langle P_1\cdots P_8x, P_{\alpha}P_{\beta}x\rangle^2 &=& \sum_{\alpha=1}^{8}\langle P_1\cdots P_8x, P_0P_{\alpha}x\rangle^2 \\
   &=& \langle P_0\cdots P_8x, P_1x\rangle^2+\sum_{\alpha=2}^{8}\langle P_0\cdots P_8x, P_{\alpha}x\rangle^2 \\
   &=& 4|\mathrm{Re}(u_1\bar{v}_1-u_2\bar{v}_2)|^2+4|\mathrm{Im}(u_1\bar{v}_1+u_2\bar{v}_2)|^2\\
      &=& 4|u_1\bar{v}_1-u_2\bar{v}_2|^2+4|u_1\bar{v}_1+u_2\bar{v}_2|^2\\
   &=&16|u_1|^2|v_1|^2,
\end{eqnarray*}
thus $f(x, X_0)=1.$ This means that $g|_{M_+\backslash D'}\leq 1$ and thus $g(x)\leq 1$ on $M_+$ by continuity.

On the other hand, it is obvious that $G(x)\geq 1$ since $P_{\alpha}P_{\beta}x$ are unit tangent vectors. In fact, we have $G(x)>1$ on $M_+$. Otherwise, suppose $f(x, X)\leq 1$ for any $X\in S(T_xM_+)$ at some $x\in M_+$.
Then for any two distinct pairs $(\alpha, \beta)$ and $(\gamma, \delta)$,
$P_{\alpha}P_{\beta}x$ and $P_{\gamma}P_{\delta}x$ must be perpendicular to each other, which is impossible since
$\sharp\{(\alpha, \beta)~|~0\leq\alpha<\beta\leq 8\}=36>\dim M_+=22.$

If $f$ is constant at some point $x\in M_+$, then $1<G(x)=g(x)\leq 1$, an obvious contradiction.
Therefore, $C_E=\emptyset$ in $M_+$.
\vspace{2mm}

\noindent
\textbf{(3) the $(10, 21)$ case}. Let $\dim M_+=52=:p$ and $\sharp\{(\alpha, \beta)~|~0\leq\alpha<\beta\leq10\}=55=:q$ for the sake of convenience.
Given $x\in M_+$, choosing an orthonormal basis of $T_xM_+$, we can identify $T_xM_+$ with the Euclidean space $\mathbb{R}^p$.
In this way, the unit vectors $P_0P_1x,P_0P_2x,\cdots,P_0P_{10}x,P_1P_2x,\cdots,P_9P_{10}x$
become unit vectors $b_1,\cdots,b_q$ in $\mathbb{R}^p$.
Write $b_i=(b_{i1}, \cdots, b_{ip})$ with $i=1,\cdots, q$, thus we obtain a matrix
$B=(b_{ij})_{q\times p}$. Define a function $$f(X)=:\sum_{i=1}^{q}\langle X, b_i\rangle^2~ for ~X\in S^{p-1}(1).$$
Clearly, $x\in C_E$ if and only if the function $f$ is a constant. On the other hand, $f$ is a quadratic form with respect to the symmetric matrix $\sum_{i=1}^qb_i^tb_i=B^tB$, thereby
$$f\equiv Const=:c ~~\Leftrightarrow~~
B^tB=c I_p.$$
As $b_i$'s are unit vectors, by taking trace we see $c=\frac{q}{p}$ and thus $f$ is a constant if and only if
$$ \langle (b_{1i},\cdots,b_{qi})^t, (b_{1j},\cdots,b_{qj})^t\rangle=\delta _{ij}\frac{q}{p}, ~for ~i,j=1,\cdots,p.$$
It is easily seen that the matrix $\sqrt{p/q}B$ can be extended to an orthogonal matrix, say $C=(c_{ij})_{q\times q}$. Since
$\langle b_{\alpha},b_{\beta}\rangle=\langle P_0P_{\alpha}x, P_0P_{\beta}x\rangle=0$ for any $\alpha\neq\beta$, $\alpha,\beta=1,\cdots, 10$,
it follows from $CC^t=C^tC=I_{q}$ that
the vectors $\beta_1,\cdots, \beta_{10}\in\mathbb{R}^{q-p}$ defined by $\beta_i=(c_{i(q-2)}, c_{i(q-1)}, c_{iq})$ $(i=1,\cdots, 10)$
satisfy $|\beta_1|^2=\cdots=|\beta_{10}|^2=1-p/q$ and $\langle \beta_i, \beta_j\rangle=0$ for any $i\neq j, i, j=1,\cdots, 10$.
However, this is impossible. Therefore, $C_E=\emptyset$ in this case.
\end{proof}

\begin{rem}
In \cite{QTY13}, they gave a sufficient condition on $M_+$ for $x\not\in C_E$. More precisely, $x$ does not belong to $C_E$ if $\dim M_+>\binom{m+1}{2}$.
Without difficulty, the arguments in the $(10, 21)$ case above can weaken the sufficient condition to  $\dim M_+>\binom{m+1}{2}-m$.
\end{rem}

\section{The focal submanifolds with $g=4$ and multiplicities $(8,7)$}\label{sec-prop1.2}
The purpose of this section is to give a proof of Proposition \ref{prop} concerning the isoparametric family of OT-FKM type with multiplicities $(8,7)$.
Firstly, we define a function $h$ on $M_-^I$ by $h(x)=\langle Px, x\rangle$,
where $P=:P_0\cdots P_8$ as defined in the introduction. We will show
\begin{lem}\label{lem6-1}
$h$ is an
isoparametric function on $M_-^I$ satisfying
\begin{equation*}\label{hM-F}
\left\{\begin{array}{ll}
|\nabla^I h|^2=4(1-h^2)\\
~~\Delta^Ih=-32h,
\end{array}\right.
\end{equation*}
where $\nabla^I h$ and $\Delta^Ih$ are the gradient and Laplacian of $h$ on $M_-^I$ with the induced metric from $S^{31}(1)$.
\end{lem}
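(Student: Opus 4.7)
The plan is to derive both quantities by pushing $\tilde h(x)=\langle Px,x\rangle$ from $\mathbb{R}^{32}$ down through $S^{31}$ to $M_-^I$. The crucial algebraic fact is that $P=P_0\cdots P_8$ commutes with every $P_\alpha$: moving $P_\alpha$ through the 9-fold product produces $(-1)^{8-\alpha}=(-1)^\alpha$ from either side, so $PP_\alpha=P_\alpha P$. Consequently $P$ commutes with every element of the Clifford sphere $\Sigma(P_0,\ldots,P_8)$; in particular $PQ_0=Q_0P$ where $Q_0=\sum_\alpha\langle P_\alpha x,x\rangle P_\alpha\in\Sigma$ is the canonical element with $Q_0x=x$ at $x\in M_-^I$. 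Thus $P$ preserves $E_\pm(Q_0)$, and $Px\in E_+(Q_0)$.

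For the gradient, I would compute $\nabla^{\mathbb{R}^{32}}\tilde h=2Px$, then $\nabla^{S^{31}}h=2Px-2h(x)x$. The projection to $T_xM_-^I$ is automatic: by Section 4.4, $T_x^\perp M_-^I|_{S^{31}}\subset E_-(Q_0)$, while the preceding paragraph shows $2Px-2hx\in E_+(Q_0)$. Hence $\nabla^I h=2(Px-hx)$ and $|\nabla^I h|^2=4(|Px|^2-2h^2+h^2)=4(1-h^2)$.

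For the Laplacian, note $\Delta^{\mathbb{R}^{32}}\tilde h=2\,\mathrm{tr}(P)=0$. Combined with the homogeneity $\tilde h(rx)=r^2h(x)$ and the polar decomposition of $\Delta^{\mathbb{R}^{32}}$, this yields $\Delta^{S^{31}}h=-64h$. Since $M_-^I$ is minimal in $S^{31}$,
\[
\Delta^I h=\Delta^{S^{31}}h-\sum_{j=1}^8 \mathrm{Hess}^{S^{31}}(h)(\eta_j,\eta_j)
\]
for any orthonormal basis $\{\eta_j\}$ of $T_x^\perp M_-^I|_{S^{31}}$. A standard Gauss-type identity, together with $\mathrm{II}^{S^{31}/\mathbb{R}^{32}}(\eta,\eta)=-|\eta|^2x$, gives $\mathrm{Hess}^{S^{31}}(h)(\eta,\eta)=2\langle P\eta,\eta\rangle-2h$ on unit vectors. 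So the entire task reduces to computing $\mathrm{tr}\bigl(P|_{T_x^\perp M_-^I|_{S^{31}}}\bigr)$.

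This trace evaluation is the main obstacle. I would split $E_-(Q_0)$ orthogonally as $\mathrm{Span}\{Q_1x,\ldots,Q_8x\}\oplus T_x^\perp M_-^I|_{S^{31}}$; the $Q_\beta x$'s are orthonormal because $Q_iQ_j$ is skew-symmetric for $i\neq j$. Using $PQ_\beta=Q_\beta P$ and $Q_\beta^2=\mathrm{Id}$,
\[
\mathrm{tr}\bigl(P|_{\mathrm{Span}\{Q_\beta x\}}\bigr)=\sum_{\beta=1}^8\langle PQ_\beta x,Q_\beta x\rangle=\sum_{\beta=1}^8\langle Px,x\rangle=8h.
\]
On the other hand, $P$ and $Q_0$ are commuting symmetric involutions with $\mathrm{tr}(P)=\mathrm{tr}(Q_0)=\mathrm{tr}(PQ_0)=0$; the last because each summand of $PQ_0=\sum_\alpha p_\alpha PP_\alpha$ is, up to sign, $p_\alpha$ times the 8-fold product $P_0\cdots\widehat{P_\alpha}\cdots P_8$, which anti-commutes with any $P_\beta$ $(\beta\neq\alpha)$ and hence has vanishing trace. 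Therefore the four joint eigenspaces $E_{\pm}(P)\cap E_{\pm}(Q_0)$ each have dimension $8$, giving $\mathrm{tr}(P|_{E_-(Q_0)})=0$. Combining, $\mathrm{tr}\bigl(P|_{T_x^\perp M_-^I|_{S^{31}}}\bigr)=-8h$, and substituting back yields $\sum_j \mathrm{Hess}^{S^{31}}(h)(\eta_j,\eta_j)=-32h$, hence $\Delta^I h=-64h-(-32h)=-32h$, as required.
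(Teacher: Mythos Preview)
Your proof is correct. The gradient computation is essentially identical to the paper's: both use $PQ_0=Q_0P$ to see that $Px-hx\in E_+(Q_0)$ is already tangent to $M_-^I$, since the normal space lies in $E_-(Q_0)$.

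For the Laplacian the two arguments are organized differently but rest on the same core computation. The paper works intrinsically on $M_-^I$: it writes $\mathrm{Hess}^I_h(X,Y)=2\langle PX-hX,Y\rangle$, takes the trace over the explicit orthonormal basis $\{Q_jx,\,Q_iN_\alpha,\,Q_iQ_jx\}$ of $T_xM_-^I$, and uses the full $\mathbb{R}^{32}$ basis together with $\mathrm{tr}(P)=0$ to extract $\sum_\alpha\langle PN_\alpha,N_\alpha\rangle=-8h$. You instead pass through the ambient sphere, using homogeneity to get $\Delta^{S^{31}}h=-64h$ and minimality to reduce to the normal Hessian trace, which again comes down to $\mathrm{tr}(P|_{T_x^\perp M_-^I})=-8h$. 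Your derivation of this via the joint eigenspace decomposition of the commuting involutions $P,Q_0$ (using $\mathrm{tr}(P)=\mathrm{tr}(Q_0)=\mathrm{tr}(PQ_0)=0$) is a pleasant alternative to the paper's direct basis computation; note that one could shortcut it by observing that $Q_1$ anticommutes with $Q_0$ and commutes with $P$, hence conjugation by $Q_1$ swaps $E_\pm(Q_0)$ while preserving $P$, giving $\mathrm{tr}(P|_{E_-(Q_0)})=\mathrm{tr}(P|_{E_+(Q_0)})=0$ immediately.
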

\begin{proof}
We still make use of the notations in Subsection \ref{subsec-87}.
At first, it is easy to see that
$$\frac{1}{2}\nabla^I h=(Px)^{T}=Px-\langle Px, x\rangle x-\sum_{\alpha=1}^8\langle Px, N_{\alpha}\rangle N_{\alpha},$$
where $N_1,\cdots,N_8$ is an orthonormal basis of $T_x^{\bot}M_-^I$ in $T_xS^{31}(1)$.

Given $x \in M_-$ and $Q_0\in \Sigma(P_0,\cdots,P_8)$ with $Q_0x=x$, we can extend $Q_0$ to such a symmetric Clifford system $\{Q_0, Q_1, \cdots , Q_8\}$
with $Q_i\in\Sigma_{Q_0}$ $(i\geq 1)$ that $\Sigma(Q_0, Q_1, \cdots, Q_8)=\Sigma(P_0, P_1, \cdots, P_8)$ and $P=P_0\cdots P_8=Q_0\cdots Q_8$.
Thus we obtain
$$\langle Px, N_{\alpha}\rangle N_{\alpha}=\langle Q_1\cdots Q_8x, N_{\alpha}\rangle N_{\alpha}.$$
Clearly, the interpretation (\ref{E+-}) tells that $x\in E_+(Q_0)$ and $N_{\alpha}\in E_-(Q_0)$. Thus combining with the property that
$Q_i$ ($i\geq 1$) maps $E_{\pm}(Q_0)$ to $E_{\mp}(Q_0)$, we derive that $Q_1\cdots Q_8x\in E_+(Q_0)$, which
leads to $\langle Q_1\cdots Q_8x, N_{\alpha}\rangle=0$. Therefore, the arguments above imply
$$\frac{1}{2}\nabla^I h=Px-\langle Px, x\rangle x.$$
In particular, $$|\nabla^I h|^2=4(1-h^2).$$

Next, we turn to calculate the laplacian, i.e., the trace of Hessian $Hess_h^I$:
\begin{eqnarray}\label{hessian}
Hess_h^I(X, Y) &=& \langle \nabla^I_X\nabla^Ih, Y\rangle=2 \langle D_X(Px-hx), Y\rangle\\
&=&2 \langle PX-hX, Y\rangle,\nonumber
\end{eqnarray}
for $X, Y\in T_xM_-^I$, where $\nabla^I$ and $D$ are the Levi-Civita connections on $M_-^I$ and $\mathbb{R}^{32}$, respectively.
Notice that $\mathrm{Trace} P=\mathrm{Trace}(P_0\cdots P_8)=0$ in the indefinite case.
By virtue of the orthonormal basis (\ref{o.n.b of TxM2}) given in \cite{TY15},
for any given $i=1,\cdots,8$, we obtain
\begin{eqnarray*}
0&=&\mathrm{Trace} P\\
&=& \sum_{\alpha=1}^8\langle PN_{\alpha}, N_{\alpha}\rangle+\sum_{j=1}^8\langle PQ_jx, Q_jx\rangle+\sum_{\alpha=1}^8\langle PQ_iN_{\alpha}, Q_iN_{\alpha}\rangle+ \sum_{j=1}^8\langle PQ_iQ_jx, Q_iQ_jx\rangle\\
  &=& 2 \sum_{\alpha=1}^8\langle PN_{\alpha}, N_{\alpha}\rangle+16h,
\end{eqnarray*}
which implies $8h=-\sum_{\alpha=1}^8\langle PN_{\alpha}, N_{\alpha}\rangle$.
Substituting this into the expression of $\Delta^Ih$, we get $$\Delta^Ih=-32h.$$
Thus $h$ is an isoparametric function on $M_-^I$.
Furthermore, the focal variety of $h$ is $h^{-1}(1)\cup h^{-1}(-1)=(E_{+}(P)\cap M_-^I)\cup(E_{-}(P)\cap M_-^I)=S^{15}(1)\sqcup S^{15}(1)=C_A^I$ by Proposition \ref{C_A F}.
\end{proof}

For the definite case, as in the introduction, we will use the symmetric Clifford system $\{\tilde{P}_{0}=:P_{0}P, \cdots, \tilde{P}_{8}=:P_{8}P\}$.  Define the function $\tilde{h}$
on $M_-^D$ by $\tilde{h}(x)=\langle Px, x\rangle$ with $P=:P_0\cdots P_8$.
It should be reminded that $h$ and $\tilde{h}$ have the same expression but different domains.
Then the same arguments as above lead us to
\begin{lem}\label{lem6-2}
$\tilde{h}$ is an
isoparametric function on $M_-^D$ satisfying
\begin{equation*}\label{hM-F}
\left\{\begin{array}{ll}
|\nabla^D \tilde{h}|^2=4(1-\tilde{h}^2)\\
~~\Delta^D\tilde{h}=-32\tilde{h},
\end{array}\right.
\end{equation*}
where $\nabla^D \tilde{h}$ and $\Delta^D\tilde{h}$ are the gradient and Laplacian of $\tilde{h}$ on $M_-^D$ with the induced metric from $S^{31}(1)$.
The focal variety of $\tilde{h}$ is $\tilde{h}^{-1}(1)\cup \tilde{h}^{-1}(-1)=(E_{+}(P)\cap M_-^D)\cup(E_{-}(P)\cap M_-^D)=S^{15}(1)\sqcup S^{15}(1)$ by Proposition \ref{C_A F}.
\hfill$\Box$
\end{lem}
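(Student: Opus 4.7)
\textbf{Plan for Lemma~\ref{lem6-2}.} My approach is to imitate the proof of Lemma~\ref{lem6-1} step by step, leveraging the key algebraic observation that $P=P_0\cdots P_8$ commutes with each $P_\alpha$: a parity count shows that moving $P_\alpha$ past the remaining $8$ anticommuting factors costs $(-1)^8=1$. This makes $\{\tilde P_0,\ldots,\tilde P_8\}$ a Clifford system with $\tilde P_0\cdots\tilde P_8 = I$, and realizes every $\tilde Q_0\in\Sigma(\tilde P_0,\ldots,\tilde P_8)$ as $\tilde Q_0=Q_0P$ for a unique $Q_0\in\Sigma(P_0,\ldots,P_8)$. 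The identity fuelling the whole computation is $\tilde Q_j P\tilde Q_j = (Q_jP)P(Q_jP) = Q_j^2 P = P$ for every $j\ge 0$.

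For the gradient, I would start from the ambient expression $\nabla\tilde h = 2Px$ in $\mathbb{R}^{32}$ and project to $T_xM_-^D$. At $x\in M_-^D$, pick $\tilde Q_0$ with $\tilde Q_0 x = x$; then $Q_0 P x = x$ gives $Px = Q_0 x$, and a one-line check yields $\tilde Q_0(Px) = Q_0 P\cdot Px = Q_0 x = Px$, placing $Px\in E_+(\tilde Q_0)$. Since (\ref{normalker}) applied to the definite Clifford system puts $T_x^\perp M_-^D\subset E_-(\tilde Q_0)$, the normal components of $2Px$ vanish identically, so $\nabla^D\tilde h = 2(Px-\tilde h\,x)$, and the first identity $|\nabla^D\tilde h|^2 = 4(1-\tilde h^2)$ follows from $|Px|=|x|=1$.

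For the Laplacian, the Hessian computation (\ref{hessian}) repeats verbatim to give $\mathrm{Hess}^D_{\tilde h}(X,Y) = 2\langle PX - \tilde h X, Y\rangle$ on $T_xM_-^D$, hence $\Delta^D\tilde h = 2\,\mathrm{Trace}_{T_xM_-^D}(P) - 2\tilde h\dim M_-^D = 2\,\mathrm{Trace}_{T_xM_-^D}(P) - 46\tilde h$. Using the orthonormal tangent basis of the form (\ref{o.n.b of TxM2}) with $\tilde Q_j,\tilde N_\alpha$ in place of $Q_j, N_\alpha$, the identity $\tilde Q_j P\tilde Q_j = P$ collapses each diagonal entry: $\langle P\tilde Q_j x,\tilde Q_j x\rangle = \tilde h$, $\langle P\tilde Q_i\tilde Q_j x,\tilde Q_i\tilde Q_j x\rangle = \tilde h$, and $\langle P\tilde Q_i\tilde N_\alpha,\tilde Q_i\tilde N_\alpha\rangle = \langle P\tilde N_\alpha,\tilde N_\alpha\rangle$. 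Combining with the ambient identity $0 = \mathrm{Trace}(P) = \tilde h + \sum_\alpha\langle P\tilde N_\alpha,\tilde N_\alpha\rangle + \mathrm{Trace}_{T_xM_-^D}(P)$ (whose vanishing uses $P\neq\pm I$ in the parent indefinite system) forces $\sum_\alpha\langle P\tilde N_\alpha,\tilde N_\alpha\rangle = -8\tilde h$, then $\mathrm{Trace}_{T_xM_-^D}(P) = 7\tilde h$, and finally $\Delta^D\tilde h = -32\tilde h$.

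The focal variety description is then immediate: $|\nabla^D\tilde h|=0$ forces $\tilde h = \pm1$, and since $|Px|=|x|=1$, Cauchy--Schwarz gives $Px = \pm x$, i.e.\ $x\in E_\pm(P)$; Proposition~\ref{C_A F} identifies $E_\pm(P)\cap M_-^D$ with the two disjoint copies of $S^{15}(1)$. The only genuine obstacle of the plan is organising the parity bookkeeping behind $P_\alpha P = PP_\alpha$ and its consequence $\tilde Q_j P\tilde Q_j = P$; once these are granted, every step of Lemma~\ref{lem6-1} transfers without change and the trace reduction that delivers the Laplacian becomes entirely routine.
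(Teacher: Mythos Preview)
Your plan is correct and follows essentially the same route as the paper, which simply states that ``the same arguments as above'' (i.e., those in Lemma~\ref{lem6-1}) yield the result. You have spelled out the details the paper omits: the commutation $P_\alpha P=PP_\alpha$ and its consequence $\tilde Q_jP\tilde Q_j=P$ are exactly what make the gradient and trace computations of Lemma~\ref{lem6-1} transfer verbatim to $M_-^D$, and your identification $\tilde Q_0=Q_0P$ with $Px=Q_0x\in E_+(\tilde Q_0)$ is the definite-case analogue of the paper's observation $Px=Q_1\cdots Q_8x\in E_+(Q_0)$.
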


\begin{rem}
According to \cite{Wan87} or \cite{GT13}, the existence of the isoparametric function $h$ on $M_-^I$ with focal variety $S^{15}(1)\sqcup S^{15}(1)$ gives rise to a decomposition
of the manifold $M_-^I$. More precisely, $M_-^I$ is a union of two $D^8$-bundles over $S^{15}$. There exists the same decomposition on $M_-^D$. However, by Corollary 1 in \cite{Wan88}, $M_-^I$ is diffeomorphic to $S^8\times S^{15}$. By \cite{FKM81} or \cite{Wan88}, $M_-^D$ is an $S^8$ bundle over $S^{15}$. But according to Theorem 1 in \cite{Wan88}, $M_-^I$ and $M_-^D$ are not homeomorphic.
\end{rem}

As the last part of this paper, we want to show that both of $M_+^I\hookrightarrow M_-^D$ and $M_+^D\hookrightarrow M_-^I$ are totally isoparametric and austere hypersurfaces.
For the convenience of notations, we will only prove the case $M_+^I\hookrightarrow M_-^D$, as the proof of the other will use the same
method, just replacing $P_{\alpha}$ and $\tilde{P}_{\alpha}=:P_{\alpha}P$ with $\tilde{P}_{\alpha}$ and $P_{\alpha}$, respectively.

The first step is to establish
\begin{lem}\label{lem6-3}
 $M_+^I=\tilde{h}^{-1}(0)\hookrightarrow M_-^D$ is an isoparametric hypersurface.
\end{lem}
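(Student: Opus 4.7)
My strategy is to reduce the lemma to the set-theoretic identity $\tilde h^{-1}(0) = M_+^I$ inside $S^{31}$. Once this identification is in hand the conclusion is automatic: by Lemma~\ref{lem6-2}, $\tilde h$ is isoparametric on $M_-^D$ with $|\nabla^D \tilde h|^2 = 4(1-\tilde h^2)$ equal to $4$ on $\{\tilde h = 0\}$, so $0$ is a regular value, and a regular level set of an isoparametric function is automatically an isoparametric hypersurface.

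For the forward inclusion $M_+^I \subseteq \tilde h^{-1}(0)$ I will work in the explicit octonion model of the Clifford system $\{P_0,\ldots,P_8\}$ on $\mathbb{R}^{32} \cong \mathbb{O}^4$ introduced in case~(2) of Section~\ref{sec-C_E}. For $x = (u_1,u_2,v_1,v_2) \in M_+^I$, the equations~(\ref{M+87}) yield $|u_1|=|v_2|$ and $|u_2|=|v_1|$, so combining with Lemma~\ref{lem5-1} ($Px=(-u_1,u_2,-v_1,v_2)$) gives $\tilde h(x) = -|u_1|^2+|u_2|^2-|v_1|^2+|v_2|^2 = 0$ directly. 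To verify $x \in M_-^D$, I rewrite $\sum_\alpha \langle P_\alpha P x, x\rangle^2 = \sum_\alpha \langle P_\alpha x, Px\rangle^2$ via $P^T=P$ and $[P,P_\alpha]=0$; since $\{P_\alpha x\}_{\alpha=0}^8$ is automatically orthonormal on $M_+^I$ (by antisymmetry of $P_\alpha P_\beta$ for $\alpha\neq\beta$), this sum is the squared projection of $Px$ onto $\operatorname{span}\{P_0 x,\ldots,P_8 x\}$, and expanding $Px$ against the octonion formulas for the $P_\alpha x$ exhibits $Px$ as a unit vector in that span.

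The reverse inclusion will follow by a dimension and connectedness argument. Both $M_+^I$ and $\tilde h^{-1}(0)$ are closed smooth submanifolds of $S^{31}$ of dimension $22$, so $M_+^I$ is open and closed in $\tilde h^{-1}(0)$. The focal varieties $\tilde h^{-1}(\pm 1) = E_\pm(P)\cap S^{31} \cong S^{15}$ (by Proposition~\ref{C_A F} and Lemma~\ref{lem6-2}) are each connected, whence the intermediate regular level $\tilde h^{-1}(0)$ is connected by standard isoparametric foliation theory, and the inclusion is forced to be an equality.

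The main obstacle is the explicit octonion expansion showing $Px \in \operatorname{span}\{P_0 x,\ldots,P_8 x\}$ for $x \in M_+^I$; a conceptually cleaner route—which I would first attempt—is to establish the degree-four polynomial identity $\sum_\alpha \langle P_\alpha x, x\rangle^2 + \sum_\alpha \langle P_\alpha x, Px\rangle^2 - \langle Px, x\rangle^2 = |x|^4$ on all of $\mathbb{R}^{32}$, verifiable from the Clifford relations $[P,P_\alpha]=0$, $P^2=P_\alpha^2=I$ together with the trace identities for products of anticommuting involutions, from which both $\tilde h|_{M_+^I}=0$ and $M_+^I \subseteq M_-^D$ would follow uniformly.
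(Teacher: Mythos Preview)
Your proposal is correct. The polynomial identity you propose,
\[
\sum_{\alpha=0}^{8}\langle P_\alpha x, x\rangle^2 + \sum_{\alpha=0}^{8}\langle P_\alpha x, Px\rangle^2 - \langle Px, x\rangle^2 = |x|^4,
\]
does hold on all of $\mathbb{R}^{32}$, and combined with Bessel's inequality $\sum_\alpha\langle P_\alpha x, Px\rangle^2 \le |Px|^2 = 1$ (valid since $\{P_\alpha x\}_{\alpha=0}^8$ is orthonormal) it forces both $\tilde h(x)=0$ and $x\in M_-^D$ for $x\in M_+^I$ simultaneously, which is an elegant packaging. However, verifying the identity is not a purely formal consequence of the Clifford relations: it rests on the numerical coincidence that $\{P_1 y,\ldots,P_8 y\}$ is an orthonormal \emph{basis} of the $8$-dimensional piece $E_{-\varepsilon}(P_0)\cap E_\delta(P)$ whenever $y$ is a unit vector in $E_\varepsilon(P_0)\cap E_\delta(P)$. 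This is exactly the eigenspace apparatus the paper deploys directly in its proof of $M_+^I\subset M_-^D$ (the decomposition $x=x_{++}+x_{+-}+x_{-+}+x_{--}$ and the identities $|x_{++}|^2|x_{-+}|^2=\sum_\alpha\langle x_{++},P_\alpha x_{-+}\rangle^2$, etc.), so the two forward arguments are the same computation organized differently. Your octonion-coordinate route via Lemma~\ref{lem5-1} and~(\ref{M+87}) is likewise just a concrete realization of this decomposition.

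Where you genuinely diverge from the paper is the reverse inclusion $\tilde h^{-1}(0)\subseteq M_+^I$. You invoke dimension and connectedness of $\tilde h^{-1}(0)$, which is legitimate (it is a sphere bundle with connected fiber $S^7$ over the connected focal variety $S^{15}$). The paper instead gives a two-line direct check: for $x\in\tilde h^{-1}(0)\subset M_-^D$ with $Q_0Px=x$, one has $\langle Q_i x, x\rangle = \langle Q_i x, Q_0Px\rangle = 0$ for $i\ge 1$ because $Q_iQ_0P$ is skew-symmetric, and $\langle Q_0 x, x\rangle = \langle Q_0 x, Q_0Px\rangle = \tilde h(x) = 0$, whence $x\in M_+^I$. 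This algebraic verification is shorter and avoids any appeal to the global structure theory of isoparametric foliations; your topological route works but is less economical here.
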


\begin{proof}
We only need to show $M_+^I\subset M_-^D$, since then for $x\in M_+^I$, $\tilde{h}(x)=\langle Px, x\rangle=\langle Px, Q_0Px\rangle=\langle x, Q_0x\rangle=0$, i.e., $M_+^I\subset \tilde{h}^{-1}(0)$;
 and conversely for $x\in\tilde{h}^{-1}(0)\subset M_-^D$, $\langle Q_ix,x\rangle=\langle Q_ix, Q_0Px\rangle=0$ for $i=1,\ldots,8$ and $\langle Q_0x,x\rangle=\langle Q_0x, Q_0Px\rangle=\tilde{h}(x)=0$, thus $\tilde{h}^{-1}(0)\subset M_+^I$.

By a similar argument as in Proposition \ref{C_A G}, we decompose $\mathbb{R}^{32}$ as the direct sum of four $8$-dimensional subspaces :
\begin{eqnarray*}
&&\mathbb{R}^{32}= E_+(P_0)\oplus E_-(P_0) \\
  &=& (E_+(P_0)\cap E_+(P))\oplus(E_+(P_0)\cap E_-(P))\oplus(E_-(P_0)\cap E_+(P))\oplus(E_-(P_0)\cap E_-(P)),
\end{eqnarray*}
and write $x\in S^{31}(1)$ as $x=x_++x_-=x_{++}+x_{+-}+x_{-+}+x_{--}$ with corresponding indexes.
Given $x\in M_+^I\subset S^{31}(1)$, the property $\langle P_0x, x\rangle=0$ indicates that $|x_+|^2=|x_-|^2=\frac{1}{2}$,
and $\langle P_{\alpha}x, x\rangle=0$ ($\alpha\geq 1$) indicates that $\langle P_{\alpha}x_+, x_-\rangle=0$ and further
\begin{equation}\label{subset}
\langle P_{\alpha}x_{++}, x_{-+}\rangle+\langle P_{\alpha}x_{+-}, x_{--}\rangle=0,
\end{equation}
since $P_{\alpha}P_0+P_0P_{\alpha}=0$ and $P_{\alpha}P=PP_{\alpha}$ for $\alpha\geq 1$.

Clearly, if $x_{++}=0$, then $x_{+-}=x_+\neq 0$, and
$\mathrm{Span}\{P_1x_{+-},\cdots, P_8x_{+-}\}=E_-(P_0)\cap E_-(P)$ for dimensional reason.
It follows from (\ref{subset}) that $x_{--}=0$. Hence
$$Px=P(x_{+-}+x_{-+})=-x_{+-}+x_{-+}=-P_0(x_{+-}+x_{-+})=-P_0x,$$
which indicates that $-P_0Px=x$, and thus  $x\in M_-^D$.
Analogously, if any one of $x_{+-}$, $x_{-+}$ and $x_{--}$ vanishes, we can also show $x\in M_-^D$ by the same way.

Now suppose none of $x_{++}$, $x_{+-}$, $x_{-+}$ and $x_{--}$ is zero. Then we have
\begin{eqnarray*}
&& \mathrm{Span}\{P_1x_{-+},\cdots, P_8x_{-+}\}=E_+(P_0)\cap E_+(P),\\
&& \mathrm{Span}\{P_1x_{--},\cdots, P_8x_{--}\}=E_+(P_0)\cap E_-(P),\\
&& \mathrm{Span}\{P_1x_{++},\cdots, P_8x_{++}\}=E_-(P_0)\cap E_+(P),\\
&& \mathrm{Span}\{P_1x_{+-},\cdots, P_8x_{+-}\}=E_-(P_0)\cap E_-(P),
\end{eqnarray*}
and thus
\begin{eqnarray*}
x_{++}=\sum_{\alpha=1}^8\langle x_{++}, P_{\alpha}\frac{x_{-+}}{|x_{-+}|}\rangle P_{\alpha}\frac{x_{-+}}{|x_{-+}|},&&
 x_{+-}=\sum_{\alpha=1}^8\langle x_{+-}, P_{\alpha}\frac{x_{--}}{|x_{--}|}\rangle P_{\alpha}\frac{x_{--}}{|x_{--}|},\\
x_{-+}=\sum_{\alpha=1}^8\langle x_{-+}, P_{\alpha}\frac{x_{++}}{|x_{++}|}\rangle P_{\alpha}\frac{x_{++}}{|x_{++}|},&&
x_{--}=\sum_{\alpha=1}^8\langle x_{--}, P_{\alpha}\frac{x_{+-}}{|x_{+-}|}\rangle P_{\alpha}\frac{x_{+-}}{|x_{+-}|},
\end{eqnarray*}
which implies
\begin{equation*}
  |x_{++}|^2|x_{-+}|^2=\sum_{\alpha=1}^8\langle x_{++}, P_{\alpha}x_{-+}\rangle^2,\quad |x_{+-}|^2|x_{--}|^2=\sum_{\alpha=1}^8\langle x_{+-}, P_{\alpha}x_{--}\rangle^2.
\end{equation*}
Substituting these into (\ref{subset}), we obtain
\begin{equation*}
|x_{++}|^2=|x_{--}|^2=:a,\quad |x_{+-}|^2=|x_{-+}|^2=:b,~\mathrm{with}~a+b=\frac{1}{2},
\end{equation*}
which implies directly that $\tilde{h}(x)=\langle Px, x\rangle=|x_{++}|^2-|x_{+-}|^2+|x_{-+}|^2-|x_{--}|^2=0$.
Besides, a further calculation shows $\langle P_0x, Px\rangle=2(a-b)$ and
$$\sum_{\alpha=1}^8\langle P_{\alpha}x, Px\rangle^2=16\sum_{\alpha=1}^8\langle x_{++}, P_{\alpha}x_{-+}\rangle^2=16ab,$$
which lead us to $\sum_{\alpha=0}^8\langle P_{\alpha}x, Px\rangle^2=4(a+b)^2=1$, i.e., $x\in M_-^D$.


In conclusion, $M_+^I=\tilde{h}^{-1}(0)\hookrightarrow M_-^D$ is an isoparametric hypersurface in $M_-^D$ by Lemma \ref{lem6-2}.
\end{proof}

\begin{rem}
Using the same method, we can also prove that $M_+^D=h^{-1}(0)\hookrightarrow M_-^I$ is an isoparametric hypersurface in $M_-^I$.
\end{rem}

Our final task is to prove
\begin{lem}\label{lem6-4}
$M_+^I\hookrightarrow M_-^D$ is a totally isoparametric, austere hypersurface in $M_-^D$.
\end{lem}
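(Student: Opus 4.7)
My plan is to compute the shape operator of each parallel hypersurface $\tilde h^{-1}(t)$ in $M_-^D$ via the Hessian of $\tilde h$, and then identify the austere structure at $t=0$ with the well-understood shape operator of the focal submanifold $M_+^I$ in $S^{31}$.

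First, I would derive the analogue of Lemma \ref{lem6-1} on $M_-^D$. For the definite Clifford system $\{\tilde P_\alpha = P_\alpha P\}$ with $\tilde Q_0 = Q_0 P$ satisfying $\tilde Q_0 x = x$, the argument of Lemma \ref{lem6-1} carries over: $Px = Q_0 x \in E_+(Q_0)$ is orthogonal to the normal space $T_x^\perp M_-^D \subset E_-(Q_0)$, so $\nabla^D \tilde h|_x = 2(Px - \tilde h(x) x)$ is already tangent to $M_-^D$. Differentiating gives
\begin{equation*}
\mathrm{Hess}^D_{\tilde h}(X,Y) = 2\langle PX,Y\rangle - 2\tilde h(x)\langle X,Y\rangle, \qquad X,Y\in T_x M_-^D,
\end{equation*}
and with unit normal $\nu_t = (Px-tx)/\sqrt{1-t^2}$ to $\tilde h^{-1}(t)$, the shape operator in $M_-^D$ satisfies
\begin{equation*}
\langle A_{\nu_t} X, Y\rangle = -\frac{1}{\sqrt{1-t^2}}\bigl(\langle PX,Y\rangle - t\langle X,Y\rangle\bigr),\qquad X,Y\in T_x\tilde h^{-1}(t).
\end{equation*}

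Next, at $t=0$ I would prove the austere condition via a direct comparison with $S^{31}$. The unit normal $\nu_0 = Px$ lies in $T_x M_-^D$ and is simultaneously a unit vector in the $9$-dimensional normal space $T_x^\perp M_+^I$ in $T_x S^{31}$. Decomposing the ambient second fundamental form $\mathrm{II}^{S^{31}}_{M_+^I} = \mathrm{II}^{M_-^D}_{M_+^I} + \mathrm{II}^{S^{31}}_{M_-^D}$ and noting that the second summand takes values orthogonal to $TM_-^D$ (hence to $\nu_0$), pairing with $\nu_0$ gives $A_{\nu_0}^{M_-^D} = A_{\nu_0}^{S^{31}}$ on $T_x M_+^I$. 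The right-hand side is the shape operator of the focal submanifold $M_+^I \subset S^{31}$ with respect to a unit normal, which by the standard theory (see e.g.\ the discussion around (\ref{sff})) has eigenvalues $+1, -1, 0$ with multiplicities $m_2, m_2, m_1 = 7, 7, 8$. Hence $M_+^I \hookrightarrow M_-^D$ has principal curvatures $\pm 1, 0$ with multiplicities $7, 7, 8$, confirming austerity: the $\pm 1$-eigenspaces have equal multiplicity and the $0$-eigenspace pairs with itself.

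For the totally isoparametric claim at $t \neq 0$, $\tilde h^{-1}(t)$ is no longer a focal submanifold, so the direct $S^{31}$-comparison fails. I would instead exploit the orthogonal decomposition $\mathbb{R}^{32} = E_+(P) \oplus E_-(P) \cong \mathbb{R}^{16} \oplus \mathbb{R}^{16}$: writing $x = x_+ + x_-$ with $|x_+|^2 = (1+t)/2$ and $|x_-|^2 = (1-t)/2$, the level set $\tilde h^{-1}(t)$ sits inside the intersection of $M_-^D$ with the standard $g=2$ isoparametric hypersurface $\sqrt{(1+t)/2}\,S^{15} \times \sqrt{(1-t)/2}\,S^{15}$ of $S^{31}$ associated with the involution $P$. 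Combined with the Hessian formula, this should yield principal curvatures $\sqrt{(1+t)/(1-t)}, -\sqrt{(1-t)/(1+t)}, 0$ with constant multiplicities $7, 7, 8$. The main obstacle is precisely this last calculation: one must track how the eigenspaces of the compressed bilinear form $(X,Y)\mapsto\langle PX, Y\rangle$ on $T_x \tilde h^{-1}(t)$ evolve with $t$ (for instance via a Jacobi-type ODE along the gradient flow of $\tilde h$), and verify that the eigenvalue multiplicities $7, 7, 8$ remain constant on each level — whereas at $t=0$ this is delivered for free by the focal-submanifold structure.
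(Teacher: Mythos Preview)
Your Hessian formula and the austere argument at $t=0$ are correct and match the paper's endgame: the shape operator of $M_+^I$ in $M_-^D$ along $\nu_0=Px$ agrees with that in $S^{31}$, hence has eigenvalues $\pm1,0$ with multiplicities $7,7,8$. The gap is exactly where you flag it: for $t\neq 0$ you only conjecture the principal curvatures and propose a Jacobi-ODE approach without carrying it out. That is the heart of the ``totally isoparametric'' assertion, so as written the proof is incomplete.

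The paper resolves this not by an ODE along the flow but by a direct algebraic computation at each regular point. Given $x\in M_-^D$ with $Q_0Px=x$, one uses the explicit orthonormal basis of $T_xM_-^D$ coming from (\ref{o.n.b of TxM2}) (adapted to the definite system), extends it by $T_x^\perp M_-^D$ to all of $\mathbb{R}^{32}$, and writes $P$ as a $4\times4$ block matrix in this frame. The identity $P^2=I_{32}$ then forces the off-diagonal block $U=(\langle Q_ix,N_\alpha\rangle)$ to satisfy $UU^t=(1-\tilde h^2)I_8$ and the normal block to be $-\tilde h I_8$; this pins down $\mathcal P_\top:=P|_{T_xM_-^D}$ completely enough to compute $\det(\lambda I_{23}-\mathcal P_\top)=(\lambda-\tilde h)^8(\lambda+\tilde h)(\lambda-1)^7(\lambda+1)^7$. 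Thus the eigenvalues of $\langle P\cdot,\cdot\rangle$ on $T_xM_-^D$ are $\tilde h,-\tilde h,1,-1$ with multiplicities $8,1,7,7$, constant on each level set, and your guessed principal curvatures $0,\sqrt{(1+t)/(1-t)},-\sqrt{(1-t)/(1+t)}$ with multiplicities $8,7,7$ follow immediately from your shape-operator formula after discarding the $-\tilde h$-eigenvector (which is the gradient direction). The key idea you are missing is to exploit $P^2=I$ in the adapted Clifford frame rather than to track eigenspaces dynamically.
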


\begin{proof}
We have proved that $M_+^I\hookrightarrow M_-^D$ is an isoparametric hypersurface.
According to \cite{GTY15}, to prove that $M_+^I\hookrightarrow M_-^D$ is totally isoparametric, it suffices to prove that
the eigenvalues of $Hess_{\tilde{h}}$ are constant on any regular level set of $\tilde{h}$.

Given $x\in M_-^D$ with $Q_0Px=x$, as in (\ref{E+-}), we would like to represent $E_{\pm}(Q_0P)$ as
\begin{eqnarray*}
&&E_-(Q_0P) = \mathrm{Span}\{N_1,\cdots, N_8,~Q_1Px,\cdots,Q_8Px\} \\
&&E_+(Q_0P) = \mathrm{Span}\{Q_1PN_1,\cdots, Q_1PN_8,~x, Q_1Q_2x,\cdots,Q_1Q_8x\}.
\end{eqnarray*}
In fact, as we mentioned in Subsection \ref{subsec-87}, $\{N_1,\cdots, N_8\}$ is an orthonormal basis of $T_x^{\bot}M_-^D$, and
$\{Q_1Px,\cdots,Q_8Px, Q_1PN_1,$ $\cdots, Q_1PN_8, Q_1Q_2x,\cdots,Q_1Q_8x\}$ is an orthonormal basis of $T_xM_-^D$.

Under an appropriate arrangement of the orthonormal basis of $\mathbb{R}^{32}=T_xM_-^D\oplus T_x^{\bot}M_-^D\oplus \mathrm{Span}\{x\}$, we can split the matrix $P$ into block form (up to an adjoint transformation by an orthogonal matrix) as
\begin{eqnarray*}
P &=& \left(
  \begin{array}{cccc}
    \langle PQ_iPx, Q_jPx\rangle & \langle PQ_iPx, Q_1PN_{\alpha}\rangle &  \langle PQ_iPx, Q_1Q_jx\rangle & \langle PQ_iPx, N_{\alpha}\rangle\\
    \langle PQ_1PN_{\beta}, Q_jPx\rangle & \langle PQ_1PN_{\beta}, Q_1PN_{\alpha}\rangle  &  \langle PQ_1PN_{\beta},Q_1Q_jx\rangle &\langle PQ_1PN_{\beta}, N_{\alpha}\rangle  \\
    \langle PQ_1Q_ix, Q_jPx\rangle  &  \langle PQ_1Q_ix, Q_1PN_{\alpha}\rangle &  \langle PQ_1Q_ix, Q_1Q_jx\rangle  &  \langle PQ_1Q_ix, N_{\alpha}\rangle\\
    \langle PN_{\beta}, Q_jPx\rangle  &  \langle PN_{\beta}, Q_1PN_{\alpha}\rangle  &   \langle PN_{\beta}, Q_1Q_jx\rangle  &  \langle PN_{\beta}, N_{\alpha}\rangle
   \end{array}
\right) \\
  &=&  \left(
  \begin{array}{cccc}
  \tilde{h}I_8  &  0   &   0   &   U \\
  0     &  \mathcal{P}_{\bot}  &  U^t   &  0\\
  0     &  U   &   \tilde{h}I_8   &  0  \\
  U^t   &  0   &  0   &  \mathcal{P}_{\bot}
  \end{array}\right),
\end{eqnarray*}
where $i,j,\alpha,\beta=1,\cdots, 8$ and $U=:(\langle Q_ix, N_{\alpha}\rangle)_{8\times8}$, $\mathcal{P}_{\bot}=:(\langle PN_{\alpha}, N_{\beta}\rangle)_{8\times8}$.
Here the second equality follows from the equality $Q_0Px=x$ and the property that $Q_i$ maps $E_{\pm}(Q_0P)$ to $E_{\mp}(Q_0P)$ $(i\geq 1)$.
On the other hand, from
\begin{eqnarray*}
I_{32}&=& P^2 \\
    &=&  \left(
  \begin{array}{cccc}
  \tilde{h}^2I_8+UU^t  &  0   &   0   &   \tilde{h}U+U\mathcal{P}_{\bot} \\
  0     &  (\mathcal{P}_{\bot})^2+U^tU  &  (\mathcal{P}_{\bot})U^t+\tilde{h}U^t   &  0\\
  0     &  U(\mathcal{P}_{\bot})+\tilde{h}U   &   UU^t+\tilde{h}^2I_8   &  0  \\
 \tilde{ h}U^t+(\mathcal{P}_{\bot})U^t   &  0   &  0   &  U^tU+(\mathcal{P}_{\bot})^2
  \end{array}\right),
\end{eqnarray*}
we derive that $UU^t=(1-\tilde{h}^2)I_8$, $(\mathcal{P}_{\bot})^2+U^tU=I_8$ and $(\mathcal{P}_{\bot}+\tilde{h}I_8)U^t=0$.
Since we need only to calculate $Hess_{\tilde{h}}$ on the regular level sets, i.e., on the points with $\tilde{h}^2<1$, we
are guaranteed that $\mathrm{rank} U=8$ and $\mathcal{P}_{\bot}=-\tilde{h}I_8$, which leaves us to
\begin{equation*}
  P=\left(
  \begin{array}{cccc}
  \tilde{h}I_8  &  0   &   0   &   U \\
  0     &  -\tilde{h}I_8  &  U^t   &  0\\
  0     &  U   &   \tilde{h}I_8   &  0  \\
  U^t   &  0   &  0   &  -\tilde{h}I_8
  \end{array}\right).
\end{equation*}
We decompose $U_{8\times 8}$ as
\begin{equation*}
  U_{8\times 8}=\binom{u}{(U_1)_{7\times 8}},
\end{equation*}
with $u=(\langle Px, Q_1PN_1\rangle,\cdots, \langle Px, Q_1PN_8\rangle)$.
Restricting on $T_xM_-^D$ with $\tilde{h}^2(x)< 1$, the matrix of the quadratic form associated with $P$
can be expressed as
\begin{equation*}
   \mathcal{P}_{\top}= \left(\begin{array}{ccc}
 \tilde{ h}I_8  &  0   &   0   \\
  0     &  -\tilde{h}I_8  &  U_1^t \\
  0     &  U_1   &   \tilde{h}I_7
  \end{array}\right).
\end{equation*}

Now we are going to calculate the eigenvalues of $\mathcal{P}_{\top}$ by computing
\begin{equation*}
  \mathrm{det}(\lambda I_{23}-\mathcal{P}_{\top})=
  \begin{vmatrix}
  (\lambda-\tilde{h})I_8 & 0 & 0 \\
  0   &  (\lambda+\tilde{h})I_8  &  -U_1^t\\
  0   &  -U_1  &  (\lambda-\tilde{h})I_7
  \end{vmatrix}.
\end{equation*}

Obviously, $\lambda=\tilde{h}$ is an eigenvalue of $\mathcal{P}_{\top}$ with multiplicity at least $8$.
When $\lambda\neq \tilde{h}$, we see
\begin{equation*}
\begin{vmatrix}
  (\lambda+\tilde{h})I_8  &  -U_1^t\\
   -U_1  &  (\lambda-\tilde{h})I_7
  \end{vmatrix}=
  \begin{vmatrix}
 (\lambda+\tilde{h})I_8-(\lambda-\tilde{h})^{-1}U_1^tU_1  &  0\\
   -U_1  &  (\lambda-\tilde{h})I_7
  \end{vmatrix}=
  (\lambda+\tilde{h})(\lambda^2-1)^7.
\end{equation*}
Thus, $\mathrm{det}(\lambda I_{23}-\mathcal{P}_{\top})=(\lambda-\tilde{h})^8(\lambda+\tilde{h})(\lambda+1)^7(\lambda-1)^7$ for any $\lambda$.
Namely, the eigenvalues of $\mathcal{P}_{\top}$ on any regular level set of $\tilde{h}$ are constant. Equivalently, for any $k$, $\mathrm{Trace}(\mathcal{P}_{\top})^k$
is constant on the regular level sets of $\tilde{h}$. Recalling (\ref{hessian}), we have proved that the eigenvalues of $Hess_{\tilde{h}}$ are constant.

Therefore, $M_+^I\hookrightarrow M_-^D$ is a totally isoparametric hypersurface.
At the mean time, we notice that $\frac{\nabla^D\tilde{h}}{|\nabla^D\tilde{h}|}$ is a unit normal of $M_+^I$ in $M_-^D$, while $M_+^I$ and $M_-^D$
are both submanifolds in $S^{31}(1)$. Therefore, the eigenvalues of the shape operator associated with
$\frac{\nabla^D\tilde{h}}{|\nabla^D\tilde{h}|}$ on $M_+^I$ are the same as those in $S^{31}(1)$, i.e., $1, -1$ with multiplicity $7$ and $0$ with multiplicity $8$,
which makes $M_+^I$ an austere hypersurface in $M_-^D$.
\end{proof}


\begin{ack}
The authors would like to thank Professors R. Miyaoka and Q. S. Chi for their interest and helpful comments.

\end{ack}

\end{document}